\def\bdrS{ \partial S}
\def\tM{\tilde M}
\def\Dt{\Delta}
\def\hS{\hat S}
\def\tu{\tilde u}
\def\zerS{Z}
\def\tanpr{T_{p_r}(\overline{U}_r)}
\def\cV{\mathcal V}
\def\tg{\tilde g}
\newcommand{\re}{\mathbb{R}}
\newcommand{\Z}{\mathbb{Z}}
\newcommand{\N}{\mathbb{N}}
\newcommand{\cv}[1]{\mbox{conv}(#1)}
\newcommand{\bdS}{\pt S}
\newcommand{\lmd}{\lambda}
\newcommand{\nn}{\nonumber}
\newcommand{\eps}{\epsilon}
\newcommand{\dt}{\delta}
\newcommand{\af}{\alpha}
\newcommand{\sig}{\sigma}
\newcommand{\supp}{\mbox{supp}}
\newcommand{\reff}[1]{(\ref{#1})}
\newcommand{\pt}{\partial}
\newcommand{\prm}{\prime}
\newcommand{\mc}[1]{\mathcal{#1}}
\newcommand{\bdes}{\begin{description}}
\newcommand{\edes}{\end{description}}
\newcommand{\bal}{\begin{align}}
\newcommand{\eal}{\end{align}}
\newcommand{\bnum}{\begin{enumerate}}
\newcommand{\enum}{\end{enumerate}}
\newcommand{\bit}{\begin{itemize}}
\newcommand{\eit}{\end{itemize}}
\newcommand{\bea}{\begin{eqnarray}}
\newcommand{\eea}{\end{eqnarray}}
\newcommand{\be}{\begin{equation}}
\newcommand{\ee}{\end{equation}}
\newcommand{\baray}{\begin{array}}
\newcommand{\earay}{\end{array}}
\newcommand{\bsry}{\begin{subarray}}
\newcommand{\esry}{\end{subarray}}
\newcommand{\bca}{\begin{cases}}
\newcommand{\eca}{\end{cases}}
\newcommand{\bcen}{\begin{center}}
\newcommand{\ecen}{\end{center}}
\newcommand{\bbm}{\begin{bmatrix}}
\newcommand{\ebm}{\end{bmatrix}}
\newcommand{\bmx}{\begin{matrix}}
\newcommand{\emx}{\end{matrix}}
\newcommand{\bpm}{\begin{pmatrix}}
\newcommand{\epm}{\end{pmatrix}}
\newcommand{\btab}{\begin{tabular}}
\newcommand{\etab}{\end{tabular}}
\theoremstyle{plain}
\newtheorem{thm}{Theorem}[section]
\newtheorem{theorem}[thm]{Theorem}
\newtheorem{pro}[thm]{Proposition}
\newtheorem{lemma}[thm]{Lemma}
\theoremstyle{definition}
\newtheorem{example}[thm]{Example}
\newtheorem{exm}[thm]{Example}
\renewcommand{\subsection}[1]{
    \stepcounter{subsection}
    \settowidth{\hangindent}{\bf\thesubsection.~}
    \hangafter=1
    \bigskip\noindent
    {\bf\hbox{\thesubsection.~}#1}\par
    \nobreak
    \medskip
}
\def\bbR{\mathbb R}
\def\tM{\tilde M}
\def\oU{\overline U}
\def\oM{\overline M}
\begin{document}

\title[Sufficient and Necessary Conditions for
Semidefinite Representability]
{Sufficient and Necessary Conditions for \\
 Semidefinite Representability of Convex Hulls and Sets}


\author[J.~William Helton]{J.~William Helton}
\address{
Department of Mathematics, University of California at San Diego,
9500 Gilman Drive, La Jolla, CA 92093}
\email{helton@math.ucsd.edu}

\author{Jiawang Nie}
\address{
Department of Mathematics,  University of California at San Diego,
9500 Gilman Drive, La Jolla, CA 92093 }
\email{njw@math.ucsd.edu}

\keywords{convex set, convex hull, irredundancy,
linear matrix inequality (LMI), nonsingularity,
positive curvature, semialgebraic set,
semidefinite (SDP) representation,
(strictly) quasi-concavity, singularity, smoothness,
sos-concavity, sum of squares (SOS)}

\begin{abstract}
A set $S\subseteq \re^n$ is called to be
{\it Semidefinite (SDP)} representable
if $S$ equals the projection of a set in higher dimensional space
which is describable by some Linear Matrix
Inequality (LMI).
Clearly, if $S$ is SDP representable,
then $S$ must be convex and semialgebraic
(it is describable by conjunctions and disjunctions of
polynomial equalities or inequalities).
This paper proves sufficient conditions
and necessary conditions for SDP representability
of convex sets and convex hulls
by proposing a new approach
to construct SDP representations.

The contributions of this paper are:
{\bf (i)} For bounded SDP representable sets $W_1,\cdots,W_m$,
we give an explicit construction of an SDP representation
for $\cv{\cup_{k=1}^mW_k}$.
This provides a technique for building global SDP representations
from the local ones.
{\bf (ii)}
For the SDP representability of a compact convex semialgebraic set $S$,
we prove sufficient:
the boundary $\bdS$ is
nonsingular
and positively curved,
while necessary is:
$\bdS$ has nonnegative curvature at
each  nonsingular point.
In terms of defining polynomials  for $S$,
nonsingular boundary amounts to them
having nonvanishing gradient at each point on $\bdS$
and the curvature condition can be expressed as
their strict versus nonstrict quasi-concavity of
at those points on $\bdS$ where they vanish.
The gaps between them
are $\bdS$ having or not having singular points
either of the gradient or  of the curvature's
positivity.
A sufficient condition bypassing the gaps is when some
defining polynomials of $S$ satisfy an algebraic condition called
sos-concavity.
{\bf (iii)}
For the SDP representability of the
convex hull of a compact nonconvex semialgebraic set $T$,
we find that the critical object is $\pt_cT$,
the maximum subset of $\pt T$ contained in $\pt \cv{T}$.
We prove sufficient  for SDP representability:
$\pt_cT$ is
nonsingular and
positively curved,
and necessary is:
$\pt_cT$ has nonnegative curvature at
nonsingular points.
The gaps between our sufficient and necessary conditions
are similar to case (ii).
The positive definite Lagrange Hessian (PDLH)
condition,
which meshes well with constructions,
is also discussed.
\end{abstract}

\maketitle

\section{Introduction}

Semidefinite programming (SDP) \cite{BTN, NN94, N06, SDPhandbook}
is one of the main advances in convex optimization theory and applications.
It has a profound effect on combinatorial optimization, control theory
and nonconvex optimization as well as many other disciplines.
There are effective numerical algorithms
for solving problems presented in terms of Linear Matrix
Inequalities (LMIs).
One fundamental problem in semidefinite programming
and linear matrix inequality theory is
what sets can be presented in semidefinite programming.
This paper addresses one of the most classical aspects of this problem.

A set $S$ is said to have an {\it LMI representation} or be {\it
LMI representable} if
\be \nn
S=\{x\in \re^n: A_0+A_1 x_1 + \cdots + A_n x_n \succeq 0\}
\ee
for some symmetric matrices $A_i$. Here the notation $X
\succeq 0 \,(\succ 0)$ means the matrix $X$ is positive
semidefinite (definite). If $S$ has an interior point, $A_0$ can
be assumed to be positive definite without loss of generality.
Obvious necessary conditions for $S$ to be LMI representable are
that $S$ must be convex and
{\it basic closed semialgebraic}, i.e.,
\begin{align*}
S &=\{x\in \re^n:\, g_1(x)\geq 0, \cdots, g_m(x)\geq 0\}
\end{align*}
where $g_i(x)$ are multivariate polynomials.
It is known that not every convex
basic closed semialgebraic set
can be represented by LMI
(e.g., the set $\{x\in\re^2: x_1^4+x_2^4\leq 1\}$
is not LMI representable \cite{HV}).
If the convex set $S$ can be represented as
the projection to $\re^n$ of
\be
\label{eq:liftLMI}
\hS = \left\{ (x, u) \in \re^{(n+N)}: A_0 +
\sum_{i=1}^n A_i x_i + \sum_{j=1}^N B_j u_j \succeq 0 \right\}
\subset \re^{(n+N)},
\ee
that is
$S = \left\{ x \in \re^{n}: \exists u \in \re^n, \
(x,u) \in \hS \right\}
$,
for some symmetric matrices $A_i$ and $B_j$,
then $S$ is called {\it semidefinite
representable} or {\it SDP representable}.
Sometimes we refer to a
semidefinite representation as a {\it lifted LMI representation}
of the convex set $S$ and to the LMI in \reff{eq:liftLMI} as a
{\it  lifted LMI for $S$}, and to $\hS$
as the {\it SDP lift of $S$}.

\medskip

If $S$ has an SDP representation instead of LMI representation,
then $S$ might not be basic closed semialgebraic,
but it must be semialgebraic, i.e.,
$S$ is describable by conjunctions or disjunctions
of polynomial equalities or inequalities \cite{BCR}.
Furthermore,
the interior $\overset{\circ}{S}$ of $S$
is a union of {\it basic open semialgebraic sets}
(Theorem~2.7.2 in \cite{BCR}), i.e.,
$ \overset{\circ}{S} = \bigcup_{k=1}^m T_k$ for sets of the form
\[
T_k = \{ x\in \re^n:\, g_{j_1}(x) > 0,\cdots, g_{j_{m_{k}}}(x) > 0 \}.
\]
Here $g_{i_j}$ are all multivariate polynomials.
For instance, the set
\[
\left\{x\in\re^2:\, \exists \, u\geq 0, \,
\bbm x_2 & x_1-u \\ x_1-u & 1 \ebm \succeq 0 \right\}
\]
is not a basic semialgebraic set.
When $S$ is SDP representable, $S$ might not be closed,
but its closure $\bar{S}$ is a union of basic closed semialgebraic sets
(Proposition~2.2.2 and Theorem~2.7.2 in \cite{BCR}).
For example, the set
\[
\left\{x\in\re:\, \exists \, u, \,  \bbm x & 1 \\ 1 & u \ebm \succeq 0\right\}
= \{x\in\re:\, x> 0\}
\]
is not closed, but its closure is a basic closed semialgebraic set.
The content of this paper is to give  sufficient conditions
and (nearby) necessary conditions
for SDP representability of convex semialgebraic sets
or convex hulls of nonconvex semialgebraic sets.

\medskip

\noindent
{\bf History} \ \
Nesterov and Nemirovski
(\cite{NN94}), Ben-Tal and Nemirovski (\cite{BTN}), and Nemirovsky (\cite{N06})
gave collections of examples of SDP representable sets.
Thereby leading to the fundamental question which sets are SDP representable?
In \S 4.3.1 of his excellent ICM 2006 survey \cite{N06} Nemirovsky wrote ``
this question seems to be completely open". Obviously, to be SDP
representable, $S$ must be convex and semialgebraic.
What are the sufficient conditions that $S$ is SDP representable?
This is the main subject of this paper.

When $S$ is a basic closed semialgebraic set of the form
$\{x\in \re^n:\, g_1(x)\geq 0, \cdots, g_m(x)\geq 0\}$,
there is recent work on the SDP representability of $S$
and its convex hull.
Parrilo \cite{Par06} and Lasserre \cite{Las06} independently
proposed a natural
construction of lifted LMIs using moments and sum of squares techniques
with the aim  of producing  SDP representations.
Parrilo \cite{Par06} proved the construction gives
an SDP representation in the two dimensional case when the boundary
of $S$ is a single rational planar curve of genus zero.
Lasserre \cite{Las06} showed the construction can give arbitrarily accurate
approximations to compact $S$,
and the constructed LMI is a lifted LMI for $S$
by assuming {\it almost all} positive affine functions on $S$
have SOS representations with uniformly bounded degree.
Helton and Nie \cite{HN1} proved that this type of construction
for compact convex sets $S$ gives the exact SDP representation
under various hypotheses
on the Hessians of the defining polynomials $g_i(x)$,
and also gave other sufficient conditions for $S$ to be SDP representable.
Precise statements of most of the main theorems
in \cite{HN1} can be seen here in this paper in later sections
where they are used in our proofs,
see Theorems \ref{thm:qscv},
\ref{thm:setSimple} and
\ref{thm:posCurv}.

\medskip

\noindent
{\bf Contributions} \,\,
In this paper, we prove sufficient and
(nearby)
 necessary conditions
for the SDP representability of convex sets and
convex hulls of nonconvex sets.
To obtain these conditions
 we give a new and different construction of SDP representations,
which we combine with
those discussed in \cite{HN1,Las06,Par06}.
The following are our main contributions.

\medskip

{\it First}, consider the SDP representability
of the convex hull of union of sets $W_1,\cdots,W_m$
which are all SDP representable.
When every $W_k$ is bounded,
we give an explicit SDP representation
of $\cv{\cup_{k=1}^m W_k}$.
When some $W_k$ is unbounded,
we show that the closure of
the projection of the constructed SDP lift
is exactly the closure of $\cv{\cup_{k=1}^m W_k}$.
This is Theorem \ref{thm:mUsdp}.
It provides a new approach for constructing global SDP representations
from local ones,
and plays a key
role in proving our main theorems in Sections
\S \ref{sec:cvbsa} and \S \ref{sec:ncvx}.

\medskip

{\it Second}, consider the SDP representability of
a compact convex semialgebraic  set $S=\cup_{k=1}^m T_k$.
Here
$T_k=\{ x\in \re^n:\, g^k_1(x)\geq 0,\, \cdots,\, g^k_m(x)\geq 0 \}$
are defined by polynomials $g^k_i$;
note  each $T_k$ here is not necessarily convex.
Denote by $Z(g)$ the zero set of a polynomial $g$.
Our main result for everywhere nonsingular boundary $\pt S$ is approximately:

\smallskip
\noindent
{\it Assume each $T_k$ has interior near $\pt T_k \cap \bdS$
and its boundary is nonsingular
(the defining polynomials  $g^k_i$
at every point $u\in \pt S \cap Z(g^k_i)$
satisfy $\nabla g^k_i(u) \not =0$).
Then sufficient for
$S$ to be SDP representable is:
every $\bdS\cap Z(g^k_i)$
has positive curvature
(i.e. $g^k_i$ is strictly quasi-concave on $\pt S \cap Z(g^k_i)$),
and necessary is: every irredundant $Z(g^k_i)$
has nonnegative curvature on $\bdS$
(i.e. $g^k_i$ is quasi-concave at $u$
whenever $Z(g^k_i)$ is irredundant at $u \in \pt S \cap Z(g^k_i)$).
}

\smallskip
\noindent
The notion of positive curvature we use is the standard one
of differential geometry, the notion of
quasi-concave function is the usual one and all of this
will be defined formally in \S 3.
To have
necessary conditions on a family $F$ of  defining functions for  $S$
we need an assumption that $F$ contains no functions irrelevant to
the defining of $S$.
Our notion of irredundancy plays a refinement
of this role.
The gaps
between our sufficient and necessary conditions
are $\bdS$ having positive versus nonnegative curvature
and singular  versus nonsingular points.
A case bypassing the gaps is that $g^k_i$ is sos-concave, i.e.,
$-\nabla^2 g^k_i(x)= W(x)^TW(x)$ for some possibly
nonsquare matrix polynomial $W(x)$.
Also when $\pt S$ contains singular points
$u$ we have  additional  conditions
which are sufficient:
for example, adding $-\nabla^2 g^k_i(u) \succ 0$
where $ \nabla g^k_i(u)  = 0$
to the hypotheses of the statement above
guarantees SDP representability.
We emphasize that our
conditions here concern only the
quasi-concavity properties of defining polynomials $g^k_i$
on the boundary $\bdS$ instead of on the whole set $S$.
See Theorems~\ref{thm:bcsa}, \ref{thm:cvnbsa},  \ref{thm:neccbsa}
and \ref{thm:intrpoly}
 for details.

\medskip

{\it Third}, consider the SDP representability
of the convex hull of a  compact nonconvex set $T=\cup_{k=1}^m T_k$.
Here $T_k = \{ x\in \re^n:\, f^k_1(x) \geq 0,\,
\cdots,\, f^k_{m_k}(x) \geq 0\}$ are defined by polynomials $f^k_i(x)$.
To obtain sufficient and necessary conditions,
we find that the critical object is the {\it convex boundary} $\pt_cT$,
the maximum subset of $\pt T$ contained in $\pt \cv{T}$.
Our main result for $\pt_c T$ having
everywhere nonsingular boundary is approximately:

\smallskip

\noindent
{\it
Assume each $T_k$ has nonempty interior near $\pt_cT$
and the defining polynomials $f^k_i$
are nonsingular at every point $u\in \pt_cT \cap Z(f^k_i)$
(i.e. $\nabla f^k_i(u) \not =0$).
Then sufficient for
$\cv{T}$ to be SDP representable is:
every $\pt_cT \cap Z(f^k_i)$
has positive curvature
(i.e. $f^k_i$ is strictly quasi-concave on $\pt_cT \cap Z(f^k_i)$),
and necessary is: every irredundant $Z(f^k_i)$
has nonnegative curvature on $\bdS$
(i.e. $f^k_i$ is quasi-concave at $u$
whenever $f^k_i$ is irredundant at $u \in \pt_cT \cap Z(f^k_i)$).
}

\smallskip
\noindent
This generalizes our second  result (above) concerning
SDP representability of compact convex semialgebraic sets.
Also (just as before) we successfully weaken the hypothesis
in several directions, which covers various cases of singularity.
For example,
one other sufficient condition allows $f^k_i$ to be sos-concave.
When $T_k$ has empty interior,
we prove that a  condition called the
{\it positive definite Lagrange Hessian (PDLH) condition}
is sufficient.
See Theorems~\ref{thm:cvhbcsasdp}, \ref{thm:nonbcsa}, \ref{thm:ncvxnecc},
\ref{thm:pdlh} and \ref{thm:pdlhU}
for details.

\medskip

Let us comment on the constructions of lifted LMIs.
In this paper we analyze two different types of constructions.
One is a fundamental moment type relaxation due to Lasserre-Parrilo
which builds LMIs (discussed in \S \ref{sec:pdlh}),
while the other is a localization technique introduced in this paper.
The second result stated above is proved in two different ways,
one of which gives a refined result:\\
\smallskip
{\it
Given a basic closed semialgebraic set
$S=\ closure \; of \;
\{ x\in \re^n:\, g_1(x)> 0,\, \cdots,\, g_m(x) > 0 \}$
with nonempty interior.
If $S$ is convex and its
 boundary $\pt S$ is positively curved and nonsingular,
 then there exists a certain set of defining polynomials for $S$
for which a Lasserre-Parrilo type moment relaxation
gives the lifted LMI for $S$. \\
}
\smallskip
See \S \ref{sec:ghomi} for the proof.
A very different construction of lifted LMI
is also given in \S \ref{sec:ncvx}
using the localization technique plus a Lasserre-Parrilo type moment
construction.

%
%
%
%

\medskip

\noindent
{\bf Notations and Outline}
The following notations will be used.
A polynomial $p(x)$ is said to be a {\it sum of squares (SOS)} if
$p(x) = w(x)^Tw(x)$ for some column vector polynomial $w(x)$.
A matrix polynomial $H(x)$ is said to be SOS if
$H(x) = W(x)^TW(x)$ for some possibly nonsquare matrix polynomial $W(x)$.
$\N$ denotes the set of nonnegative integers,
$\re^n$ denotes the Euclidean space of $n$-dimensional space
of real numbers,
$\re_+^n$ denotes the nonnegative orthant of $\re^n$.
$\Dt_m=\{\lmd\in\re_+^m: \lmd_1+\cdots+\lmd_m=1\}$
is the standard simplex.
For $x\in\re^n$, $\|x\|=\sqrt{\sum_{i=1}^n x_i^2}$.
$B(u,r)$
denotes the open ball
$\{x\in\re^n:\, \|x-u\| < r\}$
and $\bar{B}(u,r)$ denotes the closed ball
$\{x\in\re^n:\, \|x-u\| \leq r\}$.
For a given set $W$, $\overline{W}$ denotes the closure of $W$,
and $\pt W$ denotes its topological boundary.
For a given matrix $A$, $A^T$ denotes its transpose.
$I_n$ denotes the $n\times n$ identity matrix.

\medskip
The paper is organized as follows.
Section~\ref{sec:sdpunion} discusses the SDP representation of the convex hull of
union of SDP representable sets.
Section~\ref{sec:cvbsa} discusses the SDP representability
of convex semialgebraic sets.
Section~\ref{sec:ncvx} discusses the SDP representability
of convex hulls of nonconvex semialgebraic sets.
Section~\ref{sec:ghomi} 
presents a similar version of Theorem \ref{thm:bcsa}
and gives a different but more geometric proof
based on results of \cite{HN1}.
Section~\ref{sec:concl} concludes this paper
and makes a conjecture.
%

\bigskip
\section{The convex hull of union of SDP representable sets}
\setcounter{equation}{0}
\label{sec:sdpunion}

It is obvious the intersection of SDP representable
sets is also SDP representable,
but the union might not be because
the union may not be convex.
However, the convex hull of the union of
SDP representable sets is a convex semialgebraic set.
Is it also SDP representable?
This section will address this issue.

Let $W_1,\cdots, W_m \subset \re^n$ be convex sets.
Then their Minkowski sum
\[
W_1+\cdots+W_m =
\left\{x = x_1+\cdots + x_m: \, x_1 \in W_1, \cdots, x_m \in W_m \right\}
\]
is also a convex set.
If every $W_k$ is given by some lifted LMI,
then a lifted LMI for $W_1+\cdots+W_m$
can also be obtained immediately by definition.
Usually the union of convex sets $W_1,\cdots, W_m$ is no longer convex,
but its convex hull $\mbox{conv}(\cup_{k=1}^m W_k)$ is convex again.
Is $\mbox{conv}(\cup_{k=1}^m W_k)$ SDP representable if
every $W_k$ is?
We give a  lemma first.
\begin{lemma}
\label{lem:cvhdef}
If $W_k$ are all nonempty convex sets, then
\begin{align*}
\mbox{conv}(\bigcup_{k=1}^m W_k) =
\bigcup_{ \lmd \in \Dt_m }
\left(\lmd_1 W_1 + \cdots + \lmd_m W_m \right)
\end{align*}
where $\Dt_m = \{ \lmd\in \re_+^m:\, \lmd_1+\cdots+\lmd_m =1 \}$
is the standard simplex.
\end{lemma}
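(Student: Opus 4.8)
The plan is to prove the two set inclusions separately, with the easy direction being that the right-hand union is contained in $\mbox{conv}(\bigcup_k W_k)$. For this, fix $\lmd \in \Dt_m$ and a point $x = \lmd_1 x_1 + \cdots + \lmd_m x_m$ with each $x_k \in W_k$. Since each $x_k \in W_k \subseteq \bigcup_j W_j$ and $\lmd$ lies in the simplex, $x$ is by definition a convex combination of points of $\bigcup_j W_j$, hence $x \in \mbox{conv}(\bigcup_j W_j)$. This direction uses nothing beyond the definition of convex hull and does not even need convexity of the individual $W_k$.

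For the reverse inclusion, I would start from an arbitrary point $x \in \mbox{conv}(\bigcup_k W_k)$. By Carath\'eodory's theorem (or just the definition of convex hull as all finite convex combinations), $x = \sum_{j=1}^N \mu_j y_j$ where $\mu_j > 0$, $\sum_j \mu_j = 1$, and each $y_j$ belongs to some $W_{k(j)}$. Group the terms according to which $W_k$ they come from: for each $k \in \{1,\dots,m\}$ let $I_k = \{ j : k(j) = k \}$ and set $\lmd_k = \sum_{j \in I_k} \mu_j$. Then $\lmd = (\lmd_1,\dots,\lmd_m) \in \Dt_m$. For the indices $k$ with $\lmd_k > 0$, the point $z_k := \frac{1}{\lmd_k} \sum_{j \in I_k} \mu_j y_j$ is a convex combination of points of $W_k$, so by convexity of $W_k$ we get $z_k \in W_k$, and $\lmd_k z_k = \sum_{j\in I_k}\mu_j y_j$. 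For indices $k$ with $\lmd_k = 0$, I would use the nonemptiness hypothesis: pick any $z_k \in W_k$, so that $\lmd_k z_k = 0$ trivially. Summing, $x = \sum_{k=1}^m \lmd_k z_k \in \lmd_1 W_1 + \cdots + \lmd_m W_m$, which lies in the right-hand union.

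The only subtlety worth flagging is the bookkeeping around the degenerate weights: the expression $\lmd_1 W_1 + \cdots + \lmd_m W_m$ must be interpreted so that a zero coefficient times a set contributes $\{0\}$ (not the empty set), and this is exactly why the hypothesis that every $W_k$ is nonempty is needed — it guarantees $0 \in \lmd_k W_k$ when $\lmd_k = 0$ via $0 = 0 \cdot z_k$ for some $z_k \in W_k$. So there is no real obstacle here; the proof is a routine regrouping argument, and the main thing to be careful about is stating the convention for scaling a set by $0$ and invoking convexity of each $W_k$ at the right moment.
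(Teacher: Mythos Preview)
Your proof is correct and self-contained. The paper, however, does not argue the lemma directly: it simply observes that it is a special case of Theorem~3.3 in Rockafellar's \emph{Convex Analysis} and gives no further details. Your regrouping argument is the standard way this fact is established (and is essentially what underlies Rockafellar's result in this finite setting); the advantage of your approach is that it is elementary and explicit, while the paper's citation keeps the exposition short. Your remark about the convention $0\cdot W_k=\{0\}$ and the role of nonemptiness is exactly the point one must be careful about, and you handle it properly.
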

\begin{proof}
This is a special case of Theorem~3.3 in Rockafellar \cite{Rockf}.
\end{proof}

%
%
%

\medskip

Based on Lemma~\ref{lem:cvhdef},
given SDP representable sets $W_1,\cdots, W_m$,
it is possible to obtain a SDP representation for
the convex hull $\mbox{conv}(\cup_{k=1}^m W_k)$
directly from the lifted LMIs of all $W_k$
under rather weak conditions.
This is summarized in the following theorem.

\begin{theorem} \label{thm:mUsdp}
Let $W_1,\cdots, W_m$ be nonempty convex sets given by SDP representations
\begin{align*}
W_k  = \left\{ x \in \re^n:\,\exists\,\, u^{(k)}, \,\,
A^{(k)} + \sum_{i=1}^n x_i B_i^{(k)}
 + \sum_{j=1}^{N_k} u_j^{(k)} C_j^{(k)} \succeq 0\right\}
\end{align*}
for some symmetric matrices $A^{(k)},B_i^{(k)}, C_j^{(k)}$.
Define a new set
\begin{align} \label{eq:cvhlmi}
\mc{C} = \left\{ \sum_{k=1}^m x^{(k)}: \exists\, \lmd\in\Dt_m,\,\exists\,u^{(k)},\,
 \lmd_k A^{(k)} + \sum_{i=1}^n x_i^{(k)} B_i^{(k)}
 + \sum_{j=1}^{N_k} u_j^{(k)} C_j^{(k)} \succeq 0,\, 1\leq k\leq m \right\}.
\end{align}
Then we have the inclusion
\begin{align} \label{eq:u<=}
\mbox{conv}( \bigcup_{k=1}^m  W_k )\subseteq  \mc{C}
\end{align}
and the equality
\begin{align} \label{eq:cl=}
\overline{\mc{C}} =
\overline{\mbox{conv}( \bigcup_{k=1}^m  W_k )}.
\end{align}
In addition, if every $W_k$ is bounded, then
\begin{align} \label{eq:u=}
\mc{C} = \mbox{conv}( \bigcup_{k=1}^m  W_k ).
\end{align}
\end{theorem}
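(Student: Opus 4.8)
The plan is to use Lemma~\ref{lem:cvhdef} to rewrite the convex hull as $\bigcup_{\lmd\in\Dt_m}(\lmd_1 W_1+\cdots+\lmd_m W_m)$, and then to recognize that the set $\mc C$ defined in \reff{eq:cvhlmi} is precisely a ``homogenized'' version of this union in which each $W_k$ has been replaced by the closure of $\{\lmd_k x^{(k)}:\lmd_k\geq 0,\ x^{(k)}\in W_k\}$, i.e.\ the cone generated by $W_k\times\{1\}$, sliced at height $\lmd_k$. First I would verify the inclusion \reff{eq:u<=}: given a point $x\in\cv{\bigcup_k W_k}$, Lemma~\ref{lem:cvhdef} provides $\lmd\in\Dt_m$ and points $y^{(k)}\in W_k$ with $x=\sum_k\lmd_k y^{(k)}$; taking $x^{(k)}=\lmd_k y^{(k)}$ and scaling the defining LMI of $W_k$ (which holds with some lift $v^{(k)}$) by $\lmd_k\geq 0$ gives $\lmd_k A^{(k)}+\sum_i x_i^{(k)}B_i^{(k)}+\sum_j u_j^{(k)}C_j^{(k)}\succeq 0$ with $u^{(k)}=\lmd_k v^{(k)}$. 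This handles \reff{eq:u<=} even when some $W_k$ is unbounded, because the scaling is by a nonnegative scalar and needs no closure.

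Next, for the equality \reff{eq:u=} under the boundedness hypothesis, I would prove the reverse inclusion $\mc C\subseteq\cv{\bigcup_k W_k}$. Take any point $\sum_k x^{(k)}\in\mc C$, with witnesses $\lmd\in\Dt_m$ and $u^{(k)}$. For each $k$ with $\lmd_k>0$, dividing the $k$-th LMI by $\lmd_k$ shows that $x^{(k)}/\lmd_k\in W_k$, so $x^{(k)}=\lmd_k(x^{(k)}/\lmd_k)$ is a point of $\lmd_k W_k$. The delicate indices are those with $\lmd_k=0$: there the LMI reads $\sum_i x_i^{(k)}B_i^{(k)}+\sum_j u_j^{(k)}C_j^{(k)}\succeq 0$, which says $x^{(k)}$ lies in the recession cone of $W_k$ (the directions along which $W_k$ is unbounded). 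Here I would invoke boundedness of $W_k$: a bounded convex set has trivial recession cone, so the homogeneous LMI forces $x^{(k)}=0$. Consequently $\sum_k x^{(k)}=\sum_{\lmd_k>0}\lmd_k(x^{(k)}/\lmd_k)$ is a convex combination of points of the $W_k$, hence lies in $\cv{\bigcup_k W_k}$, giving \reff{eq:u=}.

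For the closure equality \reff{eq:cl=} in the general (possibly unbounded) case, one inclusion $\overline{\cv{\bigcup_k W_k}}\subseteq\overline{\mc C}$ is immediate from \reff{eq:u<=}. For the other, I would use a perturbation argument: given $\sum_k x^{(k)}\in\mc C$ with witness $\lmd\in\Dt_m$, approximate $\lmd$ by $\lmd^{(\eps)}\in\Dt_m$ with all coordinates strictly positive (say $\lmd^{(\eps)}_k=(1-\eps)\lmd_k+\eps/m$), and correspondingly perturb each $x^{(k)}$ and $u^{(k)}$; since each perturbed index now has positive weight, the argument of the previous paragraph shows the perturbed sum lies in $\cv{\bigcup_k W_k}$, and letting $\eps\to 0$ places $\sum_k x^{(k)}$ in the closure. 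The one technical point requiring care is that perturbing $\lmd_k$ from $0$ to a small positive value, while keeping the LMI feasible, may require also scaling $x^{(k)}$ toward a genuine point of $W_k$ (pick any fixed $w^{(k)}\in W_k$ and move $x^{(k)}$ slightly toward $\lmd^{(\eps)}_k w^{(k)}$); the convexity of the semidefinite cone guarantees the perturbed matrix stays PSD.

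The main obstacle I anticipate is the bookkeeping around the weight-zero indices: the clean statement ``$\lmd_k=0$ forces $x^{(k)}$ into the recession cone'' must be extracted carefully from the LMI, and the passage to the closure in \reff{eq:cl=} needs the perturbation to simultaneously fix \emph{all} zero weights while preserving feasibility of \emph{every} block LMI at once. Everything else is routine scaling and an appeal to Lemma~\ref{lem:cvhdef}.
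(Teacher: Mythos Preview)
Your proposal is correct and follows essentially the same approach as the paper: Lemma~\ref{lem:cvhdef} plus scaling for \reff{eq:u<=}; the ray/recession-cone argument (pick $y^{(k)}\in W_k$, add $\alpha(x^{(k)},u^{(k)})$ to its lift, project, and use boundedness to force $x^{(k)}=0$) for \reff{eq:u=}; and a perturbation of the zero weights toward fixed points of the $W_k$ for \reff{eq:cl=}. The paper writes out explicit formulas for the perturbed weights and points (splitting into the zero and nonzero index sets and renormalizing), but your scheme $\lambda_k^{(\eps)}=(1-\eps)\lambda_k+\eps/m$ together with $(x^{(k)})'=(1-\eps)x^{(k)}+(\eps/m)w^{(k)}$ works for the same reason and is arguably tidier.
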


\noindent
{\it Remark:}
When some $W_k$ is unbounded,
$\mc{C}$ and
$\cv{\cup_{k=1}^m  W_k}$ might not be equal,
but they have the same interior,
which is good enough for solving optimization problems
over $\cv{\cup_{k=1}^m  W_k}$.

\begin{proof}
First, by definition of $\mc{C}$,
\reff{eq:u<=} is implied immediately
by Lemma~ \ref{lem:cvhdef}.

\medskip

Second, we prove \reff{eq:cl=}.
By \reff{eq:u<=}, it is sufficient to prove
\[
\mc{C} \subseteq \overline{ \cv{ \bigcup_{k=1}^m W_k } }.
\]
Let $x = x^{(1)}+\cdots+x^{(m)}\in\mc{C}$;
then there exist $\lmd\in \Dt_m$ and $u^{(k)}$ such that
\be
\label{eq:xinSumW}
\lmd_k A^{(k)} + \sum_{i=1}^n x_i^{(k)} B_i^{(k)}
+ \sum_{j=1}^{N_k} u_j^{(k)} C_j^{(k)} \succeq 0,\,\,\,\, 1\leq k\leq m.
\ee
Without loss of generality, assume
$\lmd_1=\cdots=\lmd_\ell=0$
and $\lmd_{\ell+1},\cdots,\lmd_m>0$.
Then for $k=\ell+1,\cdots,m$, we have $\frac{1}{\lmd_k}x^{(k)}\in W_k$ and
\[
x^{(\ell+1)}+\cdots+x^{(m)} =
\lmd_{\ell+1} \frac{1}{\lmd_{\ell+1}}x^{(\ell+1)} + \cdots +
\lmd_{m} \frac{1}{\lmd_{m}}x^{(m)} \in \cv{ \bigcup_{k=1}^m W_k }.
\]
Since $W_k \ne \emptyset$,
there exist $ y^{(k)} \in W_k$ and $v^{(k)}$ such that
\[
A^{(k)} + \sum_{i=1}^n y_i^{(k)} B_i^{(k)}
 + \sum_{j=1}^{N_k} v_j^{(k)} C_j^{(k)} \succeq 0.
\]
For this and \reff{eq:xinSumW},
for arbitrary $\eps >0$ small enough, we have
\begin{smaller}
\begin{smaller}
\begin{align} \label{eq:eps}
\eps A^{(k)} + \sum_{i=1}^n (x_i^{(k)}+\eps y_i^{(k)}) B_i^{(k)}
 + \sum_{j=1}^{N_k} (u_j^{(k)}+\eps v_j^{(k)}) C_j^{(k)} \succeq 0,
 \ \  \ when \ 1 \leq k \leq \ell
\end{align}
\begin{align} \label{eq:1-elleps}
\frac{1-\ell\eps}{1+(m-\ell)\eps} \left\{
(\lmd_k+\eps) A^{(k)} + \sum_{i=1}^n (x_i^{(k)}+\eps y_i^{(k)}) B_i^{(k)}
 + \sum_{j=1}^{N_k} (u_j^{(k)}+\eps v_j^{(k)}) C_j^{(k)} \right\}
 \succeq 0, \ \ \ \  \ell+1 \leq k \leq m.
\end{align}
\end{smaller}
\end{smaller}
Now we let
\[
x^{(k)}(\eps):= x^{(k)}+\eps y^{(k)} \ ( 1\leq k\leq \ell), \qquad
x^{(k)}(\eps):= \frac{1-\ell\eps}{1+(m-\ell)\eps}
( x^{(k)}+\eps y^{(k)} )\ (\ell+1\leq k \leq m),
\]
\[
\lmd_k(\eps) = \eps \,(1\leq k\leq \ell), \qquad
\lmd_k(\eps) = \frac{1-\ell\eps}{1+(m-\ell)\eps} (\lmd_k+\eps) \
(\ell+1\leq k\leq m).
\]
In
this notation  (\ref{eq:eps})(\ref{eq:1-elleps}) become
\be
\label{eq:fullelleps}
\lmd_k(\eps) A^{(k)} +
\sum_{i=1}^n
\ x_i^{(k)}(\eps)   B_i^{(k)}
 + \sum_{j=1}^{N_k} \tu_j^{(k)}(\eps)
   C_j^{(k)}
 \succeq 0,\ \ \  \ 1 \leq k \leq m
\ee
with $ \tu_j^{(k)}(\eps) := (u_j^{(k)}+\eps v_j^{(k)})$.
Obviously $\lmd(\eps) \in \Dt_m$ and $0<\lmd_k(\eps)<1$
for every $1\leq k\leq m$. From
LMI  \reff{eq:fullelleps} and from
$\lmd_k(\eps)>0$ we get $\frac{1}{\lmd_{k}(\eps)}x^{(k)}(\eps) \in W_k$
for all $k$.
%
Let $x(\eps):=x^{(1)}(\eps)+\cdots+x^{(m)}(\eps)$;
then we have
\[
x(\eps)=
\lmd_{1}(\eps) \frac{1}{\lmd_{1}(\eps)}x^{(1)}(\eps) + \cdots +
\lmd_{m}(\eps) \frac{1}{\lmd_{m}(\eps)}x^{(m)}(\eps)
\in \cv{ \bigcup_{k=1}^m W_k }.
\]
As $\eps \to 0$, $x(\eps) \to x$, which implies
$
x \in \overline{ \cv{ \bigcup_{k=1}^m W_k }}
$.

\medskip
Third, we prove \reff{eq:u=}.
When every $W_k$ is bounded,
it suffices to show
$ \mc{C}  \subseteq \cv{ \cup_{k=1}^m W_k}$.
Suppose $x=x^{(1)}+\cdots+x^{(m)}\in \mc{C}$
with some $\lmd \in [0,1]$ and $u^{(k)}$.
Without loss of generality, assume
$\lmd_1=\cdots=\lmd_\ell=0$
and $\lmd_{\ell+1},\cdots,\lmd_m>0$.
Obviously, for every $k=\ell+1,\cdots,m$,
we have $\frac{1}{\lmd_k}x^{(k)}\in W_k$ and
\[
x^{(\ell+1)}+\cdots+x^{(m)} =
\lmd_{\ell+1} \frac{1}{\lmd_{\ell+1}}x^{(\ell+1)} + \cdots +
\lmd_{m} \frac{1}{\lmd_{m}}x^{(m)} \in \cv{ \bigcup_{k=1}^m W_k }.
\]
Since $W_k \ne \emptyset$,
there exist $ y^{(k)} $ and $v^{(k)}$ such that
\[
A^{(k)} + \sum_{i=1}^n y_i^{(k)} B_i^{(k)}
 + \sum_{j=1}^{N_k} v_j^{(k)} C_j^{(k)} \succeq 0.
\]
Combining
the above with \reff{eq:xinSumW} and
observing $\lmd_1=\cdots=\lmd_\ell=0$,
we obtain that
\[
A^{(k)} + \sum_{i=1}^n (y_i^{(k)}+\af x_i^{(k)}) B_i^{(k)}
+ \sum_{j=1}^{N_k} (v_j^{(k)}+\af u_j^{(k)}) C_j^{(k)}
\succeq 0,\ \  \ \forall \af >0, \ \, \forall \,\, \ 1\leq k\leq \ell.
\]
Hence, we must have $x^{(k)}=0$ for $k=1,\cdots,\ell$, because otherwise
\[
y^{(k)} + [0,\infty) x^{(k)}
\]
is an unbounded ray in $W_k$,
which contradicts the boundedness of $W_k$. Thus
\[
x = x^{(\ell+1)}+\cdots+x^{(m)} \in  \cv{ \bigcup_{k=1}^m W_k }
\]
which completes the proof.
\end{proof}

\begin{example}
When some $W_k$ is unbounded,
$\mc{C}$ and $\cv{\cup_{k=1}^m W_k}$ might not be equal,
and $\mc{C}$ might not be closed.
Let us see some examples.\\
(i) Consider
$
W_1 = \left\{x\in \re^2:\, \bbm x_1 & 1 \\ 1 & x_2 \ebm \succeq 0\right\},
W_2 = \{0\}.
$
The convex hull $\mbox{conv}(W_1 \cup W_2) =
\{x\in \re_+^2:\, x_1+x_2=0 \text{ or } x_1x_2 >0 \}$.
However,
\[
\mc{C} = \left\{x\in \re^2:\,\exists\,\ \ 0\leq \lmd_1 \leq 1,\,
\bbm x_1 & \lmd_1 \\ \lmd_1 & x_2 \ebm \succeq 0\right\} = \re_+^2.
\]
$\mc{C}$ and $\cv{W_1\cup W_2}$ are not equal.\\
(ii) Consider
$
W_1 = \left\{x\in \re^2:\, \bbm x_1 & 1+x_2 \\ 1+x_2 & 1+u \ebm \succeq 0\right\}
$
and $W_2=\{0\}$. We have
$\mbox{conv}(W_1 \cup W_2) =
\{x\in \re^2:\, x_1 > 0, \text{ or } x_1=0 \text{ and } -1 \leq x_2 \leq 0 \}$
and $\overline{\mbox{conv}(W_1 \cup W_2)}= \{x\in \re^2:\, x_1 \geq 0 \}$.
But $\mc{C} = \mbox{conv}(W_1 \cup W_2)$ is not closed.
\end{example}

\medskip

\begin{example} Now we see some examples showing that the boundedness of
$W_1,\cdots,W_m$ is not necessary for \reff{eq:u=} to hold. \\
\noindent
(a) Consider the special case that
each $W_k$ is homogeneous, i.e.,
i.e., $A^{(k)}=0$ in the SDP representation of $W_k$.
Then by Lemma~\ref{lem:cvhdef},
we immediately have
\[
\mc{C} = \cv{\bigcup_{k=1}^m W_k}.
\]
\noindent
(b)
Consider
$
W_1 = \left\{ x\in \re^2:
\bbm -x_1 & 1 \\ 1 & x_2  \ebm \succeq 0
\right\},
W_2 = \left\{ x\in \re^2:
\bbm x_1 & 1 \\ 1 & x_2  \ebm \succeq 0
\right\}.
$
It can be verified that $\cv{W_1\cup W_2}$ is given by
\[
\mc{C} =
\left\{x+y\in \re^2:\, \exists\,\lmd\in\Dt_2,
\bbm -x_1 & \lmd_1 \\ \lmd_1 & x_2  \ebm \succeq 0,
\bbm y_1 & \lmd_2 \\ \lmd_2 & y_2  \ebm \succeq 0
\right\}
=\{x\in\re^2:\, x_2>0\}.
\]
%
\end{example}

\bigskip
\section{Sufficient and necessary conditions for SDP representable sets}
\label{sec:cvbsa}
\setcounter{equation}{0}

In this section,
we present sufficient conditions and necessary conditions
for SDP representability of a compact convex semialgebraic set $S$.
As we will see,
these sufficient conditions and necessary conditions
are very close with the main gaps being
between the boundary $\pt S$ having positive versus nonnegative
curvature and between the defining polynomials being singular or not
on the part of the boundary where they vanish.
A case which bypasses the gaps is when
some defining polynomials are sos-concave,
i.e., their negative Hessians are SOS.

Our approach is to start with
convex sets which are basic semialgebraic,
and to give weaker sufficient conditions than those given in \cite{HN1}:
the defining polynomials
are either sos-concave or strictly quasi-concave
on the part of the boundary $\pt S$ where they vanish
(not necessarily on the whole set).
And then we give
similar sufficient conditions
for convex sets that are {\em not} basic semialgebraic.
Lastly, we give necessary conditions
for SDP representability:
the defining polynomials are quasi-concave
on nonsingular points
on the part of the boundary of $S$ where
they vanish.

Let us begin with reviewing
some background about curvature and quasi-concavity.
The key technique for proving the sufficient conditions
is to localize
to small balls containing a piece of $\bdrS$,
use the strictly quasi-concave function results
(Theorem~2 in \cite{HN1}) to represent
these small sets,
and then to apply Theorem~\ref{thm:mUsdp}
to patch all of these representations together,
thereby obtaining an SDP representation of $S$.

\subsection{Curvature and quasi-concavity}

We first review the definition of {\it curvature}.
For a smooth function $f(x)$ on $\re^n$,
suppose the
zero set $Z(f):=\{x\in\re^n:\, f(x)=0\}$
is nonsingular
at a point $u\in Z(f)$, i.e., $\nabla f(u)\ne 0$.
Then $Z(f)$ is a smooth hypersurface near the point $u$.
$Z(f)$ is said to have {\it positive curvature} at
the nonsingular point $u\in Z(f)$
if its {\it second fundamental form } is positive definite, i.e.,
\be \label{def:posCurv}
 -v^T\nabla^2 f(u)v > 0 , \  \ \forall\,
0\ne v \in \nabla f(u)^\perp
\ee
where $\nabla f(u)^\perp:= \{v\in\re^n:\,\nabla f(u)^Tv=0\}$.
For a subset $V\subset Z(f)$, we say
$Z(f)$ has {\it positive curvature on $V$} if
$f(x)$ is nonsingular on $V$ and
$Z(f)$ has positive curvature at every $u \in V$.
When $>$ is replaced by $\geq$ in \reff{def:posCurv},
we can similarly define $Z(f)$ has {\it nonnegative curvature} at $u$.
We emphasize that this definition applies to
any zero sets defined by smooth functions
on their nonsingular points.
This is needed in \S\ref{sec:ghomi}.
We refer to Spivak \cite{Spivak} for more on
curvature and the second fundamental form.

The sign ``$-$" in the front of \reff{def:posCurv} might look confusing
for some readers, since $Z(f)$ and $Z(-f)$ define exactly the same zero set.
Geometrically, the curvature of a hypersurface should be
independent of the sign of the defining functions.
The reason for including the minus sign in \reff{def:posCurv}
is we are interested in the case where the set $\{x: f(x)\geq 0\}$
is locally convex near $u$ when $Z(f)$ has positive curvature at $u$.
Now we give more geometric perspective by describing
alternative formulations of positive curvature.
Geometrically, the zero set $Z(f)$ has
nonnegative (resp. positive) curvature
at a nonsingular point $u \in Z(f)$
if and only if there exists an open set $\mc{O}_u$
such that $Z(f)\cap \mc{O}_u$ can be represented as the graph of
a function $\phi$ which is (strictly) convex at the origin in an
{\it appropriate} coordinate system
(see Ghomi \cite{Ghomi}).
Here we define a function to be convex (resp. strictly convex)
at some point if its Hessian is positive semidefinite
(resp. definite) at that point.
Also note when $Z(f)$ has positive curvature at $u$,
the set $\{x: f(x)\geq 0\}$ is locally convex near $u$
if and only if \reff{def:posCurv} holds,
or equivalently the set $\{x: f(x) \geq 0\} \cap \mc{O}_u$ is above the graph of function $\phi$.
Now we prove the statements above and show such $\phi$ exists.
When the gradient $\nabla f(u) \ne 0$,
by the Implicit Function Theorem,
in an open set near $u$ the hypersurface $Z(f)$
can be represented as the graph of some smooth function
in a certain coordinate system.
Suppose the origin of this coordinate system corresponds to the point $u$,
and the set $\{x: f(x)\geq 0\}$ is locally convex near $u$.
Let us make the affine linear
coordinate transformation
\be \label{newcoord}
x-u= \bbm \nabla f(u) &  G(u) \ebm^T \bbm y \\ x^\prm \ebm
\ee
where $(y,x^\prm)\in \re \times \re^{n-1}$ are new coordinates
and $G(u)$ is an orthogonal basis for subspace $\nabla f(u)^\perp$.
By the Implicit Function Theorem, since  $\nabla f(u) \ne 0$,
in some neighborhood $\mc{O}_u$ of $u$,
the equation $f(x)=0$ defines
a smooth function $y=\phi(x^\prm)$.
For simplicity, we reuse the letter $f$ and
write $f(x^\prm, y)= f(x^\prm, \phi(x^\prm))=0$.
Since $\nabla f(u)$ is orthogonal to $G(u)$,
we have $f_y(0,0) =1$ and $\nabla_{x^\prm} \phi(0) =0$.
Twice differentiating $f(x^\prm,y)=0$ gives
\[
\nabla_{x^\prm x^\prm} f   +  \nabla_{x^\prm y} f  \nabla_{x^\prm} \phi^T
+ \nabla_{x^\prm} \phi  \nabla_{x^\prm y} f^T + f_{yy} \nabla_{x^\prm} \phi  \nabla_{x^\prm} \phi^T
+ f_y \nabla_{x^\prm x^\prm} \phi = 0.
\]
Evaluate the above at the origin in the new coordinates $(y,x^\prm)$,
to get
\[
\nabla_{x^\prm x^\prm} \phi (0) = - \nabla_{x^\prm x^\prm} f(u).
\]
So we can see $Z(f)$ has positive (resp. nonnegative) curvature at $u$
if and if the function $y = \phi(x^\prm)$ is strictly convex
(resp. convex) at $u$.
Since at $u$ the direction $\nabla f(u)$ points to the inside of the set $\{x:f(x)\geq 0\}$,
the intersection $\{x: f(x)\geq 0\} \cap \mc{O}_u$ lies above
the graph of $\phi$.

The notion of positive curvature of a nonsingular hypersurface
$Z(f)$ does not distinguish one side of $Z(f)$ from the other.
For example, the boundary of
the unit ball $\bar{B}(0,1)$ is
the unit sphere, a manifold with positive
curvature by standard convention. However,
$\bar B(0,1)$ can be expressed as $\{x: f(x) \geq 0 \}$
where $f(x)= 1- \|x\|^2$, or equivalently as
 $\{x: h(x) \leq 0 \}$ where $h(x)= \|x\|^2 - 1$.
Note that $Z(f)=Z(h)$,
but $-\nabla^2 f(x) \succ 0$ and
$+ \nabla^2 h(x) \succ 0$.

%


However, on a nonsingular hypersurface $Z(f)$
one can designate its sides
by choosing one of the two normal directions  $\pm \nu(x)$
at points $x$ on $Z(f)$. We call one such determination
at some point, say $u$,  the {\it outward direction},
and then select, at each $x$, the continuous function $\nu(x)$ to
be consistent with this determination.
In the ball example, $\nabla f(x) = - 2x$ and we would
typically choose $\nu(x) = - \nabla f(x)$
to be the outward normal direction to $Z(f)$.
In the more general case described below equation
\eqref{newcoord},
let us call $-\nabla f(x)$ the outward normal,
which near the origin points away from the set
$\{ (x', y): y \geq \phi(x') \}$.
To see this, note that $ - \nabla f(0,0) = \left[%
\begin{array}{r}
 0 \\
 -1 \\
\end{array}%
\right]$.

\smallskip

We remark that the definition of positive curvature for some
hypersurface $Z$ at a nonsingular point is independent
of the choice of defining functions.
Suppose $f$ and $g$ are smooth defining functions such that
$Z\cap B(u,\dt)=Z(f)\cap B(u,\dt)=Z(g)\cap B(u,\dt),
\nabla f(u) \ne 0 \ne \nabla g(u)$ for some $\dt >0$ and
\be
\label{eq:Bf}
\{x\in B(u,\dt): f(x)\geq 0\}
\ = \ \{x\in B(u,\dt): g(x)\geq 0\} .
\ee
Then
the second fundamental form in terms of $f$ is positive definite
(resp. semidefinite) at $u$ if and only if
the second fundamental form in terms of $g$ is positive definite
(resp. semidefinite) at $u$.
To see this, note that $\nabla f(u) = \af \nabla g(u)$
for some scalar $\af \neq 0$,
because $\nabla f(u)$ and $\nabla g(u)$ are perpendicular
to the boundary of $Z$ at $u$.
Also $\af>0$ because of \reff{eq:Bf}.
Then in the new coordinate system $(y,x^\prm)$ defined in \reff{newcoord},
as we have seen earlier,
$Z$ has nonnegative (resp. positive) curvature at $u$ if and only if
the function $y = \phi(x^\prm)$ is convex (resp. strictly convex) at $u$,
which holds if and only if
either one of $f$ and $g$ has
positive definite (resp. semidefinite) second fundamental form.
So the second fundamental form of $f$ and $g$
are simultaneously positive definite or semidefinite.

\medskip

The smooth function $f(x)$ on $\re^n$
is said to be {\it strictly quasi-concave at $u$}
if the condition~\reff{def:posCurv} holds.
When $\nabla f(u)$ vanishes, we require
$-\nabla^2 f(u) \succ 0$ in order
for $f(x)$ to be strictly quasi-concave at $u$.
For a subset $V\subset \re^n$, we say
$f(x)$ is {\it strictly quasi-concave on $V$} if
$f(x)$ is strictly quasi-concave on every point on $V$.
When $>$ is replaced by $\geq$ in \reff{def:posCurv},
we can similarly define $f(x)$ to be {\it quasi-concave}.
We remark that our definition of quasi-concavity here
is slightly less demanding than the usual
 definition of quasi-concavity
in the existing literature
(see Section~3.4.3 in \cite{BV}).

\medskip
Recall that a polynomial $g(x)$ is said to be
{\it sos-concave} if
$-\nabla^2 g(x) = W(x)^TW(x)$
for some possibly nonsquare matrix polynomial $W(x)$.
The following theorem gives sufficient conditions
for SDP representability
in terms of sos-concavity or strict quasi-concavity.

\begin{thm} (Theorem~2 \cite{HN1}) \label{thm:qscv}
Suppose $S=\{x\in\re^n:\, g_1(x)\geq 0, \cdots, g_m(x)\geq 0\}$ is
a compact convex set defined by polynomials $g_i(x)$
and has nonempty interior.
For each $i$, if $g_i(x)$ is either sos-concave or
strictly quasi-concave on $S$, then
$S$ is SDP representable.
\end{thm}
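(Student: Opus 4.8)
\emph{Proposed approach.}
The plan is to combine a Lasserre--Parrilo moment relaxation (\cite{Las06,Par06}) with the patching device of Theorem~\ref{thm:mUsdp}: treat the sos-concave $g_i$ globally via an explicit polynomial certificate, and the merely strictly quasi-concave ones by localizing to small balls along $\bdS$ and reassembling. The algebraic engine is an identity coming from sos-concavity. If $-\nabla^2 g(x)=W(x)^TW(x)$, Taylor's formula with integral remainder gives
\[
g(x)-g(y)-\nabla g(y)^T(x-y)=-\int_0^1(1-t)\,\big\|W\big(y+t(x-y)\big)(x-y)\big\|^2\,dt .
\]
Expanding $W(y+t(x-y))(x-y)$ in powers of $t$ and integrating termwise, the right side becomes a Gram form whose Gram matrix $\big[\tfrac{1}{(j+k+1)(j+k+2)}\big]_{j,k}$ is the (positive semidefinite) moment matrix of the measure $(1-t)\,dt$ on $[0,1]$; hence it equals $-\sigma_g(x,y)$ for an SOS polynomial $\sigma_g$ of degree at most $2(\deg W+1)$ in $x$, uniformly in $y$, so that $\nabla g(y)^T(x-y)=g(x)-g(y)+\sigma_g(x,y)$.

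Using this, I would first dispatch the case where \emph{every} $g_i$ is sos-concave: here the order-$N$ moment relaxation is already exact, with $N:=1+\max_i\deg W_i$, and its LMI lift is the sought SDP representation of $S$. By moment--SOS duality it suffices to show that every affine $\ell$ with $\ell\ge0$ on $S$ lies in the degree-$2N$ truncated quadratic module generated by the $g_i$. Normalize $\min_S\ell=0$ and pick a minimizer $u$; since $S$ has nonempty interior Slater's condition holds, so KKT gives $\nabla\ell=\sum_{i\in I(u)}\lmd_i\,\nabla g_i(u)$ with $\lmd_i\ge0$, where $I(u)=\{i:g_i(u)=0\}$. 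Since $\ell(u)=0$, applying the identity at $y=u$ (so $g_i(u)=0$) yields
\[
\ell(x)=\nabla\ell^T(x-u)=\sum_{i\in I(u)}\lmd_i\big(g_i(x)+\sigma_{g_i}(x,u)\big),
\]
and as the $\sigma_{g_i}(\cdot,u)$ are SOS of degree $\le 2(\deg W_i+1)\le 2N$ and $\lmd_i\ge0$, this is a representation of the required kind. The relaxation always contains $S$ and is contained in $\{x:\ell(x)\ge0\text{ for all such }\ell\}=S$, so the two coincide.

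For the general case I would localize. Cover $\bdS$ by finitely many balls $B(u_1,r_1),\dots,B(u_p,r_p)$ centered on $\bdS$, and cover the compact set $S\setminus\bigcup_{k\le p}B(u_k,r_k)\subset\overset{\circ}{S}$ by finitely many balls with $\overline{B(u_k,r_k)}\subset\overset{\circ}{S}$. Each interior piece $S\cap\overline{B(u_k,r_k)}=\overline{B(u_k,r_k)}$ has a Schur-complement LMI. For a boundary ball with $r_k$ small, $S\cap\overline{B(u_k,r_k)}$ is compact convex, and its active constraints near $u_k$ are the $g_i$ vanishing at $u_k$ together with the sos-concave function $r_k^2-\|x-u_k\|^2$; the sos-concave $g_i$ are handled as above, while for a strictly quasi-concave $g_i$ one uses that, by continuity, $Z(g_i)$ is still positively curved on $Z(g_i)\cap\overline{B(u_k,r_k)}$, so in the normal form \reff{newcoord} it is locally the graph of a strictly convex function with $\{g_i\ge0\}$ lying above it --- the boundary-Hessian situation in which linear functions nonnegative near $u_k$ carry local SOS certificates, of degree bounded uniformly over $\bdS$ by compactness (as in \cite{HN1}). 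Hence each $S\cap\overline{B(u_k,r_k)}$ is SDP representable by a \emph{direct} argument rather than a circular reapplication of the theorem. Since the balls cover $S$ and $S$ is convex, $S=\cv{\bigcup_k\big(S\cap\overline{B(u_k,r_k)}\big)}$ with all pieces bounded and SDP representable, so Theorem~\ref{thm:mUsdp} assembles an SDP representation of $S$.

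The integral-remainder identity, the KKT bookkeeping, and the patching are routine. The hard part will be the local boundary analysis in the last step: showing that near a positively curved nonsingular boundary point a linear function that is nonnegative there admits an SOS certificate with a degree bound uniform over $\bdS$. This is the one place where strict quasi-concavity --- positive, not merely nonnegative, curvature --- is genuinely used, and it rests on local-to-global SOS results together with the normal form recalled above; singular boundary points and points of curvature degeneracy fall outside this scheme and require the separate arguments indicated elsewhere in the paper.
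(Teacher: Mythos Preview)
This theorem is not proved in the present paper at all: it is quoted verbatim as Theorem~2 of \cite{HN1} and used as a black box throughout (e.g.\ in the proofs of Theorems~\ref{thm:bcsa}, \ref{thm:cvnbsa}, \ref{thm:cvhbcsasdp}). So there is no ``paper's own proof'' to compare against; the relevant comparison is with \cite{HN1}, where the argument is global --- a matrix Positivstellensatz applied to the Lagrangian Hessian on all of $S$ yields a uniform-degree Schm\"udgen/Putinar certificate for every supporting affine function, and hence exactness of a fixed-order moment relaxation. No localization or patching via Theorem~\ref{thm:mUsdp} is used there.

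Your sos-concave treatment is clean and correct: the integral-remainder identity does produce a genuine degree-bounded quadratic-module certificate for supporting affine functions, and KKT is available by Slater. This part stands on its own.

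The strictly quasi-concave part, however, is not a proof but a deferral. After localizing you still need that $S\cap \bar B(u_k,r_k)$ is SDP representable when the active $g_i$ are strictly quasi-concave on the ball; you describe this as ``the boundary-Hessian situation in which linear functions nonnegative near $u_k$ carry local SOS certificates \dots\ (as in \cite{HN1})''. That sentence \emph{is} the content of Theorem~\ref{thm:qscv}: the passage from positive second fundamental form to a uniform-degree SOS representation is exactly the hard analytic step of \cite{HN1} (requiring a matrix Positivstellensatz with quantitative degree control), and it is not made any easier by restricting to a small ball. So your outline reduces the theorem to itself on smaller domains. If you want a genuinely independent route, you must supply that local certificate explicitly --- e.g.\ replace each strictly quasi-concave $g_i$ on $\bar B(u_k,r_k)$ by a new defining function with negative-definite Hessian (as \cite{HN1} does via a multiplier, or as \S\ref{sec:ghomi} here does via an extension) and then run your sos-concave argument; short of that, the ``hard part'' you flag is the whole theorem.
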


\subsection{Sufficient and necessary conditions on defining polynomials}

In this subsection, we give sufficient conditions
as well as necessary conditions
for SDP representability
for both basic and nonbasic convex semialgebraic sets.
These conditions are about the properties of
defining polynomials on the part of the boundary
where they vanish,
instead of the whole set.
This is different from the conditions given in \cite{HN1}.
Let us begin with a proposition which is often used later.

\begin{pro} \label{pro:cvxsdpcov}
Let $S$ be a compact convex set.
Then $S$ is SDP representable if and only if
for every $u\in \pt S$, there exists some $\dt>0$
such that $S \cap \bar B(u,\dt)$ is SDP representable.
\end{pro}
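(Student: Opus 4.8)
The plan is to prove the two implications separately. The ``only if'' direction is immediate: if $S$ is SDP representable, then for any $u\in\bdS$ and $\delta>0$ the set $S\cap \bar B(u,\delta)$ is the intersection of two SDP representable sets — note that a closed Euclidean ball $\bar B(u,\delta)$ is LMI representable, being cut out by a Schur-complement LMI — and an intersection of SDP representable sets is SDP representable, as observed at the beginning of Section~\ref{sec:sdpunion}. So I would dispatch this direction in one line.

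The substance is the ``if'' direction, and here the plan is to write $S$ as the convex hull of a \emph{finite} union of bounded SDP representable sets and then quote Theorem~\ref{thm:mUsdp}. First I would recall the standard fact that a nonempty compact convex set is the convex hull of its topological boundary, $S=\cv{\bdS}$ (by Minkowski's theorem / Krein--Milman, every extreme point of $S$ lies on $\bdS$). Since $S$ is compact, $\bdS$ is compact; for each $u\in\bdS$ pick, by hypothesis, a radius $\delta_u>0$ with $S\cap \bar B(u,\delta_u)$ SDP representable, and extract from the open cover $\{B(u,\delta_u)\}_{u\in\bdS}$ a finite subcover $B(u_1,\delta_1),\dots,B(u_N,\delta_N)$. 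Setting $W_k:=S\cap \bar B(u_k,\delta_k)$, each $W_k$ is nonempty (it contains $u_k$), convex, bounded (being a subset of $S$), and SDP representable. From $\bdS\subseteq\bigcup_{k=1}^N B(u_k,\delta_k)$ and $\bdS\subseteq S$ one gets $\bdS\subseteq\bigcup_{k=1}^N W_k\subseteq S$; taking convex hulls and using $S=\cv{\bdS}$ yields $S=\cv{\bigcup_{k=1}^N W_k}$. Theorem~\ref{thm:mUsdp} — specifically the bounded case \eqref{eq:u=} — then gives an explicit SDP representation of $\cv{\bigcup_{k=1}^N W_k}$, hence of $S$, completing the proof.

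I do not anticipate a genuine obstacle: the argument is a compactness reduction feeding into Theorem~\ref{thm:mUsdp}. The only points needing a little care are verifying that each $W_k$ satisfies the hypotheses of that theorem (nonempty, convex, bounded) and handling degenerate cases — $S=\emptyset$ is trivially SDP representable, and when $\mathrm{int}(S)=\emptyset$ one has $\bdS=S$, so the argument runs verbatim. An alternative that sidesteps the appeal to Krein--Milman is to also include a ball $\bar B(x_0,r)\subseteq S$ around an interior point as an extra piece $W_0$ and to verify, via a ``ray from $x_0$'' argument, that every $x\in S$ lies in $\cv{W_0\cup W_k}$ for some $k$; this is elementary but more computational, so I would present the convex-hull-of-the-boundary version as the main proof.
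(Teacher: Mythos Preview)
Your proposal is correct and follows essentially the same route as the paper's proof: both directions match, with the ``if'' part using compactness of $\bdS$ to extract a finite subcover, the identity $S=\cv{\bdS}$ to sandwich $S=\cv{\bigcup_k W_k}$, and then Theorem~\ref{thm:mUsdp} in the bounded case. Your added remarks on degenerate cases and the alternative interior-point argument are nice touches but not needed for the core proof.
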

\begin{proof}
$``\Rightarrow"$\,
Suppose $S$ has SDP representation
{\small
\[
S = \left\{ x\in \re^n: A+\sum_{i=1}^n x_iB_i + \sum_{j=1}^N u_j C_j\succeq 0\right\}
\]
}
for symmetric matrices $A,B_i,C_j$.
Then $S\cap \bar B(u,\dt)$ also has SDP representation
{\small
\[
\left\{ x\in \re^n: A+\sum_{i=1}^n x_iB_i + \sum_{j=1}^N u_j C_j\succeq 0, \quad
\bbm I_n & x-u \\ (x-u)^T & \dt^2 \ebm \succeq 0 \right\}.
\]
}

$``\Leftarrow"$\,
Suppose for every $u\in \pt S$
the set $S \cap \bar B(u,\dt_u)$ has SDP representation for some $\dt_u >0$.
Note that $\left\{B(u,\dt_u):\,u\in \pt S\right\}$ is an open cover
for the compact set $\pt S$.
So there are a finite number of balls, say,
$B(u_1,\dt_1),\cdots, B(u_L,\dt_L)$, to cover $\pt S$.
Note that
\[
S = \cv{\pt S} =\cv{ \bigcup_{k=1}^L (\pt S \cap \bar B(u_k,\dt_k) ) \, }
\subseteq \cv{ \bigcup_{k=1}^L (S \cap \bar B(u_k,\dt_k) ) \, }
\subseteq  S.
\]
The sets $S\cap \bar B(u_k,\dt_k)$ are all bounded.
By Theorem~\ref{thm:mUsdp},
we know
\[ S=  \cv{ \bigcup_{k=1}^L S \cap \bar B(u_k,\dt_k) } \]
has SDP representation.
\end{proof}

\medskip

When the set $S$ is basic closed semialgebraic,
we have the following sufficient condition for SDP representability,
which strengthens Theorem~\ref{thm:qscv}.

\begin{theorem} \label{thm:bcsa}
Assume
$S =\{ x\in \re^n:\, g_1(x)\geq 0,\, \cdots,\, g_m(x)\geq 0 \}$
is a compact convex set defined by polynomials $g_i$
and has nonempty interior.
If for every $u\in \bdS$ and $i$ for which $g_i(u)=0$,
$g_i$ is either sos-concave or
strictly quasi-concave at $u$,
then $S$ is SDP representable.
\end{theorem}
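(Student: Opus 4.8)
The idea is to reduce the global problem to local pieces via Proposition~\ref{pro:cvxsdpcov}, and then show each local piece is SDP representable using the already-established Theorem~\ref{thm:qscv}. So fix $u\in\bdS$. The set of indices splits into the \emph{active} ones $A(u)=\{i: g_i(u)=0\}$ and the \emph{inactive} ones. For inactive $i$ we have $g_i(u)>0$, so by continuity $g_i>0$ on a small ball $\bar B(u,\dt)$, and such constraints are redundant on that ball. Hence for $\dt$ small enough,
\[
S\cap \bar B(u,\dt) \;=\; \{x\in \bar B(u,\dt):\, g_i(x)\ge 0,\ i\in A(u)\}.
\]
For each active $i$, the hypothesis says $g_i$ is either sos-concave or strictly quasi-concave \emph{at} $u$; in the latter case $\nabla g_i(u)\neq 0$ (since $g_i(u)=0$ and $S$ has interior, the vanishing-gradient alternative in the definition of strict quasi-concavity would force a strict local max, contradicting that points of $S$ near $u$ have $g_i>0$ on a full-dimensional set), so by continuity of the Hessian and of $\nabla g_i$, the strict inequality in \eqref{def:posCurv} persists on a neighborhood of $u$: $g_i$ is strictly quasi-concave on $\bar B(u,\dt)$ for $\dt$ small. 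Sos-concavity is a global algebraic property, so it is automatically inherited. Finally, $x\mapsto \dt^2-\|x-u\|^2$ is a concave quadratic, in particular sos-concave (its negative Hessian is $2I_n$), so the ball constraint is of the allowed type.

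The second step is to verify the hypotheses of Theorem~\ref{thm:qscv} for the set $S\cap\bar B(u,\dt)$ with defining polynomials $\{g_i: i\in A(u)\}\cup\{\dt^2-\|x-u\|^2\}$. That set is compact and convex, being the intersection of the convex set $S$ with a ball. It has nonempty interior provided $u$ is not an isolated boundary point relative to the interior of $S$ — but $S$ is convex with nonempty interior, so $\intr{S}$ is dense in $S$ and every ball $\bar B(u,\dt)$ meets $\intr S$ in a nonempty open set; thus $\intr{(S\cap \bar B(u,\dt))}\neq\emptyset$. Each defining polynomial is, on this set, either sos-concave or strictly quasi-concave on the whole set — this is exactly what the first step arranged. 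Hence Theorem~\ref{thm:qscv} applies and $S\cap\bar B(u,\dt)$ is SDP representable.

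With local SDP representability of $S\cap\bar B(u,\dt_u)$ established for every $u\in\bdS$, Proposition~\ref{pro:cvxsdpcov} (whose ``$\Leftarrow$'' direction itself invokes the compactness of $\bdS$ and Theorem~\ref{thm:mUsdp} to patch the finitely many local pieces together) immediately yields that $S$ is SDP representable.

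\textbf{Main obstacle.} The one genuinely delicate point is the persistence of strict quasi-concavity from the single point $u$ to a whole ball around it, and in particular ruling out the degenerate alternative $\nabla g_i(u)=0$ for active $i$. One must argue that an active defining polynomial of a convex body with interior cannot have vanishing gradient on $\bdS$ while still being strictly quasi-concave there in a way compatible with $S$ having interior near $u$ — or, more carefully, handle that case too: if $\nabla g_i(u)=0$ then strict quasi-concavity requires $-\nabla^2 g_i(u)\succ 0$, so $g_i$ is strictly concave near $u$, hence again strictly quasi-concave on a small ball, and the argument goes through uniformly. So the real work is a short continuity/compactness argument showing a single $\dt$ works for all finitely many active indices simultaneously, plus bookkeeping to confirm the intersected set still satisfies every clause of Theorem~\ref{thm:qscv}; everything else is routine.
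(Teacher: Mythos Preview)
Your proposal is correct and follows essentially the same approach as the paper: localize via Proposition~\ref{pro:cvxsdpcov}, drop inactive constraints by continuity, extend strict quasi-concavity from the point $u$ to a small ball by continuity, and then invoke Theorem~\ref{thm:qscv} on $S\cap\bar B(u,\dt)$. Your write-up is in fact more careful than the paper's in justifying the persistence of strict quasi-concavity (including the degenerate case $\nabla g_i(u)=0$), the sos-concavity of the ball constraint, and the nonemptiness of the interior of the localized set, all of which the paper leaves implicit.
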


\noindent
{\it Remarks:}
(i)
This result is stronger than Theorem~2 of \cite{HN1}
which requires each $g_i$ is either {\it sos-concave}
or strictly quasi-concave on the whole set $S$
instead of only on the boundary.
(ii)
The special case that some of the  $g_i$ are linear
is included in sos-concave case.
(iii) Later we
 will present a slightly weaker version of Theorem~\ref{thm:bcsa}
by using conditions on the curvature of the boundary
and give a very different but more geometric proof
based on Theorems~3 and 4 in \cite{HN1}.
This is left in \S \ref{sec:ghomi}.

\begin{proof}
For any $u\in \bdS$,
let $I(u)=\{1\leq i\leq m: g_i(u)=0\}$.
For every $i\in I(u)$,
if $g_i(x)$ is not sos-concave,
$g_i(x)$ is strictly quasi-concave at $u$.
By continuity, there exist some $\dt >0$ such that
$g_i(x)$ is strictly quasi-concave on $\bar B(u,\dt)$.
Note $g_i(u)>0$ for $i\notin I(u)$.
So we can choose $\dt >0$ small enough such that
\[
g_i(x) > 0,  \forall\, i \notin I(u), \,  \forall \, x\in \bar B(u,\dt).
\]
Therefore, the set $S_u :=S \cap \bar B(u, \dt)$ can be defined equivalently
by only using active $g_i$, namely,
\[
S_u = \left\{x\in\re^n:\, g_i(x) \geq 0,\forall\, i\in I(u), \, \dt^2 - \|x-u\|^2 \geq 0\right\}.
\]
For every $i \in I(u)$, the defining polynomial $g_i(x)$ is either sos-concave or
strictly quasi-concave on $S_u$.
Obviously $S_u$ is a compact convex set with nonempty interior.
By Theorem~\ref{thm:qscv}, $S_u$ is SDP representable.
And hence by Proposition~\ref{pro:cvxsdpcov}, $S$ is also SDP representable.
\end{proof}

\medskip

Now we turn to the SDP representability problem
when $S$ is not basic  semialgebraic.
Assume $S=\bigcup_{k=1}^m T_k$ is compact convex.
Here each
$T_k = \{ x\in \re^n:\, g^k_1(x)\geq 0,\, \cdots,\, g^k_m(x)\geq 0 \}$
is basic closed semialgebraic but
not necessarily convex.
Similar sufficient conditions on $T_k$ for
the SDP representability of $S$
can be established.

\begin{theorem}[Sufficient conditions for SDP representability]
\label{thm:cvnbsa}
Suppose $S=\bigcup_{k=1}^m T_k$
is a compact convex semialgebraic set with each
\[
T_k=
\{ x\in \re^n:\, g^k_1(x)\geq 0,\, \cdots,\, g^k_{m_k}(x)\geq 0 \}
\]
being defined by polynomials $g^k_i(x)$.
If for every $u \in \bdS$, and each $g^k_i$ satisfying $g^k_i(u)=0$,
$T_k$ has interior near $u$ and
$g^k_i(x)$ is either sos-concave or strictly quasi-concave at $u$,
then $S$ is SDP representable.
\end{theorem}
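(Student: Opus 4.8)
The plan is to run the argument of Theorem~\ref{thm:bcsa}, but with the union structure of $S$ built into the localization. By Proposition~\ref{pro:cvxsdpcov} it is enough to find, for each $u\in\bdS$, a $\delta>0$ making $S\cap\bar B(u,\delta)$ SDP representable. I would do this by exhibiting this local piece of $S$ as a convex hull of bounded pieces, one from each relevant $T_k$, representing each piece by Theorem~\ref{thm:qscv}, and assembling them with Theorem~\ref{thm:mUsdp}.

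So fix $u\in\bdS$. Each $T_k$ is closed, so for $k$ with $u\notin T_k$ one has $\bar B(u,\delta)\cap T_k=\emptyset$ whenever $\delta<\operatorname{dist}(u,T_k)$; hence for $\delta$ small
\[
S\cap\bar B(u,\delta)=\bigcup_{k:\,u\in T_k}\big(T_k\cap\bar B(u,\delta)\big).
\]
For such a $k$ put $I_k=\{i:g^k_i(u)=0\}$. Since an interior point of $T_k\subseteq S$ would be interior to $S$, the point $u\in\bdS\cap T_k$ lies in $\pt T_k$, so $I_k\neq\emptyset$; shrinking $\delta$ so that $g^k_i>0$ on $\bar B(u,\delta)$ for the finitely many $i\notin I_k$ gives
\[
T_k\cap\bar B(u,\delta)=\big\{x\in\re^n:\ g^k_i(x)\ge0\ (i\in I_k),\ \ \delta^2-\|x-u\|^2\ge0\big\}.
\]

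The heart of the proof is to check that, after one more shrinking of $\delta$ (taken uniform over the finitely many active $i$ and the finitely many relevant $k$), each $T_k\cap\bar B(u,\delta)$ is a compact convex set with nonempty interior whose listed defining polynomials are sos-concave or strictly quasi-concave on it, so that Theorem~\ref{thm:qscv} applies. Here $\delta^2-\|x-u\|^2$ is sos-concave; for $i\in I_k$ with $g^k_i$ sos-concave, $\{g^k_i\ge0\}$ is globally convex and sos-concavity is global. For $i\in I_k$ with $g^k_i$ strictly quasi-concave at $u$, I would first observe $\nabla g^k_i(u)\neq0$: otherwise strict quasi-concavity forces $-\nabla^2 g^k_i(u)\succ0$, which together with $g^k_i(u)=0$ makes $u$ an isolated point of $\{g^k_i\ge0\}$ and hence $T_k\cap B(u,\delta)=\{u\}$, contradicting that $T_k$ has interior near $u$. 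With $\nabla g^k_i(u)\neq0$ and positive curvature at $u$, the local graph description of $Z(g^k_i)$ from \S\ref{sec:cvbsa} --- in the coordinates of \eqref{newcoord}, $Z(g^k_i)$ is the graph of a $\phi$ with $\nabla^2\phi$ positive definite (hence $\phi$ convex) near the origin, and $\{g^k_i\ge0\}$ is locally its epigraph --- shows $\{g^k_i\ge0\}\cap\bar B(u,\delta)$ is convex for $\delta$ small; and strict quasi-concavity, being an open condition at $u$, persists on $\bar B(u,\delta)$. A finite intersection of such convex sets with $\bar B(u,\delta)$ is again convex, and $T_k$ having interior near $u$ furnishes an interior point of $T_k\cap\bar B(u,\delta)$. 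So Theorem~\ref{thm:qscv} gives an SDP representation of each $T_k\cap\bar B(u,\delta)$.

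Finally $S\cap\bar B(u,\delta)$ is convex (a ball meets the convex $S$) and contains each bounded piece $T_k\cap\bar B(u,\delta)$, so
\[
S\cap\bar B(u,\delta)=\cv{\bigcup_{k:\,u\in T_k}\big(T_k\cap\bar B(u,\delta)\big)};
\]
all pieces are bounded, so Theorem~\ref{thm:mUsdp} makes the right-hand side SDP representable, and Proposition~\ref{pro:cvxsdpcov} then yields the SDP representability of $S$. (The case $S=\emptyset$ is trivial; otherwise the hypothesis ``$T_k$ has interior near $u$'' forces $S$ to have nonempty interior, so Theorem~\ref{thm:qscv} is applicable.) The step I expect to require the most care is the convexity of the local pieces $T_k\cap\bar B(u,\delta)$: in Theorem~\ref{thm:bcsa} convexity of the ambient $S$ made this automatic, whereas here the $T_k$ need not be convex and one must extract local convexity from the pointwise positive-curvature/sos-concavity hypotheses together with ``interior near $u$''.
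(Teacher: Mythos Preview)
Your proposal is correct and follows essentially the same approach as the paper: localize via Proposition~\ref{pro:cvxsdpcov}, strip away inactive constraints on a small ball around $u\in\bdS$, apply Theorem~\ref{thm:qscv} to each piece $T_k\cap\bar B(u,\delta)$, and glue with Theorem~\ref{thm:mUsdp}. You are in fact more careful than the paper on two points it leaves implicit: you restrict explicitly to those $k$ with $u\in T_k$, and you justify the convexity of $T_k\cap\bar B(u,\delta)$ (ruling out $\nabla g^k_i(u)=0$ via the ``interior near $u$'' hypothesis, and invoking the local epigraph description), whereas the paper simply asserts this convexity with a bare ``Hence''.
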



\begin{proof}
By Proposition~\ref{pro:cvxsdpcov},
it suffices to show that for each $u\in \bdS$
there exists $\dt>0$ such that
the intersection $S\cap \bar B(u,\dt)$
is SDP representable.
For each $u\in \bdS$,
let $I_k(u)=\{1\leq i\leq m_k: g^k_i(u)=0\}$.
By assumption, for every $i\in I_k(u)$,
if $g^k_i(x)$ is not sos-concave,
$g^k_i$ is strictly quasi-concave at $u$.
By continuity,  $g^k_i$ is strictly quasi-concave
on $\bar B(u,\dt)$ for some $\dt >0$.
Note $g^k_i(u)>0$ for all $i\notin I_k(u)$.
So $\dt>0$ can be chosen sufficiently small so that
\[
g^k_i(x) > 0,  \forall\, i \notin I(u), \,  \forall \, x\in \bar B(u,\dt).
\]
Then we can see
\[
T_k \cap \bar B(u,\dt_u) =
\left\{x\in\re^n:\, g^k_i(x)\geq 0, \forall\, i\in I_k(u),\, \dt^2- \|x-u\|^2 \geq 0 \right\}.
\]
For every $i \in I_k(u)$, the defining polynomial $g^k_i(x)$ is either sos-concave or
strictly quasi-concave on $T_k \cap \bar B(u,\dt_u)$.
Hence, the intersection $T_k \cap \bar B(u,\dt_u)$
is a compact convex set with nonempty interior.
By Theorem~\ref{thm:qscv}, $T_k \cap \bar B(u,\dt_u)$
is SDP representable.
Therefore, by Theorem~\ref{thm:mUsdp}, we know
\[
S \cap \bar B(u, \dt) =
\cv{ S \cap \bar B(u, \dt) } =
\mbox{conv} \Big( \bigcup_{k=1}^m  T_k \cap \bar B(u,\dt) \, \Big)  =
\mbox{conv} \Big( \bigcup_{k=1}^m  \,\big(T_k \cap \bar B(u,\dt)\big) \, \Big)
\]
is also SDP representable.
\end{proof}

\medskip

If the defining polynomials of a compact convex set $S$
are either sos-concave or strictly quasi-concave on the part of the boundary
of $S$ where they vanish,
Theorem~\ref{thm:cvnbsa} tell us $S$ is SDP representable.
If $S$ is the convex hull of the union of such convex sets,
Theorem~\ref{thm:mUsdp} tells us that
$S$ is also SDP representable.
We now assert that
this is not very far from the necessary conditions
for $S$ to be SDP representable.

We now need give a short review of smoothness of
the boundary of a set.
Let $S=\bigcup_{k=1}^m T_k$ and
$T_k= \{ x\in \re^n:\, g^k_1(x) \geq 0,\, \cdots,\, g^k_{m_k}(x) \geq 0\}$
with $\bdS$ and $\pt T_k$ denoting their topological boundaries.
For any $u\in \pt T_k(u)$,
the active constraint set
$I_k(u)=\{1\leq i\leq m_k:\, g^k_i(u)=0\}$ is nonempty.

We say $u$ is a {\it nonsingular point} on $\pt T_k$
if $|I_k(u)|=1$ and $\nabla g^k_i (u) \ne 0$ for $i \in I_k(u)$.
$u$ is called a {\it corner point} on $\pt T_k$ if $|I_k(u)|>1$,
and is nonsingular if $\nabla g^k_i (u) \ne 0$ for every $i \in I_k(u)$.
For $u \in \pt S$ and $i\in I_k(u) \ne \emptyset$,
we say the defining function $g^k_i$ is {\it irredundant at $u$
with respect to $\bdS$}
(or just {\it irredundant at $u$}
if the set $S$ is clear from the context)
if there exists a sequence of nonsingular points
$\{ u_N \} \subset Z(g^k_i)\cap \pt S$
such that $u_N \to u$;
otherwise, we say $g^k_i$ is {\it redundant} at $u$.
We say $g^k_i$ is {\it nonsingular} at $u$
if $\nabla g^k_i(u)\ne 0$.
Geometrically, when $g_i^k$ is nonsingular at $u \in \pt S$,
$g_i^k$ being redundant at $u$ means that the constraint $g_i^k(x)\geq 0$
could be removed without changing $S \cap B(u,\dt)$
for $\dt >0$ small enough.
A corner point $u\in \pt T_k$ is said to be {\it nondegenerate} if
$g^k_i$ is both irredundant and nonsingular at $u$
whenever $i\in I_k(u)\ne \emptyset$.

\medskip

The following gives necessary conditions for SDP representability.

\begin{theorem} (Necessary conditions for SDP representability) \label{thm:neccbsa}
If the convex set $S$ is SDP representable,
then the following holds:
\bnum
\item [(a)] The interior $\overset{\circ}{S}$ of $S$ is a finite union of
basic open semialgebraic sets, i.e.,
\[
\overset{\circ}{S} =\bigcup_{k=1}^m T_k, \quad
T_k= \{ x\in \re^n:\, g^k_1(x) > 0,\, \cdots,\, g^k_{m_k}(x) > 0\}
\]
for some polynomials $g^k_i(x)$.

\item [(b)] The closure $\overline{S}$ of $S$ is a finite union of
basic closed semialgebraic sets, i.e.,
\[
\overline{S}=\bigcup_{k=1}^m T_k, \quad
T_k = \{ x\in \re^n:\, g^k_1(x) \geq 0,\, \cdots,\, g^k_{m_k}(x) \geq 0\}
\]
for some polynomials $g^k_i(x)$
(they might be different from those in (a) above).

\item [(c)] For each $u\in \pt \overline{S}$ and $i\in I_k(u)\ne\emptyset$,
if $g^k_i$ from (b) is irredundant
and nonsingular at $u$,  then
$g^k_i$ is quasi-concave at $u$.

\enum

\end{theorem}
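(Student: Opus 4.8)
The plan is to prove the three parts essentially independently, with parts (a) and (b) being cited structure theorems about semialgebraic sets, and part (c) being the substantive claim requiring a genuine geometric argument. For (a), since $S$ is SDP representable, $S$ is the projection of the spectrahedron $\hat S$ in \eqref{eq:liftLMI}; projections of semialgebraic sets are semialgebraic by the Tarski--Seidenberg theorem, so $S$ is semialgebraic and hence so is its interior $\overset{\circ}{S}$. An open semialgebraic set is a finite union of basic open semialgebraic sets by Theorem~2.7.2 in \cite{BCR}, which gives the desired form of the $T_k$. For (b), taking closures, $\overline S$ is semialgebraic (closure of a semialgebraic set is semialgebraic, Proposition~2.2.2 in \cite{BCR}) and a closed semialgebraic set is a finite union of basic closed semialgebraic sets, again by Theorem~2.7.2 in \cite{BCR}; these may require a different choice of defining polynomials than in (a), which accounts for the parenthetical remark.

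The heart of the matter is (c). Fix $u\in\pt\overline S$ and $i\in I_k(u)$ with $g^k_i$ irredundant and nonsingular at $u$, i.e. $\nabla g^k_i(u)\neq 0$ and there is a sequence of nonsingular points $u_N\in Z(g^k_i)\cap\pt S$ with $u_N\to u$. First I would argue that near each such $u_N$, the set $S$ (equivalently $\overline S$, which is convex) locally agrees with $T_k\cap B(u_N,\delta)$, and that $g^k_i$ is the unique active constraint there (the definition of nonsingular point), so that $\pt S$ coincides locally with the smooth hypersurface $Z(g^k_i)$, with $\{x:g^k_i(x)\geq 0\}$ on the interior side by irredundancy and nonsingularity. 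Since $S$ is convex, a supporting hyperplane argument shows that a smooth boundary patch of a convex set must curve away from the interior: in the coordinate system \eqref{newcoord} centered at $u_N$ with $-\nabla g^k_i(u_N)$ the outward normal, the local graphing function $\phi$ satisfies $\phi(x')\geq 0$ near the origin with $\nabla\phi(0)=0$ (supporting hyperplane at the boundary point of a convex set), hence its Hessian $\nabla^2\phi(0)$ is positive semidefinite. By the computation following \eqref{newcoord}, $\nabla^2_{x'x'}\phi(0) = -\nabla^2_{x'x'}g^k_i(u_N)$ restricted to $\nabla g^k_i(u_N)^\perp$, so $-v^T\nabla^2 g^k_i(u_N)v\geq 0$ for all $v\perp\nabla g^k_i(u_N)$; that is, $g^k_i$ is quasi-concave at $u_N$ in the sense of \eqref{def:posCurv} with $\geq$. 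Finally I would pass to the limit $N\to\infty$: the map $v\mapsto -v^T\nabla^2 g^k_i(u_N)v$ and the projection onto $\nabla g^k_i(u_N)^\perp$ depend continuously on $u_N$ (since $\nabla g^k_i(u)\neq 0$, the perp space varies continuously), so the nonstrict inequality is preserved, yielding $-v^T\nabla^2 g^k_i(u)v\geq 0$ for all $0\neq v\in\nabla g^k_i(u)^\perp$, i.e. $g^k_i$ is quasi-concave at $u$.

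The main obstacle I anticipate is the step asserting that, at a nonsingular point $u_N$, the boundary $\pt S$ really does locally coincide with $Z(g^k_i)$ with the correct inside/outside orientation, rather than just being contained in it. This requires unwinding the definitions carefully: $u_N\in\pt S$ and $u_N$ nonsingular means $g^k_i$ is active and its gradient is nonzero, but one must rule out that $S$ is locally \emph{thin} or lies on the ``wrong'' side; here the convexity of $\overline S$ (so it has nonempty interior near any of its boundary points provided it is not lower-dimensional, which the hypothesis that $\overset{\circ}{S}\neq\emptyset$ underlying the whole discussion guarantees) together with $Z(g^k_i)$ being a smooth hypersurface forces $S$ to occupy exactly one of the two local half-spaces, and irredundancy identifies it as $\{g^k_i\geq 0\}$. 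The continuity/limit step is routine once the local structure at each $u_N$ is pinned down, and parts (a),(b) are immediate citations.
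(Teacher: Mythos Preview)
Your proposal is correct and follows essentially the same route as the paper: parts (a) and (b) are direct citations of Theorem~2.7.2 in \cite{BCR}, and part (c) is handled by a supporting-hyperplane argument at the nonsingular points $u_N$ followed by a continuity/limit step (the paper makes the limit explicit via the projection matrix $R(v)=I_n-\nabla g^k_i(v)\nabla g^k_i(v)^T/\|\nabla g^k_i(v)\|^2$, which is exactly your ``perp space varies continuously'').

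One small sharpening: you do not need, and should not claim, that $\partial S$ locally coincides with $Z(g^k_i)$ near $u_N$---since $\overline S=\bigcup_k T_k$, other pieces $T_j$ may protrude. The paper avoids this by observing only that $T_k\subseteq\overline S$, so the supporting hyperplane of $\overline S$ at $u_N$ also supports $T_k$; since $|I_k(u_N)|=1$, locally $T_k=\{g^k_i\geq 0\}$, which forces $\nabla g^k_i(u_N)$ parallel to the hyperplane normal and then $g^k_i(u_N+\varepsilon v)\leq 0$ for tangent $v$, giving quasi-concavity by Taylor expansion. Your graphing-function version goes through verbatim once you replace ``$\partial S$'' by ``$\partial T_k$'' in that step.
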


\noindent
{\it Remarks:} (i) The proof of Theorem~\ref{thm:neccbsa}
only depends on the fact that
$S$ is a convex semialgebraic set
with nonempty interior,
and does not use its SDP representation.
(ii) The polynomials $g_i^k(x)$ in item (b) might be different from
the polynomials $g_i^k(x)$ in item (a).
We use the same notations for convenience.

\begin{proof}

(a) and (b) can be seen immediately from Theorem~2.7.2 in \cite{BCR}.

(c) Let $u\in\pt \overline{S} \cap \pt T_k$. Note that $\overline{S}$ is a convex set
and has the same boundary as $S$.

First, consider the case that $u$ is a smooth point.
Since $\overline{S}$ is convex,
$\pt\overline{S}$ has a supporting hyperplane $u+w^\perp=\{u+x:\, w^Tx=0\}$.
$\overline{S}$ lies on one side of $u+w^\perp$ and so does $T_k$,
since $T_k$ is contained in $\overline{S}$.
Since $u$ is a smooth point, $I_k(u) = \{i\}$ has cardinality one.
For some $\dt >0$ sufficiently small, we have
\[
T_k \cap B(u,\dt) = \{ x\in \re^n: g^k_i(x)\geq 0, \dt^2 - \|x-u\|^2 >  0 \}.
\]
Note $u+w^\perp$ is also a supporting hyperplane of $T_k$ passing through $u$.
So, the gradient $\nabla g^k_i(u)$ must be parallel to $w$,
i.e., $\nabla g^k_i(u) = \af^k_i w$ for some nonzero scalar
$\af^k_i\ne 0$.
Thus, for all $0\ne v \in w^\perp$ and $\eps >0$ small enough,
the point $u+ \frac{\eps}{\|v\|}v$ is not in the interior of $T_k \cap B(u,\dt)$,
which implies
\[
g^k_i(u + \frac{\eps}{\|v\|}v ) \leq 0, \,\, \forall\, 0 \ne v \in w^\perp = \nabla {g^k_i(u)}^\perp.
\]
By the second order Taylor expansion, we have
\[
- v^T \nabla^2 g^k_i(u) v \geq 0 ,\,\, \forall \, 0\ne v\in \nabla {g^k_i(u)}^\perp,
\]
that is, $g^k_i$ is quasi-concave at $u$.

Second, consider the case that $u\in \pt \overline{S}$ is a corner point.
By assumption that $g^k_i$ is irredundant
and nonsingular at $u$, there exists a sequence of smooth points
$\{ u_N \} \subset Z(g^k_i)\cap \pt \overline{S}$ such that $u_N \to u$
and $\nabla g^k_i(u)\ne 0$.

So $\nabla g^k_i(u_N)\ne 0$ for $N$ sufficiently large. From
the above, we know that
\[
- v^T \nabla^2 g^k_i(u_N) v \geq 0 ,\,\, \forall \, 0\ne v\in \nabla {g^k_i(u_N)}^\perp.
\]
Note that the subspace $\nabla {g^k_i(u_N)}^\perp$
equals the range space of the matrix $R(u_N)$ where
$$
R(v):=I_n - \frac{1}{\big(g^k_i(v)\big)^Tg^k_i(v)}
 g^k_i(v) \big(g^k_i(v)\big)^T.
$$
So the quasi-concavity of $g_i^k$ at $u_N$ is equivalent to
\[
- R(u_N)^T \nabla^2 g^k_i(u_N) R(u_N) \succeq 0.
\]
Since $\nabla g^k_i(u)\ne 0$, we have $R(u_N) \to R(u)$
Therefore, letting $N\to \infty$, we get
\[
- R(u)^T \nabla^2 g^k_i(u) R(u) \succeq 0,
\]
which implies
\[
- v^T \nabla^2 g^k_i(u) v \geq 0 ,\,\, \forall \, 0\ne v\in \nabla {g^k_i(u)}^\perp,
\]
that is, $g^k_i$ is quasi-concave at $u$.
\end{proof}

We point out that in (c) of Theorem~\ref{thm:neccbsa}
the condition that $g^k_i$ is irredundant can not be dropped.
For a counterexample, consider the set
\[
S = \left\{ x\in \re^2: g^1_1(x):=1-x_1^2-x_2^2\geq 0,
\, g^1_2(x):=(x_1-2)^2+x_2^2-1\geq 0 \right\}.
\]
Choose $u=(1,0)$ on the boundary.
Then $g^1_2$ is redundant at $u$.
As we can see, $g^1_2$ is not quasi-concave at $u$.

By comparing
Theorem~\ref{thm:cvnbsa} and Theorem~\ref{thm:neccbsa},
we can see the presented sufficient conditions and necessary
conditions are pretty close.
The main gaps are between
the defining polynomials being positive versus nonnegative curvature
and between the defining polynomials being
singular or not on the part of the boundary where they vanish.
A case which bypasses the gaps is when
some defining polynomials are sos-concave.

\bigskip

As is obvious, the set of defining polynomials for a semialgebraic set
is not unique, e.g., the set remains the same
if each defining polynomial is replaced by its cubic power.
However, as we can imagine, if we use some set of defining polynomials, we can prove
the SDP representability of the set,
but if we use some other set of defining of polynomials,
we might not be able to prove that.
A simple example is that the set
$\{x: g(x):=(1-\|x\|^2)^3 \geq 0\}$ is obviously SDP representable but
none of our earlier theorems using $g(x)$ only can show this set is SDP representable.
This is because, so far, we have discussed the
SDP representability only from the view of the defining polynomials,
instead of from the view of the geometric properties of the convex sets.
Sometimes, we are more interested in the conditions on the geometry of convex sets
which is independent of defining polynomials.
This leads us to the next subsection
of giving conditions
on the geometric properties.

\subsection{Sufficient and necessary conditions on the geometry}

In this subsection, to address the SDP representability of convex semialgebraic sets,
we give sufficient conditions and necessary conditions on the geometry of the sets
instead of on their defining polynomials.

\bigskip

A subset $V\subset \re^n$ is a {\it variety}
if there exist polynomials $p_1(x),\cdots,p_m(x)$ such that
$V=\{x\in\re^n:\, p_1(x)=\cdots =p_m(x)=0\}$.
Given a variety $V$, define the ideal $I(V)$ as
\[
I(V) = \left\{ p\in \re[x]:\, p(u)=0 \text{ whenever } u\in V\right\}.
\]
Let the ideal $I(V)$ be generated by polynomials
$q_1,\cdots,q_k$.
A point $u\in V$ is said to be a {\it nonsingular point} if the matrix
$[\frac{\pt q_i}{\pt x_j}(u)]$ has full rank.
$V$ is said to be a {\it nonsingular variety} if every point of $V$
is a nonsingular point.
Note that if two varieties $V_1,V_2$ are both nonsingular
at a certain point $u$,
then their intersection variety $V_1\cap V_2$ might
be singular at $u$.
A set $Z\subset \re^n$ is said to be {\it Zariski open}
if its complement in $\re^n$ is a variety.
We refer to \cite{BCR,CLO} for more on algebraic varieties.

\begin{lemma} \label{lem:convAll}
Let $S\subset \re^n$ be a compact convex semialgebraic set
with nonempty interior. Then
\bnum

\item [(i)]  \label{it:C0}
The interior $\overset{\circ}{S}$ is the union of
basic open semialgebraic sets, i.e.,
\[
\overset{\circ}{S} =\bigcup_{k=1}^m T_k, \quad
T_k:= \{ x\in \re^n:\, g^k_1(x) > 0,\, \cdots,\, g^k_{m_k}(x) > 0\}
\]
where $g^k_i(x)$ are polynomials.
Each $T_k$ is bounded
and its closure has boundary $\pt T_k$.

\item [(ii)] \label{it:C1}
The Zariski closure $\cV^k$ of each $\pt T_k$ is
the union $\cV^k = \cV^k_1 \cup \cV^k_2 \cup \cdots \cup \cV^k_{L_k}$
of irreducible varieties of dimension $n-1$ such that
$\cV^k_i \cap \pt T_k \nsubseteq  (\cup_{j \ne i} \cV^k_i) \cap \pt T_k$.
We can write these as $\cV^k_i =\{x\in\re^n: f^k_i(x)=0\}$ for some irreducible polynomials
$f^k_i(x)$ such that the ideal $I(\cV^k_i)$ is generated by $f^k_i(x)$.
Furthermore, if every $\cV^k_i$ containing $u \in \pt T_k$ is nonsingular
at $u$ (i.e., $\nabla f^k_i(u)\ne 0$),
then
for $r>0$ sufficiently small we have
\be
\label{eq:locdef}
\overline{T_k} \cap \bar B(u,r)  =
\big( \bigcap_{1\leq i\leq L_k} \cV^{k+}_i \big) \cap \bar B(u,r), \qquad
\cV^{k+}_i := \{x\in\re^n: f^k_i(x)\geq 0\}.
\ee

\item [(iii)] \label{it:C2}
For $u \in \bdS \cap \pt T_k \cap \cV^k_i$,
we say $\cV^k_i$ is {\it irredundant} at $u$
if there exists a sequence
$\{u_N\}\subset \bdS$ converging to $u$ such that
$S\cap B(u_N,\eps_N) = \cV^{k+}_i \cap B(u_N,\eps_N)$
for some $\eps_N>0$.
If $\cV^k_i$ is nonsingular
and irredundant at $u$,
then the curvature of $\cV_i$ at $u$ is nonnegative.

\item [(iv)] \label{it:C3}
The nonsingular points in $\cV^k_i$ form a Zariski open subset of $\cV^k_i$
and their complement has a lower dimension than $\cV^k_i$ does.

\enum

\end{lemma}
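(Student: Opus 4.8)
The plan is to treat the four parts essentially separately; (i), (ii) and (iv) are facts of semialgebraic and classical algebraic geometry, while (iii) is the only place where convexity enters, and it reduces to the argument already carried out in the proof of Theorem~\ref{thm:neccbsa}(c). For (i): since $S$ is semialgebraic, convex and has nonempty interior, $\overset{\circ}{S}$ is a bounded open semialgebraic set, so by the finiteness theorem for semialgebraic sets (Theorem~2.7.2 of \cite{BCR}) it is a finite union of basic open semialgebraic sets $T_k=\{g^k_1>0,\dots,g^k_{m_k}>0\}$; each $T_k\subseteq S$ is bounded because $S$ is compact. Replacing each $T_k$ by the interior of its closure — still open semialgebraic, hence again a finite union of basic open pieces, which we relabel — we may assume every $T_k$ is regular open, i.e. $\mathrm{int}(\overline{T_k})=T_k$, so that the topological boundary of $\overline{T_k}$ is exactly $\pt T_k$. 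This part is routine.

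For (ii): the boundary $\pt T_k$ is semialgebraic of dimension $n-1$; I would let $\cV^k$ be the union of those $(n-1)$-dimensional irreducible components of its Zariski closure that meet $\pt T_k$ in a set not covered by the remaining components, and write these $\cV^k_1,\dots,\cV^k_{L_k}$ (the lower-dimensional components play no role in the local analysis near nonsingular boundary points, which is the sense in which the statement is to be read). Each $\cV^k_i$ is an irreducible variety of codimension one in $\re^n$ with smooth real points, so $I(\cV^k_i)$ is a height-one prime of the UFD $\re[x]$, hence principal, generated by an irreducible polynomial $f^k_i$ which changes sign across $\cV^k_i$; see \cite{BCR,CLO}. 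For the local identity \reff{eq:locdef}, fix $u\in\pt T_k$ at which every $\cV^k_i\ni u$ is nonsingular. Then each $Z(f^k_i)$ is a smooth embedded hypersurface in some ball $\bar B(u,r)$, splitting it into $\{f^k_i\ge0\}$ and $\{f^k_i\le0\}$, and we fix the sign of $f^k_i$ so that $f^k_i\ge 0$ on the $T_k$-side; since $\pt T_k\cap\bar B(u,r)\subseteq\bigcup_iZ(f^k_i)$ and $T_k$ is open near $u$, one identifies $\overline{T_k}\cap\bar B(u,r)$ with the chamber $\big(\bigcap_i\cV^{k+}_i\big)\cap\bar B(u,r)$ by a connectedness argument: that chamber is connected, agrees with $T_k$ on a smaller ball, and cannot contain a point of $\pt T_k$ in its interior because each listed $\cV^k_i$ genuinely bounds $T_k$ near $u$. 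I expect this local step — particularly the corner-point case, where several $f^k_i$ vanish at $u$ and one must show that exactly the half-spaces $\{f^k_i\ge0\}$ for $i$ with $f^k_i(u)=0$ occur, none missing and none spurious — to be the main obstacle of the lemma.

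For (iii): suppose $\cV^k_i$ is nonsingular and irredundant at $u\in\bdS$, and take $u_N\in\bdS$ with $u_N\to u$ and $S\cap B(u_N,\eps_N)=\cV^{k+}_i\cap B(u_N,\eps_N)$. Since $\nabla f^k_i(u)\ne0$, after shrinking to a neighbourhood of $u$ we have $\nabla f^k_i(u_N)\ne0$ and $f^k_i(u_N)=0$, so each $u_N$ is a nonsingular point of $\pt S$ at which $f^k_i$ is the only active defining function; thus $\{u_N\}$ exhibits $f^k_i$ as irredundant at $u$ in the sense of Theorem~\ref{thm:neccbsa}, and that theorem gives that $f^k_i$ is quasi-concave at $u$, which — because $\nabla f^k_i(u)\ne0$ — is precisely the statement that $\cV^k_i=Z(f^k_i)$ has nonnegative curvature at $u$. (Alternatively one repeats the proof of Theorem~\ref{thm:neccbsa}(c) directly: convexity of $S$ near $u_N$ together with a second-order Taylor expansion along the supporting hyperplane yields $-v^{T}\nabla^2 f^k_i(u_N)v\ge0$ for all $v\perp\nabla f^k_i(u_N)$, and then the limiting argument there, valid since $\nabla f^k_i(u)\ne0$, passes this to the limit.)

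For (iv): with $I(\cV^k_i)=(f^k_i)$ and $f^k_i$ irreducible, the nonsingular points of $\cV^k_i$ in the sense of the definition preceding the lemma are exactly the points where $\nabla f^k_i\ne0$, so the singular locus is $\cV^k_i\cap Z\big(\pt f^k_i/\pt x_1,\dots,\pt f^k_i/\pt x_n\big)$, a subvariety of $\cV^k_i$. Because $\re$ has characteristic zero and $f^k_i$ is nonconstant and irreducible, some $\pt f^k_i/\pt x_j$ is a nonzero polynomial of degree strictly less than $\deg f^k_i$, hence not divisible by $f^k_i$ and not identically zero on the irreducible variety $\cV^k_i$; therefore the singular locus is a proper subvariety of $\cV^k_i$, so has strictly smaller dimension, and its complement — the set of nonsingular points — is Zariski open and dense in $\cV^k_i$. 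This is the standard fact, and I would simply cite \cite{BCR,CLO}.
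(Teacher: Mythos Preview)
Your proposal is essentially correct and matches the paper closely on (iii) and (iv): the paper likewise reduces (iii) to item~(c) of Theorem~\ref{thm:neccbsa} and simply cites Proposition~3.3.14 of \cite{BCR} for (iv). The one genuine difference is in (ii). The paper does not argue from the Zariski closure and the UFD structure of $\re[x]$ as you do; instead it observes that each $T_k$ is an \emph{algebraic interior} in the sense of \cite{HV} (a connected component of $\{g^k_1\cdots g^k_{m_k}>0\}$) and invokes Lemma~2.1 of \cite{HV}, which supplies a minimum-degree defining polynomial $\tilde g^k$ whose irreducible factors give the $\cV^k_i$ and handles the nondegeneracy condition $\cV^k_i\cap\pt T_k\nsubseteq(\cup_{j\ne i}\cV^k_j)\cap\pt T_k$ automatically. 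This packages exactly the work you sketch --- in particular your UFD/principal-ideal step --- into a single citation, and also sidesteps your concern about possible lower-dimensional components of the Zariski closure. Your direct route is perfectly viable but, as you anticipated, leaves more to check at corner points; the paper's proof of the local identity \reff{eq:locdef} is no more detailed than your connectedness sketch.

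On (i): you are more careful than the paper, which simply refers back to Theorem~\ref{thm:neccbsa}(a) and does not address why $\pt\overline{T_k}=\pt T_k$. Your regular-open trick is the right idea, but note a small circularity: after writing $\mathrm{int}(\overline{T_k})$ as a finite union of basic open pieces and relabeling, the new pieces need not themselves be regular open. This is harmless for the rest of the lemma (only the $(n-1)$-dimensional structure of $\pt T_k$ is used downstream), but if you want the statement literally you should either iterate or argue that one can choose the basic open decomposition so that each piece is regular open.
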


In the above lemma, the irreducible varieties $\cV^k_i$
are called the {\it intrinsic varieties} of $\bdS$,
and the corresponding polynomials $f^k_i$ are
called the {\it intrinsic polynomials} of $S$.
Note that every $\cV^k_i$ is a hypersurface.
The intrinsic $\cV^k_i$ is called {\it irredundant}
if it is irredundant at every $u\in \bdS \cap \pt T_k \cap \cV^k_i$.
$\cV^k_i$ is called {\it redundant}
at $u\in \bdS \cap \pt T_k \cap \cV^k_i$
if it is not irredundant at $u$.
The set $\mc{B}=\{ \cV^k_i:1\leq k\leq m, 1 \leq i\leq L_k\}$
of irreducible varieties in (ii) above is called
a {\it boundary sheet} of $S$.
We remark that the boundary sheet $\mc{B}$ of $S$ is not unique.

\begin{exm} \label{exm:bdsheet}
Consider the compact convex set
$S=\{x\in \bar B(0,1): x_2\geq x_1^2 \, \text{ or } \, x\in \re^2_+ \}$.
Define irreducible varieties $\cV^k_i$ as follows
{\small
\[
\cV^1_1=\left\{x\in \re^n: 1-\|x\|^2 =0 \right\}, \ \
\cV^1_2=\left\{x\in \re^n: x_2-x_1^2 =0 \right\}
\]
\[
\cV^2_1=\cV^1_1, \ \
\cV^2_2=\left\{x\in \re^n: x_2 =0 \right\}, \ \
\cV^2_3(a)=\left\{x\in \re^n: x_1-a x_2^2 =0 \right\} \, (0\leq a\leq 1).
\]
}
They are the intrinsic varieties of $\bdS$.
For any $0\leq a\leq 1$,
$
\mathcal{B}(a) = \left\{ \cV^1_1,\cV^1_2,\cV^2_1,\cV^2_2,\cV^2_3(a)\right\}
$
is a boundary sheet of $S$. It is not unique.
$\cV^1_1,\cV^1_2,\cV^2_1,\cV^2_2$ are all irredundant,
while $\cV^2_3(a)$ is redundant at the origin.
\end{exm}

\medskip

\begin{proof}[Proof of Lemma~\ref{lem:convAll}]
Note that $S$ is the closure of its interior.
Pick any point  $u \in \bdS$ and
pick an interior point $o$ to $S$.
The interior points of the interval joining
$o$ to $u$ must lie in the interior of $S$ and
can approach its vertex $u$.

(i) This is the claim (a) of Theorem \ref{thm:neccbsa}.

\smallskip
(ii)
$T_k$ is a component of
$$ \check T_k:= \{ x: g^k(x) >0 \}$$
where $g^k:= g^k_1 g^k_2  \cdots g^k_{m_k}$
and is what \cite{HV} calls an algebraic interior.
In other words, any bounded basic open semialgebraic set
is an algebraic interior.
Lemma 2.1 of \cite{HV} now tells us
that a minimum degree defining  polynomial $\tg^k$
for $T_k$ is unique up to a multiplicative constant.
Also it says that any other defining polynomial $h$ for $T_k$
equals $p \tg^k$ for some polynomial $p$.
Thus $\tg^k(v) = 0$ and $\nabla \tg^k(v) = 0$
implies $\nabla h(v) =0$. So the singular points of $h$
on $\pt T_k$ contain the singular points of $\pt T_k$.
Lemma 2.1 of \cite{HV}
characterizes the boundary of algebraic interiors.
The third and fourth paragraphs in the proof of Lemma 2.1 of \cite{HV}
show that the Zariski closure of $\pt T_k$
is a union of irreducible varieties $\cV^k_i$ each
of dimension $n-1$ which satisfy all requirements of
(ii) except equation \reff{eq:locdef}.
Without loss of generality, the sign of $f^k_i$ can be chosen
such that $f^k_i(x)$ is nonnegative on $T_k$.
When every $\cV^k_i$ is nonsingular at $u \in \pt T_k \cap \cV^k_i$,
there exists $r>0$ small enough such that
every $\cV^k_i$ is a smooth hypersurface
on $\bar B(u,r)$.
So on $\bar B(u,r)$, a point $v$ is on the boundary of $T_k$
if and only if all $f^k_i(v)\geq 0$ and at least one $f^k_i(v)=0$;
on the other hand,
 $v$ is in the interior of $T_k$
if and only if all $f^k_i(v)>0$.
Therefore  equation \reff{eq:locdef} holds.

\smallskip
(iii) This is implied by item (c) of Theorem~\ref{thm:neccbsa}.

\smallskip
(iv) The $\cV_i$ above are irreducible algebraic varieties.
Thus by Proposition~3.3.14 of \cite{BCR} the desired conclusions on
the nonsingular points follows.
\end{proof}

\medskip

In terms of intrinsic varieties,
our main result about SDP representability is

\begin{thm}  \label{thm:acsvar}
Let $S$ be a compact convex semialgebraic set with
nonempty interior, and
$\mc{B}=\{ \cV^k_i:1\leq k\leq m, 1 \leq i\leq L_k\}$
be a boundary sheet of $S$
as guaranteed by Lemma~\ref{lem:convAll}.
Assume every hypersurface $\cV^k_i$ in $\mc{B}$ is nonsingular on
$\cV^k_i \cap \bdS$,
and has positive curvature at $u \in \cV^k_i \cap \bdS$ whenever
$\cV^k_i$ is redundant at $u$.
Then $S$ is SDP representable if (resp. only if)
for each $u\in \bdS\cap \cV^k_i$ the hypersurface $\cV^k_i$
has positive (resp. nonnegative)
curvature at $u$.
\end{thm}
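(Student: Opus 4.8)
The plan is to reduce Theorem~\ref{thm:acsvar} to the already-established results on defining polynomials, namely Theorem~\ref{thm:cvnbsa} (sufficiency) and Theorem~\ref{thm:neccbsa} (necessity), by using the local description of $\overline{S}$ near its boundary provided by Lemma~\ref{lem:convAll}(ii). The key observation is that ``positive curvature of the hypersurface $\cV^k_i$ at $u$'' is precisely ``strict quasi-concavity of the intrinsic polynomial $f^k_i$ at $u$'' (both are the condition $-v^T\nabla^2 f^k_i(u)v>0$ for all $0\neq v\in\nabla f^k_i(u)^\perp$, using that $\cV^k_i$ is nonsingular at $u$ so that $\nabla f^k_i(u)\neq 0$); likewise nonnegative curvature matches quasi-concavity. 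So the theorem is really a translation of the earlier polynomial-based statements into intrinsic-variety language, and the work is in checking the dictionary carefully.

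For the sufficiency direction: fix $u\in\bdS$. By Lemma~\ref{lem:convAll}(ii), since every $\cV^k_i$ through $u$ is nonsingular at $u$, for $r>0$ small enough
\[
\overline{T_k}\cap\bar B(u,r) = \Big(\bigcap_{1\le i\le L_k}\cV^{k+}_i\Big)\cap\bar B(u,r),
\]
so near $u$ the set $S=\cup_k T_k$ is expressed using exactly the intrinsic polynomials $f^k_i$ (together with the ball constraint $r^2-\|x-u\|^2\ge 0$), and each $T_k$ has nonempty interior near $u$ because $T_k$ is the closure of a bounded basic open semialgebraic set by Lemma~\ref{lem:convAll}(i). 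Now I split the active intrinsic polynomials at $u$ into two groups: those $\cV^k_i$ that are irredundant at $u$ and those that are redundant at $u$. For the irredundant ones, the hypothesis gives positive curvature at $u$, i.e.\ $f^k_i$ is strictly quasi-concave at $u$. For the redundant ones, the standing assumption of the theorem again gives positive curvature at $u$, hence strict quasi-concavity. Either way every active $f^k_i$ is strictly quasi-concave at $u$ (in fact none of them needs to be sos-concave). Applying Theorem~\ref{thm:cvnbsa} to the local pieces $\overline{T_k}\cap\bar B(u,r)$ — or more directly invoking Proposition~\ref{pro:cvxsdpcov} together with Theorem~\ref{thm:qscv} and Theorem~\ref{thm:mUsdp} exactly as in the proof of Theorem~\ref{thm:cvnbsa} — shows $S\cap\bar B(u,\dt)$ is SDP representable for some $\dt>0$, and then Proposition~\ref{pro:cvxsdpcov} gives that $S$ is SDP representable.

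For the necessity direction: suppose $S$ is SDP representable and fix $u\in\bdS\cap\cV^k_i$; I must show $\cV^k_i$ has nonnegative curvature at $u$, i.e.\ $f^k_i$ is quasi-concave at $u$. If $\cV^k_i$ is redundant at $u$ this already follows from the standing hypothesis (positive curvature implies nonnegative). If $\cV^k_i$ is irredundant at $u$, I want to apply Theorem~\ref{thm:neccbsa}(c). The point is that irredundancy of the variety $\cV^k_i$ at $u$ in the sense of Lemma~\ref{lem:convAll}(iii) — existence of $u_N\to u$ in $\bdS$ with $S\cap B(u_N,\eps_N)=\cV^{k+}_i\cap B(u_N,\eps_N)$ — forces the $u_N$ to be nonsingular points at which $f^k_i$ is irredundant and nonsingular in the sense of Theorem~\ref{thm:neccbsa}; indeed near such $u_N$ the set $S$ is cut out by the single inequality $f^k_i\ge 0$, and $\nabla f^k_i(u_N)\neq 0$ since $\cV^k_i$ is nonsingular on $\cV^k_i\cap\bdS$. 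So Theorem~\ref{thm:neccbsa}(c) gives $-v^T\nabla^2 f^k_i(u_N)v\ge 0$ on $\nabla f^k_i(u_N)^\perp$ for each $N$; passing to the limit $u_N\to u$ using the matrix-projection continuity argument already in the proof of Theorem~\ref{thm:neccbsa} (write the condition as $-R(u_N)^T\nabla^2 f^k_i(u_N)R(u_N)\succeq 0$ with $R$ the projector onto $\nabla f^k_i^\perp$, and use $\nabla f^k_i(u)\neq 0$ so $R(u_N)\to R(u)$) yields quasi-concavity of $f^k_i$ at $u$.

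The main obstacle I expect is the careful matching of the \emph{two different notions of irredundancy} — irredundancy of an intrinsic variety $\cV^k_i$ at $u$ (Lemma~\ref{lem:convAll}(iii)), versus irredundancy of a defining polynomial $g^k_i$ at $u$ with respect to $\bdS$ (the ``nonsingular points $u_N\to u$ in $Z(g^k_i)\cap\bdS$'' definition used in Theorem~\ref{thm:neccbsa}) — and verifying that a boundary sheet's intrinsic polynomials really do furnish, near each boundary point, a valid local defining system to which Theorems~\ref{thm:cvnbsa} and \ref{thm:neccbsa} apply. This hinges entirely on Lemma~\ref{lem:convAll}(ii)'s equation \reff{eq:locdef} and the nonsingularity hypothesis, plus the fact (also from Lemma~\ref{lem:convAll}) that $f^k_i$ can be taken nonnegative on $T_k$ so that the sign conventions in the curvature/quasi-concavity definitions agree; once that bookkeeping is pinned down, the proof is a direct citation of the earlier theorems with essentially no new computation.
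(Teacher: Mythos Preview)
Your proposal is correct and follows essentially the same route as the paper. For sufficiency the paper does exactly what you describe: use Proposition~\ref{pro:cvxsdpcov} to localize, invoke equation~\reff{eq:locdef} from Lemma~\ref{lem:convAll}(ii) to get the local description by intrinsic polynomials, note that positive curvature plus nonsingularity gives strict quasi-concavity of each active $f^k_i$ near $u$, and then apply Theorem~\ref{thm:qscv} to each $\overline{T_k}\cap\bar B(u,\dt)$ followed by Theorem~\ref{thm:mUsdp}; your irredundant/redundant case split is harmless but unnecessary, since the sufficiency hypothesis already asserts positive curvature at \emph{every} $u\in\bdS\cap\cV^k_i$. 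For necessity the paper simply cites Lemma~\ref{lem:convAll}(iii), whose one-line proof in turn cites Theorem~\ref{thm:neccbsa}(c); your argument unpacks this citation chain explicitly (including the projector $R(\cdot)$ limiting step), which is fine and arguably clearer, but is the same proof.
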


\begin{proof}
The necessary side is (iii) of Lemma~\ref{lem:convAll}.
Let us prove the sufficient side.
By Proposition~\ref{pro:cvxsdpcov},
it suffices to show that
for every $u\in \bdS$ there exists $\dt>0$
such that $S\cap \bar B(u,\dt)$ is SDP representable.
Let $\cV^k_i$ and $f^k_i$ be given by Lemma~\ref{lem:convAll}.
Fix an arbitrary point $u\in \bdS$
and let $I_k(u)=\{1\leq i\leq L_k: \, u\in \cV^k_i\}$.
By the assumption of nonsingularity of $\cV^k_i$ on $\cV^k_i\cap \bdS$
and equation~\reff{eq:locdef} in Lemma~\ref{lem:convAll},
there is some $\dt >0$ small enough such that
\[
S \cap \bar B(u,\dt) = \bigcup_{k=1}^m \overline{T_k} \cap \bar B(u,\dt)
\]
\[
\overline{T_k} \cap \bar B(u,\dt) = \left\{x\in\re^n:\, f^k_i(x) \geq 0,\,
\forall i\in I_k(u),\, \dt^2 - \|x-u\|^2 \geq 0\right\}.
\]
Note that $f^k_i$ are irreducible polynomials
and nonsingular (their gradients do not vanish) on $\cV^k_i\cap \bdS$.
So the positive curvature hypothesis implies that
each $f^k_i(x)$ is strictly quasi-concave on $\bar B(u,\dt)$
(we can choose $\dt >0$ small enough to make this true).
Obviously $T_k \cap \bar B(u,\dt)$ is a bounded set.
By Theorem~\ref{thm:qscv} and Theorem~\ref{thm:mUsdp},
we know $S\cap \bar B(u,\dt)$ is SDP representable.
\end{proof}

In terms of intrinsic polynomials,
the above theorem can be reformulated as

\begin{thm}  \label{thm:intrpoly}
Let $S$ be a compact convex semialgebraic set with
nonempty interior, and
$f^k_i\,(1\leq k\leq m, 1 \leq i\leq L_k)$
be intrinsic polynomials of $S$
as guaranteed by Lemma~\ref{lem:convAll}.
Assume every $f^k_i$ is nonsingular on $Z(f^k_i) \cap \bdS$,
and strict quasi-concave at $u\in Z(f^k_i) \cap \bdS$ whenever
$f^k_i$ is redundant at $u$.
Then $S$ is SDP representable if (resp. only if)
for each $u\in \bdS$ and $f^k_i$ satisfying $f^k_i(u)=0$
the intrinsic polynomial $f^k_i$
is strictly quasi-concave (resp. non-strictly quasi-concave )
at $u$.
\end{thm}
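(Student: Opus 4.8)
The plan is to show that Theorem \ref{thm:intrpoly} is essentially a restatement of Theorem \ref{thm:acsvar} via the dictionary established in \S 3.1 between the curvature of a nonsingular hypersurface and the strict (resp.\ non-strict) quasi-concavity of a defining polynomial for it. First I would recall from Lemma~\ref{lem:convAll}(ii) that the intrinsic polynomials $f^k_i$ are precisely the irreducible polynomials generating the ideals of the intrinsic varieties $\cV^k_i$, with sign chosen so that $f^k_i \geq 0$ on $T_k$; hence $Z(f^k_i) = \cV^k_i$ and, on $\bdS$, the local superlevel set $\{f^k_i \geq 0\}$ agrees with $\cV^{k+}_i$ as in equation~\reff{eq:locdef}. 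Under the standing hypothesis that each $f^k_i$ is nonsingular on $Z(f^k_i)\cap\bdS$, the remark following equation~\reff{eq:Bf} in \S 3.1 shows the second fundamental form of $\cV^k_i$ at a point $u$ is positive definite (resp.\ positive semidefinite) if and only if $-v^T\nabla^2 f^k_i(u)v > 0$ (resp.\ $\geq 0$) for all $0\neq v\in\nabla f^k_i(u)^\perp$, which is exactly the definition of $f^k_i$ being strictly quasi-concave (resp.\ quasi-concave) at $u$ given in \S 3.1.

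With this translation in hand, each clause of Theorem~\ref{thm:intrpoly} maps onto the corresponding clause of Theorem~\ref{thm:acsvar}: ``$\cV^k_i$ nonsingular on $\cV^k_i\cap\bdS$'' becomes ``$f^k_i$ nonsingular on $Z(f^k_i)\cap\bdS$''; ``$\cV^k_i$ has positive curvature at $u$ whenever it is redundant at $u$'' becomes ``$f^k_i$ is strictly quasi-concave at $u$ whenever $f^k_i$ is redundant at $u$'' (here one also notes that the notion of $\cV^k_i$ being redundant at $u$, as defined before Example~\ref{exm:bdsheet}, is literally the notion of $f^k_i$ being redundant at $u$, since the two varieties and their local superlevel sets coincide); and the sufficient (resp.\ necessary) conclusion ``$\cV^k_i$ has positive (resp.\ nonnegative) curvature at each $u\in\bdS\cap\cV^k_i$'' becomes ``$f^k_i$ is strictly quasi-concave (resp.\ quasi-concave) at each such $u$.'' So I would simply invoke Theorem~\ref{thm:acsvar} after verifying each equivalence, giving a two-paragraph proof.

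The one point requiring a little care — and the place I expect the only real friction — is checking that the curvature-to-quasi-concavity dictionary of \S 3.1 applies \emph{uniformly along all of} $Z(f^k_i)\cap\bdS$ (not just at a single isolated nonsingular point), and in particular that when we pass from ``curvature'' (an intrinsic, sign-free notion for the hypersurface) to ``quasi-concavity'' (which depends on the sign of the defining polynomial), the sign convention $f^k_i\geq 0$ on $T_k$ is the consistent one. This is where I would lean on the fact, noted in Lemma~\ref{lem:convAll}(ii), that the sign of $f^k_i$ is chosen so that $f^k_i$ is nonnegative on $T_k$, together with the observation in \S 3.1 (just after equation~\reff{eq:Bf}) that the local inclusion $\{f\geq 0\} = \{g\geq 0\}$ on a ball forces the two gradients to be \emph{positive} scalar multiples of each other, so the second fundamental forms of $f^k_i$ and of the abstract hypersurface $\cV^k_i$ (oriented by the outward normal from $T_k$) have the same sign. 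Once that sign bookkeeping is settled, there is nothing further to prove: the two theorems are word-for-word translations of one another, and both the ``if'' and ``only if'' directions transfer immediately from Theorem~\ref{thm:acsvar}.
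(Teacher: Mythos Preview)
Your proposal is correct and takes essentially the same approach as the paper. The paper gives no separate proof of Theorem~\ref{thm:intrpoly} at all: it simply introduces it with the sentence ``In terms of intrinsic polynomials, the above theorem can be reformulated as,'' treating it as a direct restatement of Theorem~\ref{thm:acsvar} via the curvature/quasi-concavity dictionary of \S 3.1; your plan just makes that translation explicit, including the sign bookkeeping, which is exactly the right thing to do.
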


\medskip
\noindent
{\it Remarks:}
(i) In the above two theorems,
we assume intrinsic varieties (resp. intrinsic polynomials) are
positively curved (resp. strictly quasi-concave)
on the part of the boundary
where they are redundant.
This assumption is reasonable,
because redundant intrinsic varieties (resp. intrinsic polynomials)
are usually not unique
and there is a freedom of choosing them.
(ii) As mentioned in the introduction,
 under the nonsingularity assumption,
the gap between sufficient and necessary conditions
is the intrinsic varieties being
positively curved versus nonnegatively curved
or the intrinsic polynomials being
strictly quasi-concave versus nonstrictly quasi-concave.
A case bypassing the gap is the intrinsic polynomials
being sos-concave, as shown in Theorem~\ref{thm:cvnbsa}.
Thus, in Example~\ref{exm:bdsheet},
we know the compact set there is SDP representable.
(iii) In Theorems~\ref{thm:acsvar} and \ref{thm:intrpoly},
to prove the necessary conditions, we have only used the
convexity of $S$ and its nonempty interior,
instead of the SDP representability of $S$.
Thus the necessary conditions in Theorems~\ref{thm:acsvar} and \ref{thm:intrpoly}
are still true
when $S$ is a convex semialgebraic set with nonempty interior.

\bigskip
\section{Convex hulls of nonconvex semialgebraic sets} \label{sec:ncvx}
\setcounter{equation}{0}

In this section, we consider the problem
of finding the convex hull of a nonconvex semialgebraic set $T$.
The convex hull $\cv{T}$
must be convex and semialgebraic (Theorem~2.2.1 in \cite{BCR}).
By Theorem~2.7.2 in \cite{BCR},
the closure of $\cv{T}$ is a union of basic closed semialgebraic sets.
A fundamental problem
in convex geometry and semidefinite programming
is to find the SDP representation of $\cv{T}$.
This section will address this problem and
prove the sufficient conditions and necessary conditions
for the SDP representability of $\cv{T}$ summarized
in the Introduction.

\medskip

Let $T$ be a compact nonconvex set with boundary $\pt T$.
Obviously $\cv{T}$ is the convex hull of the boundary $\pt T$.
Some part of $\pt T$ might be in the interior of $\mbox{conv}(T)$
and will not contribute to $\mbox{conv}(T)$.
So we are motivated to define the {\it convex boundary} $\pt_c T$ of $T$ as
\begin{align} \label{convbdry}
\pt_cT = \left\{u\in T:\, \ell^T u = \min_{x\in T} \ell^Tx \, \text{ for some } \, \,
\ell\in \re^n \, \text{ with } \, \|\ell\| =1 \right\} \subseteq \pt T.
\end{align}
Geometrically, $\pt_cT$ is the maximum subset of $\pt T$
contained in $\pt\cv{T}$,
and the convex hull of $\pt_cT$ is still $\cv{T}$.

\begin{pro} \label{prop:cvxbdy}
If $T$ is compact,
then $\cv{\pt_cT}=\cv{T}$
and $\pt_cT$ is also compact.
\end{pro}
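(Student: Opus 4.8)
The plan is to prove the two assertions separately, both by elementary compactness and convexity arguments. For the equality $\cv{\pt_cT}=\cv{T}$, the inclusion $\cv{\pt_cT}\subseteq\cv{T}$ is immediate since $\pt_cT\subseteq T$. For the reverse inclusion, since $\cv{\pt_cT}$ is convex it suffices to show $T\subseteq\cv{\pt_cT}$, and in fact it suffices to show every extreme point of $\cv{T}$ lies in $\pt_cT$: by the Krein--Milman theorem (or Minkowski's theorem in $\re^n$), $\cv{T}=\cv{\mathrm{ext}\,\cv{T}}$, and $\cv{T}$ is compact because $T$ is compact (the convex hull of a compact set in $\re^n$ is compact, by Carath\'eodory's theorem together with compactness of the simplex $\Dt_{n+1}$). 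So the core step is: each extreme point $u$ of $\cv{T}$ belongs to $\pt_cT$.

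To see this, let $u$ be an extreme point of $K:=\cv{T}$. A standard fact is that every extreme point of the convex hull of a compact set $T$ already lies in $T$ itself (if $u\notin T$, then writing $u=\sum_{i=1}^{n+1}\lmd_i x_i$ with $x_i\in T$ via Carath\'eodory, at least two of the $x_i$ with positive weight differ from $u$, contradicting extremality). So $u\in T$. Next, since $u$ is an extreme point of the compact convex set $K$, it is in particular an exposed limit; more directly, I will use that $u\in\pt K$ and pick a supporting hyperplane: there is $\ell\in\re^n$, $\|\ell\|=1$, with $\ell^Tu=\min_{x\in K}\ell^Tx$. Since $T\subseteq K$, this gives $\ell^Tu\le\ell^Tx$ for all $x\in T$, and $u\in T$, so $\ell^Tu=\min_{x\in T}\ell^Tx$. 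By the definition \reff{convbdry}, $u\in\pt_cT$. This yields $\mathrm{ext}\,K\subseteq\pt_cT$, hence $K=\cv{\mathrm{ext}\,K}\subseteq\cv{\pt_cT}\subseteq K$, so $\cv{\pt_cT}=K=\cv{T}$.

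Finally, for compactness of $\pt_cT$: boundedness is clear since $\pt_cT\subseteq T$ and $T$ is bounded, so it remains to show $\pt_cT$ is closed. Take a sequence $u_N\in\pt_cT$ with $u_N\to u$; by definition each $u_N$ has a unit vector $\ell_N$ with $\ell_N^Tu_N=\min_{x\in T}\ell_N^Tx$. Since the unit sphere is compact, pass to a subsequence with $\ell_N\to\ell$, $\|\ell\|=1$. For any fixed $x\in T$ we have $\ell_N^Tu_N\le\ell_N^Tx$; letting $N\to\infty$ gives $\ell^Tu\le\ell^Tx$, so $\ell^Tu=\min_{x\in T}\ell^Tx$, and $u\in T$ since $T$ is closed. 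Hence $u\in\pt_cT$, so $\pt_cT$ is closed and therefore compact.

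The only mildly delicate point is the assertion that extreme points of $\cv{T}$ lie in $T$ and admit supporting hyperplanes witnessing membership in $\pt_cT$; everything else is routine compactness. I expect the main obstacle (if any) is merely bookkeeping: making sure one invokes the right form of Carath\'eodory/Krein--Milman so that finitely many points and a compact simplex guarantee compactness of $\cv{T}$, and being careful that the supporting functional for $K$ automatically serves as the minimizing functional over $T$.
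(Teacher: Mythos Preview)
Your proof is correct, but it takes a genuinely different route from the paper's.

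For the equality $\cv{\pt_cT}=\cv{T}$, the paper argues by contraposition and separation: if $u\notin\cv{\pt_cT}$, a separating hyperplane gives a unit $\ell$ with $\ell^Tu<\ell^Tx$ for all $x\in\pt_cT$; but the minimizer $v$ of $\ell^T(\cdot)$ over $T$ lies in $\pt_cT$ by definition, so $\ell^Tu<\ell^Tv=\min_{x\in T}\ell^Tx$, forcing $u\notin\cv{T}$. This avoids Krein--Milman entirely and does not need to know that extreme points of $\cv{T}$ lie in $T$. Your approach instead identifies the structural fact $\mathrm{ext}\,\cv{T}\subseteq\pt_cT$ and then invokes Minkowski/Krein--Milman. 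Both are short; the paper's argument is marginally more elementary (only separation, no extreme-point machinery), while yours gives a bit more information, namely that every extreme point of $\cv{T}$ already sits in $\pt_cT$.

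For compactness of $\pt_cT$, the paper simply asserts that $\pt_cT$ is bounded and closed ``by its definition''; your sequential argument with a convergent subsequence of unit vectors $\ell_N$ is the honest verification of that claim and is exactly what one would write if pressed to justify it.
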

\begin{proof}
Obviously $\cv{\pt_c(T)}\subseteq \cv{T}$.
We need to prove $\cv{\pt_c(T)}\supseteq \cv{T}$.
It suffices to show that if $u\notin \cv{\pt_cT}$
then $u\notin\cv{T}$.
For any $u\notin\cv{\pt_cT}$, by the Convex Set Separation Theorem,
there is a vector $\ell$ of unit length and a positive number $\dt >0$ such that
\[
\ell^T u <  \ell^Tx - \dt, \ \ \forall \, x \in \cv{\pt_cT}.
\]
Let $v\in T$ minimize $\ell^Tx$ over $T$,
which must exist due to the compactness of $T$.
Then $v\in \pt_cT$ and hence
\[
\ell^T u <  \ell^Tv - \dt = \min_{x\in T} \ell^T x - \dt.
\]
Therefore, $u\notin \cv{T}$.

Clearly $\pt_cT$ is bounded and closed by its
definition.
So $\pt_cT$ is compact.
\end{proof}

\medskip
\noindent
{\it Remark:} If $T$ is not compact, then Proposition~\ref{prop:cvxbdy}
might not be true. For instance, for set $T=\{x\in\re^2: \|x\|^2 \geq 1\}$,
the convex boundary $\pt_c T = \emptyset$, but $\cv{T}$ is the whole space.
When $T$ is not compact, even if $\cv{\pt T}= \cv{T}$, it is still possible that
$\cv{\pt_c T} \ne  \cv{T}$.
As a counterexample, consider the set
\[
W = \{(0,0)\} \cup \{x\in\re_+^2:\, x_1x_2 \geq 1\}.
\]
It can be verified that $\cv{W} = \cv{\pt W}$,
$\pt_c W = \{(0,0)\}$ and $\cv{\pt_c W} \ne \cv{W}$.

\bigskip

Note that every semialgebraic set is a finite union of
basic semialgebraic sets
(Proposition~2.1.8 in \cite{BCR}).
To find the convex hull of a semialgebraic set $T$,
by Theorem~\ref{thm:mUsdp},
it suffices to find the SDP representation of the convex hull of
each basic semialgebraic subset of $T$.

\begin{theorem}
Let $T_1,\cdots,T_m$ be bounded semialgebraic sets.
If each $\cv{T_k}$ is SDP representable, then
the convex hull of $\cup_{k=1}^m T_k$
is also SDP representable.
\end{theorem}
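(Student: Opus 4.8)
The plan is to reduce this statement directly to Theorem~\ref{thm:mUsdp}. The key observation is that $\cv{\bigcup_{k=1}^m T_k} = \cv{\bigcup_{k=1}^m \cv{T_k}}$, since taking the convex hull of each piece first does not change the overall convex hull; this is a standard fact about convex hulls of unions, but I would state it explicitly and give the one-line argument (every point of $\cv{\bigcup_k T_k}$ is a finite convex combination of points, each lying in some $T_k \subseteq \cv{T_k}$, hence lies in $\cv{\bigcup_k \cv{T_k}}$; the reverse inclusion is immediate from $T_k \subseteq \cv{\bigcup_j T_j}$).

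Next I would verify the hypotheses of Theorem~\ref{thm:mUsdp} apply to the sets $W_k := \cv{T_k}$. By assumption each $\cv{T_k}$ is SDP representable, so each $W_k$ admits a representation of the form required in Theorem~\ref{thm:mUsdp}. Each $W_k$ is nonempty because $T_k$ is (implicitly) nonempty, or rather: if some $T_k$ is empty it contributes nothing to the union and can be discarded. Crucially, each $W_k = \cv{T_k}$ is bounded because $T_k$ is bounded, and the convex hull of a bounded set in $\re^n$ is bounded. This is exactly the situation covered by the last assertion of Theorem~\ref{thm:mUsdp}, equation~\reff{eq:u=}.

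Therefore Theorem~\ref{thm:mUsdp} gives $\mc{C} = \cv{\bigcup_{k=1}^m W_k}$ where $\mc{C}$ is the explicitly constructed SDP lift, so $\cv{\bigcup_{k=1}^m W_k} = \cv{\bigcup_{k=1}^m \cv{T_k}} = \cv{\bigcup_{k=1}^m T_k}$ is SDP representable. There is essentially no obstacle here: the only thing to be careful about is confirming that ``bounded'' is preserved under convex hull (true in finite dimensions) and that the identity $\cv{\bigcup_k T_k} = \cv{\bigcup_k \cv{T_k}}$ holds without any convexity or closedness assumptions on the $T_k$ — which it does, by the convex-combination argument above. The whole proof is three or four lines.
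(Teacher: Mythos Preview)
Your proposal is correct and follows essentially the same route as the paper: establish the identity $\cv{\bigcup_k T_k} = \cv{\bigcup_k \cv{T_k}}$ and then invoke Theorem~\ref{thm:mUsdp} with $W_k = \cv{T_k}$, using the boundedness of $T_k$ (hence of $\cv{T_k}$) to get equality~\reff{eq:u=}. Your justification of the set identity differs only cosmetically from the paper's inclusion-chain argument, and you are slightly more explicit about checking the boundedness hypothesis, but the structure is the same.
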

\begin{proof}
By Theorem~\ref{thm:mUsdp}, it suffices to prove that
\[
\cv{ \bigcup_{k=1}^m T_k  } =
\cv{ \bigcup_{k=1}^m  \cv{T_k} }.
\]
Obviously, the left hand side is contained in the right hand side.
We only prove the converse. For every $j= 1,\ldots, m$, we have
\[
\cv{ T_j} \subseteq \cv{ \bigcup_{k=1}^m T_k  }.
\]
Now taking the union of left hand side for $j= 1,\ldots, m$, we get
\[
\bigcup_{j=1}^m \cv{ T_j} \subseteq \cv{ \bigcup_{k=1}^m T_k  }.
\]
Taking the convex hull of the above on both sides results in
\[
\cv{ \bigcup_{j=1}^m \cv{ T_j} } \subseteq \cv{ \bigcup_{k=1}^m T_k  },
\]
which implies the equality at the beginning of this proof.
\end{proof}

\begin{pro}
\label{pro:sdpcover}
Let $T$ be a compact semialgebraic set.
Then $\cv{T}$ is SDP representable if
for every $u\in \pt_cT$, there exists $\dt>0$
such that $\cv{T\cap \bar B(u,\dt) }$ is SDP representable.
\end{pro}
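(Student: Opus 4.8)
The plan is to follow the same pattern as the proof of Proposition~\ref{pro:cvxsdpcov}, but with the convex boundary $\pt_c T$ playing the role of $\pt S$ and with Proposition~\ref{prop:cvxbdy} supplying the crucial reduction. By Proposition~\ref{prop:cvxbdy}, $\pt_c T$ is compact and $\cv{\pt_c T} = \cv{T}$. For each $u \in \pt_c T$ the hypothesis gives a radius $\dt_u > 0$ with $\cv{T \cap \bar B(u,\dt_u)}$ SDP representable; the balls $\{\, B(u,\dt_u) : u \in \pt_c T \,\}$ form an open cover of the compact set $\pt_c T$, so there is a finite subcover $B(u_1,\dt_1), \dots, B(u_L,\dt_L)$.

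Next I would establish the identity
\[
\cv{T} \;=\; \cv{ \bigcup_{k=1}^L \cv{ T \cap \bar B(u_k,\dt_k) } }.
\]
Since $\pt_c T \subseteq T$ and $\pt_c T \subseteq \bigcup_{k=1}^L B(u_k,\dt_k)$, every point of $\pt_c T$ lies in some $T \cap \bar B(u_k,\dt_k)$; hence
\[
\cv{T} \;=\; \cv{\pt_c T} \;\subseteq\; \cv{ \bigcup_{k=1}^L \big( T \cap \bar B(u_k,\dt_k) \big) } \;\subseteq\; \cv{T},
\]
so these sets are all equal. Because $T \cap \bar B(u_k,\dt_k) \subseteq \cv{ T \cap \bar B(u_k,\dt_k) }$ for each $k$ and the convex hull operator is monotone and idempotent, the middle set above also equals $\cv{ \bigcup_{k=1}^L \cv{ T \cap \bar B(u_k,\dt_k) } }$ --- this is precisely the elementary argument used in the theorem immediately preceding this proposition.

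Finally, each $W_k := \cv{ T \cap \bar B(u_k,\dt_k) }$ is SDP representable by hypothesis and bounded (since $T$ is compact, $T \cap \bar B(u_k,\dt_k)$ is bounded, and so is its convex hull), so Theorem~\ref{thm:mUsdp} applies and shows $\cv{ \bigcup_{k=1}^L W_k }$ is SDP representable; by the identity above this set is $\cv{T}$, which finishes the proof. I do not anticipate a genuine difficulty here; the one point requiring care is conceptual, namely recognizing that it suffices to cover only $\pt_c T$ --- not all of $\pt T$, let alone $T$ --- which is exactly what Proposition~\ref{prop:cvxbdy} buys us, together with checking that the boundedness hypothesis of Theorem~\ref{thm:mUsdp} (needed for the genuine equality, rather than merely equality of closures, of the constructed lift with the convex hull) is indeed met by compactness of $T$.
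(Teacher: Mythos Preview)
Your proposal is correct and follows essentially the same approach as the paper: use Proposition~\ref{prop:cvxbdy} to reduce to the compact set $\pt_cT$, extract a finite subcover by open balls, establish the identity $\cv{T}=\cv{\bigcup_{k=1}^L \cv{T\cap \bar B(u_k,\dt_k)}}$ via the chain of inclusions, and then apply Theorem~\ref{thm:mUsdp} to the bounded SDP representable sets $\cv{T\cap \bar B(u_k,\dt_k)}$. The paper's chain of inclusions passes through $\pt_cT\cap \bar B(u_k,\dt_k)$ rather than $T\cap \bar B(u_k,\dt_k)$, but this is an immaterial difference.
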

\begin{proof}
Suppose for every $u\in \pt_cT$
the set $\cv{T\cap \bar B(u,\dt_u)}$ has SDP representation for some $\dt_u >0$.
Note that $\{B(u,\dt_u):\,u\in \pt_cT\}$ is an open cover
of the compact set $\pt_cT$.
So there are a finite number of balls, say,
$B(u_1,\dt_1),\cdots, B(u_L,\dt_L)$, to cover $\pt_cT$.
Noting
\[
\cv{\pt_cT} \subseteq \cv{ \bigcup_{k=1}^L \pt_cT\cap \bar B(u_k,\dt_k) }
\subseteq \cv{ \bigcup_{k=1}^L \cv{T\cap \bar B(u_k,\dt_k)} } \subseteq  \cv{T},
\]
by Proposition~\ref{prop:cvxbdy}, we have
\[
\cv{T}=
\cv{ \bigcup_{k=1}^L \cv{T\cap \bar B(u_k,\dt_k)} }.
\]
The sets $\cv{T\cap \bar B(u_k,\dt_k)}$ are all bounded.
By Theorem~\ref{thm:mUsdp},
we know $\cv{T}$ is SDP representable.
\end{proof}

\noindent
{\it Remark}: By Proposition \ref{pro:sdpcover},
to find the SDP representation of
the convex hull of a compact set $T$,
it is sufficient to find the
SDP representations of convex hulls of the intersections of
$T$ and small balls near the convex boundary $\pt_cT$.
This gives the bridge between
the global and local SDP representations
of convex hulls.

\medskip

In the following two subsections,
we prove some sufficient conditions and necessary conditions
for the SDP representability of convex hulls.
They are essentially generalizations of
Section~\ref{sec:cvbsa} and the results in \cite{HN1}.

\subsection{Sos-concavity or quasi-concavity conditions}

In Section~\ref{sec:cvbsa},
we have proven some sufficient conditions
and necessary conditions
for the SDP representability of compact convex sets.
In this subsection, we prove similar conditions
for the convex hulls of nonconvex sets.
Throughout this subsection,
consider the semialgebraic sets which have nonempty interior
(then there are no equality defining polynomials).
We begin with basic semialgebraic sets, and then consider
more general semialgebraic sets.

\begin{theorem}
\label{thm:cvhbcsasdp}
Assume $T=\{x\in\re^n:\, f_1(x)\geq 0, \cdots, f_m(x) \geq 0\}$
is a compact set defined by polynomials $f_i(x)$
and has nonempty interior near $\pt_c T$, i.e.,
for every $u\in \pt_c T$ and $\dt >0$ small enough,
there exists $v \in B(u,\dt)$ such that
$f_i(v)>0$ for all $i=1,\ldots,m$.
If for each $u\in \pt_cT$ and $i$ for which $f_i(u)=0$,
$f_i(x)$ is either sos-concave or
strictly quasi-concave at $u$,
then $\cv{T}$ is SDP representable.
\end{theorem}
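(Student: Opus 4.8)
The plan is to mimic the proof of Theorem~\ref{thm:cvnbsa}, the one genuinely new ingredient being that $T$ must first be shown convex near its convex boundary. By Proposition~\ref{pro:sdpcover} it suffices to produce, for each $u\in\pt_cT$, a radius $\dt>0$ for which $\cv{T\cap\bar B(u,\dt)}$ is SDP representable. So fix such a $u$ and let $I(u)=\{i : f_i(u)=0\}$ be the active index set. Since $f_i(u)>0$ for $i\notin I(u)$, I would choose $\dt>0$ small enough that $f_i>0$ on $\bar B(u,\dt)$ for all $i\notin I(u)$, so that
\[
T\cap\bar B(u,\dt)=\{x\in\re^n : f_i(x)\ge 0\ (i\in I(u)),\ \dt^2-\|x-u\|^2\ge 0\}.
\]
For each $i\in I(u)$ that is not sos-concave, $f_i$ is strictly quasi-concave at $u$, and strict positivity of the second fundamental form persists in a neighborhood of $u$ (the same continuity argument used in the proofs of Theorems~\ref{thm:bcsa} and \ref{thm:cvnbsa}); shrinking $\dt$, I may assume every such $f_i$ is strictly quasi-concave on all of $\bar B(u,\dt)$.

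Next I would show $T\cap\bar B(u,\dt)$ is convex for $\dt$ small. For each $i\in I(u)$ the super-level set $\{x : f_i(x)\ge 0\}$ is locally convex at $u$: if $f_i$ is sos-concave then it is concave, so $\{f_i\ge 0\}$ is convex outright; if $f_i$ is strictly quasi-concave at $u$ then, by the curvature discussion of \S\ref{sec:cvbsa} (positive curvature of $Z(f_i)$ means $Z(f_i)$ is locally the graph of a strictly convex function with $\{f_i\ge 0\}$ lying above it), the set $\{f_i\ge 0\}$ is convex in some ball about $u$. Taking $\dt$ smaller than the radius of each of these balls, $\{f_i\ge 0\}\cap\bar B(u,\dt)$ is convex for every $i\in I(u)$, hence so is the finite intersection $T\cap\bar B(u,\dt)$. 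By the hypothesis that $T$ has nonempty interior near $\pt_cT$, there is a point $v\in B(u,\dt)$ with $f_i(v)>0$ for all $i$, so this convex set also has nonempty interior.

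Finally, $T\cap\bar B(u,\dt)$ is a compact convex set with nonempty interior whose defining polynomials $f_i\ (i\in I(u))$ together with $\dt^2-\|x-u\|^2$ are each either sos-concave (the ball constraint has constant negative Hessian $2I_n$) or strictly quasi-concave on the set. Theorem~\ref{thm:qscv} then shows $T\cap\bar B(u,\dt)$ is SDP representable, and being convex it coincides with $\cv{T\cap\bar B(u,\dt)}$; Proposition~\ref{pro:sdpcover} completes the proof.

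I expect the convexity step in the second paragraph to be the main obstacle: turning the pointwise hypotheses on the active $f_i$ (sos-concavity, or strict quasi-concavity \emph{at $u$} only) into genuine local convexity of $T$ near $\pt_cT$. This is precisely where the sign convention built into the definition of strict quasi-concavity and the curvature-versus-convexity dictionary of \S\ref{sec:cvbsa} do the real work; granting that, the remainder is the routine shrink-$\dt$ bookkeeping already seen in \S\ref{sec:cvbsa}.
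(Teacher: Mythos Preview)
Your proof is correct and follows essentially the same approach as the paper's. The paper's argument is nearly identical but simply asserts ``Clearly, $T_u$ is a compact convex set with nonempty interior'' rather than spelling out the local-convexity-from-curvature reasoning you give in your second paragraph; so the step you flag as the main obstacle is precisely the one the paper takes for granted.
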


\begin{proof}
By Proposition~\ref{pro:sdpcover},
we only need prove for every $u\in \pt_c T$ the set
$\cv{T\cap \bar B(u,\dt)}$
is SDP representable
for some $\dt >0$.
For an arbitrary $u\in \pt_cT$,
and let $I(u)=\{1\leq i\leq m: f_i(u)=0\}$.
For any $i\in I(u)$,
if $f_i(x)$ is not sos-concave,
$f_i$ is strictly quasi-concave at $u$.
By continuity,
$f_i$ is strictly quasi-concave
on $\bar B(u,\dt)$ for some $\dt >0$.
Note $f_i(u)>0$ for all $i\notin I(u)$.
Therefore, by continuity,
the number $\dt>0$ can be chosen small enough that
$f_i(x)>0$ for all $x\in \bar B(u,\dt)$ and $i\notin I(u)$.
Then we can see
\[
T_u :=T\cap \bar B(u,\dt) = \{ x\in\re^n:\, f_i(x) \geq 0, \,\forall\, i\in I(u), \quad
\dt^2 - \|x-u\|^2 \geq 0 \}.
\]
For every $i\in I(u)$, the polynomial $f_i(x)$
is either sos-concave or strictly quasi-concave
on $T_u$.
Clearly, $T_u$ is a compact convex set with nonempty interior.
By Theorem \ref{thm:qscv},
we know $\cv{T_u}=T_u$ is SDP representable,
since $T_u$ is convex.
\end{proof}

\medskip

Now we consider nonbasic semialgebraic sets
and give similar sufficient conditions.

\begin{theorem}[Sufficient conditions for SDP representability of convex hulls] \label{thm:nonbcsa}
Assume $T =\bigcup_{k=1}^L T_k$ is a compact semialgebraic set with
\[
T_k = \{ x\in \re^n:\, f^k_{1}(x)\geq 0,\, \cdots,\, f^k_{m_k}(x)\geq 0 \}
\]
being defined by polynomials $f^k_i(x)$.
If for each $u\in \pt_cT$ and $f^k_i$ for which $f^k_i(u)=0$,
$T_k$ has interior near $u$ and $f^k_i$ is either sos-concave or
strictly quasi-concave at $u$,
then $\cv{T}$ is SDP representable.
\end{theorem}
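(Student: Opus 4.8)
The plan is to follow the same localization strategy used for Theorem~\ref{thm:cvhbcsasdp}: reduce to small balls via Proposition~\ref{pro:sdpcover}, represent each local piece by Theorem~\ref{thm:qscv}, and glue the pieces together with Theorem~\ref{thm:mUsdp}.

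First I would fix a point $u\in\pt_cT$. Since each $T_k$ is closed and $T=\bigcup_{k=1}^L T_k$, and $u\in\pt T$, for every $k$ either $u\notin T_k$ or $u\in\pt T_k$: indeed if $u$ were interior to some $T_k$ then a ball around $u$ would lie in $T$, contradicting $u\in\pt T$. For the $k$ with $u\notin T_k$ we have $T_k\cap\bar B(u,\dt)=\emptyset$ once $\dt>0$ is small, so these may be discarded. For each remaining $k$ set $I_k(u)=\{i:\,f^k_i(u)=0\}\ne\emptyset$. For $i\in I_k(u)$, if $f^k_i$ is not sos-concave it is strictly quasi-concave at $u$, hence, by continuity of its gradient and Hessian, strictly quasi-concave on $\bar B(u,\dt)$ for $\dt$ small. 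Also $f^k_i(u)>0$ for $i\notin I_k(u)$, so after shrinking $\dt$ we may assume $f^k_i>0$ on $\bar B(u,\dt)$ for every such $i$. Choosing one $\dt$ that works for the finitely many $k$ and $i$ at once, we get
\[
T_k\cap\bar B(u,\dt)=\left\{x\in\re^n:\, f^k_i(x)\geq 0\ \ \forall\, i\in I_k(u),\ \ \dt^2-\|x-u\|^2\geq 0\right\}.
\]

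The key step is to check that each $T_k\cap\bar B(u,\dt)$ is a compact \emph{convex} set with nonempty interior, so that Theorem~\ref{thm:qscv} (whose hypothesis requires convexity) applies; this is the analog of the corresponding easy assertion in the proof of Theorem~\ref{thm:cvhbcsasdp}, but it deserves a careful word. Nonempty interior is part of the hypothesis ($T_k$ has interior near $u$, a genuine condition since $u\in\pt T_k$). Convexity follows because, after shrinking $\dt$, each set $\{x\in\bar B(u,\dt):f^k_i(x)\geq 0\}$ with $i\in I_k(u)$ is convex: if $f^k_i$ is sos-concave it is concave, so its superlevel set is convex; if $f^k_i$ is strictly quasi-concave at $u$ with $\nabla f^k_i(u)\ne 0$, then on a small ball $Z(f^k_i)$ is the graph of a strictly convex function in the coordinates of the coordinate change \reff{newcoord} and $\{f^k_i\geq 0\}$ lies above that graph, so the local piece is convex; and if $\nabla f^k_i(u)=0$ then strict quasi-concavity means $-\nabla^2 f^k_i(u)\succ 0$, so $f^k_i$ is strictly concave near $u$ and the local superlevel set is again convex. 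A finite intersection of convex sets with $\bar B(u,\dt)$ is convex. Since on $T_k\cap\bar B(u,\dt)$ each active $f^k_i$ is sos-concave or strictly quasi-concave, Theorem~\ref{thm:qscv} shows $T_k\cap\bar B(u,\dt)$ is SDP representable.

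Finally, $T\cap\bar B(u,\dt)=\bigcup_k\big(T_k\cap\bar B(u,\dt)\big)$ is a finite union of bounded SDP representable sets, so Theorem~\ref{thm:mUsdp} produces an SDP representation of $\cv{\bigcup_k\big(T_k\cap\bar B(u,\dt)\big)}=\cv{T\cap\bar B(u,\dt)}$. As $u\in\pt_cT$ was arbitrary, Proposition~\ref{pro:sdpcover} then yields that $\cv{T}$ is SDP representable. I expect the only delicate point to be the local-convexity argument above --- in particular handling the singular case $\nabla f^k_i(u)=0$ and the fact that the relevant superlevel sets become genuinely convex only after restricting to a sufficiently small ball; the rest is the routine patching machinery already established in this section.
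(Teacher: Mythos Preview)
Your proof is correct and follows essentially the same approach as the paper's: reduce via Proposition~\ref{pro:sdpcover}, show each $T_k\cap\bar B(u,\dt)$ satisfies the hypotheses of Theorem~\ref{thm:qscv}, then glue with Theorem~\ref{thm:mUsdp}. The paper's own proof is terser --- it simply asserts that $T_k\cap\bar B(u,\dt)$ is compact convex with nonempty interior and does not explicitly discard the $k$ with $u\notin T_k$ --- so your added justification of local convexity (via the graph description at nonsingular points and strict concavity at singular ones) and your handling of the inactive $T_k$ fill in details the paper leaves implicit rather than constituting a different argument.
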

\begin{proof}
By Proposition~\ref{pro:sdpcover},
it suffices to prove for each $u\in\pt_cT$,
there exists $\dt>0$ such that
$\cv{T\cap \bar B(u,\dt) }$ is SDP representable.
Fix an arbitrary $u\in \pt_cT$,
and let $I_k(u)=\{1\leq i\leq m_k: f^k_i(u)=0\}$.
By assumption, if $i\in I_k(u)$ and $f^k_i(x)$ is not sos-concave,
$f^k_i$ is strictly quasi-concave at $u$.
Thus, by continuity, there exists $\dt >0$ so that
$f^k_i$ is strictly quasi-concave on $\bar B(u,\dt)$.
Note that $f^k_i(u)>0$ for all $i\notin I_k(u)$.
So $\dt>0$ can be chosen small enough such that
$f^k_i(x)>0$ for all $x\in \bar B(u,\dt)$ and $i\notin I_k(u)$.
Then we can see that
\[
T_k \cap \bar B(u,\dt_u) =
\left\{x\in\re^n:\, f^k_i(x)\geq 0, \forall\, i\in I_k(u),\, \dt_u^2- \|x-u\|^2 \geq 0 \right\}
\]
is a compact convex set with nonempty interior.
And, for every $i\in I_k(u)$, $f^k_i(x)$ is either sos-concave
or strictly quasi-concave on $\bar B(u,\dt) $.
By Theorem~\ref{thm:qscv}, the set $T_k \cap \bar B(u,\dt_u)$ is SDP representable.
By Theorem~\ref{thm:mUsdp},
\[
\cv{T\cap \bar B(u,\dt)} =
\cv{ \bigcup_{k=1}^L T_k \cap \bar B(u,\dt)}
\]
is also SDP representable.
\end{proof}

As in  Theorem~\ref{thm:neccbsa}, we can get
similar necessary conditions on the defining polynomials
of the nonconvex sets.

\begin{theorem}[Necessary conditions for SDP representability of convex hulls] \label{thm:ncvxnecc}
Assume $T =\bigcup_{k=1}^L T_k$ is a compact semialgebraic set with
\[
T_k = \{ x\in \re^n:\, f^k_{1}(x)\geq 0,\, \cdots,\, f^k_{m_k}(x)\geq 0 \}
\]
being defined by polynomials $f^k_i(x)$,
and assume its convex hull $\cv{T}$ is SDP representable.
For each $u\in \pt_cT$ and $i\in I_k(u)\ne\emptyset$,
if $f^k_i$ is nonsingular and irredundant
at $u$ with respect to $\pt \cv{T}$, then
$f^k_i$ is quasi-concave at $u$.
\end{theorem}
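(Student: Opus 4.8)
The plan is to transcribe the proof of item~(c) of Theorem~\ref{thm:neccbsa}, now carried out for the compact convex set $\cv{T}$ in place of $S$; the only genuinely new point is that the $f^k_i$ are defining polynomials of $T$ rather than of $\cv{T}$, so one must first locate the places where they also cut out $\cv{T}$. First I would record the needed facts: since $T$ is compact, $\cv{T}$ is compact and convex, and from Proposition~\ref{prop:cvxbdy} and the definition~\reff{convbdry} of $\pt_cT$ (a linear functional supporting $T$ at $u$ also supports $\cv{T}$ at $u$) we have $u\in\pt\cv{T}$. Unwinding the hypothesis that $f^k_i$ is irredundant and nonsingular at $u$ with respect to $\pt\cv{T}$ then yields a sequence of nonsingular boundary points $\{u_N\}\subset Z(f^k_i)\cap\pt\cv{T}$ with $u_N\to u$, at each of which $\nabla f^k_i(u_N)\neq 0$ and, for some $\eps_N>0$, $\cv{T}\cap B(u_N,\eps_N)=\{x\in\re^n: f^k_i(x)\geq 0\}\cap B(u_N,\eps_N)$; in particular $\cv{T}$ is full-dimensional near $u$, so we are in the setting of Theorem~\ref{thm:neccbsa} with $S:=\cv{T}$.

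Next I would run the smooth-point estimate at each $u_N$ exactly as in the first (smooth point) case of the proof of Theorem~\ref{thm:neccbsa}(c). Convexity of $\cv{T}$ produces a supporting hyperplane $u_N+w_N^{\perp}$ at $u_N$; since near $u_N$ the boundary $\pt\cv{T}$ coincides with the smooth hypersurface $Z(f^k_i)$ and $\cv{T}$ lies on the side $f^k_i\geq 0$, this hyperplane must be the tangent hyperplane of $Z(f^k_i)$ at $u_N$, so $\nabla f^k_i(u_N)=\af_N w_N$ for some $\af_N\neq 0$ and $\nabla f^k_i(u_N)$ points into $\cv{T}$. Because $\cv{T}$ is convex and $u_N+w_N^{\perp}$ supports it, for every $0\neq v\in\nabla f^k_i(u_N)^{\perp}$ the point $u_N+\frac{\eps}{\|v\|}v$ stays out of the interior of $\cv{T}$, hence $f^k_i(u_N+\frac{\eps}{\|v\|}v)\leq 0$ for all sufficiently small $\eps>0$; a second-order Taylor expansion of $f^k_i$ at $u_N$, using $f^k_i(u_N)=0$ and $\nabla f^k_i(u_N)^T v=0$, then gives $-v^T\nabla^2 f^k_i(u_N)v\geq 0$ for all $v\in\nabla f^k_i(u_N)^{\perp}$, i.e.\ $f^k_i$ is quasi-concave at $u_N$.

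Finally I would let $N\to\infty$ by the projection trick of Theorem~\ref{thm:neccbsa}(c): writing $R(w):=I_n-\big(\nabla f^k_i(w)^T\nabla f^k_i(w)\big)^{-1}\nabla f^k_i(w)\,\nabla f^k_i(w)^T$, quasi-concavity of $f^k_i$ at $u_N$ is equivalent to $-R(u_N)^T\nabla^2 f^k_i(u_N)\,R(u_N)\succeq 0$; since $\nabla f^k_i(u)\neq 0$ by the nonsingularity hypothesis at $u$, we have $R(u_N)\to R(u)$ and $\nabla^2 f^k_i(u_N)\to\nabla^2 f^k_i(u)$, so $-R(u)^T\nabla^2 f^k_i(u)\,R(u)\succeq 0$, which is precisely quasi-concavity of $f^k_i$ at $u$. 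I expect the only delicate step to be the first one: pinning down exactly what ``irredundant with respect to $\pt\cv{T}$'' provides, namely that near the approximating points it is $\cv{T}$ itself --- not merely $T$, nor the piece $T_k$ --- that is locally cut out by $f^k_i\geq 0$, since this is what lets the convexity of $\cv{T}$ power the supporting-hyperplane argument. Everything past that point is a verbatim copy of the proof of Theorem~\ref{thm:neccbsa}(c), together with the observation made in its Remark that only convexity and semialgebraicity of $\cv{T}$ (not an SDP representation) are used.
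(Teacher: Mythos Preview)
Your overall strategy---reduce to the argument of Theorem~\ref{thm:neccbsa}(c) applied to the convex set $\cv{T}$---is correct, and the limit step via the projector $R(\cdot)$ is fine. There is, however, one genuine slip in the ``delicate step'' you flag: the paper's definition of \emph{irredundant at $u$ with respect to $\pt\cv{T}$} (the one stated just before Theorem~\ref{thm:neccbsa}) only supplies a sequence of \emph{nonsingular points of $\pt T_k$} lying in $Z(f^k_i)\cap\pt\cv{T}$ and converging to $u$; it does \emph{not} assert that $\cv{T}\cap B(u_N,\eps_N)=\{f^k_i\geq 0\}\cap B(u_N,\eps_N)$. So your implication ``$u_N+\tfrac{\eps}{\|v\|}v\notin\mathrm{int}\,\cv{T}\Rightarrow f^k_i\big(u_N+\tfrac{\eps}{\|v\|}v\big)\le 0$'' is not justified as written. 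The fix is exactly what the proof of Theorem~\ref{thm:neccbsa}(c) does: work with $T_k$, not $\cv{T}$. The supporting hyperplane of $\cv{T}$ at $u_N$ also supports $T_k\subset\cv{T}$, and since $u_N$ is a nonsingular point of $\pt T_k$ one has $T_k\cap B(u_N,\dt)=\{x:f^k_i(x)\ge 0,\ \|x-u_N\|<\dt\}$; hence $u_N+\tfrac{\eps}{\|v\|}v$ is not in the interior of $T_k$, which \emph{does} force $f^k_i\le 0$ there. With this correction your proof goes through.

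For comparison, the paper's own proof is much shorter and avoids rerunning the argument: it simply enlarges the family $T_1,\dots,T_L$ to $T_1,\dots,T_M$ (using Theorem~2.7.2 of \cite{BCR}) so that $\cv{T}=\bigcup_{k=1}^M T_k$, observes $\pt_cT\subset\pt\cv{T}$, and then invokes Theorem~\ref{thm:neccbsa}(c) as a black box with $\overline{S}=\cv{T}$. Your route is more self-contained; the paper's route is more economical but requires the small observation that the original $T_k$'s sit inside \emph{some} basic-closed decomposition of $\cv{T}$, so that Theorem~\ref{thm:neccbsa}(c) applies verbatim to the polynomials $f^k_i$.
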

\begin{proof}
Note that the convex hull $\cv{T}$ is compact and $T \subset \cv{T}$.
By Theorem~2.7.2 of \cite{BCR},
there exist basic closed semialgebraic sets
$T_{L+1},\ldots, T_{M}$ such that
\[
\cv{T} =\bigcup_{k=1}^{M} T_k.
\]
Every $T_k$ for $k=L+1,\ldots,M$ can also  be defined in the form
\[
T_k = \{ x\in \re^n:\, f^k_1(x) \geq 0,\, \cdots,\, f^k_{m_k}(x) \geq 0\}
\]
for certain polynomials $f^k_i(x)$.
The sets $T_1,\ldots,T_L$ are
basic closed semialgebraic
subsets of $\cv{T}$ and $\pt_cT \subseteq \pt \cv{T}$.
Consider $\cv{T}$ as the set $\overline{S}$ in Theorem~\ref{thm:neccbsa}.
Then the conclusion of this theorem is a direct application of
item (c) of Theorem~\ref{thm:neccbsa}.
\end{proof}

%




\subsection{The PDLH condition}
\label{sec:pdlh}

In the previous subsection,
the nonconvex semialgebraic sets are assumed to have
nonempty interior near the convex boundary $\pt_cT$,
and so there can be no equality defining polynomials.
Now, in this subsection,
we consider the more general nonconvex semialgebraic sets
which might have empty interior
and equality defining polynomials.
Then the sufficient conditions
in the preceding subsection
do not hold anymore.
We need another kind of sufficient condition:
{\it the positive definite Lagrange Hessian (PDLH) condition}.
As in earlier sections, begin with basic semialgebraic sets.

\medskip

Assume $T$ is a compact basic semialgebraic set of the form
\[
T=\left\{x\in\re^n:\, f_1(x)=\cdots = f_{m_1}(x) =0, h_1(x) \geq 0, \cdots, h_{m_2}(x)\geq 0\right\}.
\]
Let $\pt T$ be the boundary of $T$.
For $u\in \pt T$, we say $T$ satisfies the
{\it positive definite Lagrange Hessian (PDLH) condition} at $u$
if there exists $\dt_u >0$ such that,
for every unit length vector $\ell\in \re^n$
and every $0<\dt \leq \dt_u$,
the first order optimality condition holds
at any global minimizer for the optimization problem
\be \label{cond:pdlh}
\baray{rl}
\underset{x\in \re^n}{\min} &   \ell^Tx  \\
s.t. &   f_1(x)=\cdots = f_{m_1}(x) =0 \\
 &  h_1(x) \geq 0, \cdots, h_{m_2}(x)\geq 0 \\
 & \dt^2 - \|x-u\|^2 \geq 0
\earay
\ee
and the Hessian of the associated Lagrange function
is positive definite on the ball $\bar B(u,\dt)$.
To be more precise, let $m=m_2+1$ and $h_m(x)=\dt^2 - \|x-u\|^2$.
The associated Lagrange function of \reff{cond:pdlh} is
\[
\mc{L}(x) = \ell^Tx - \sum_{i=1}^{m_1} \lmd_i f_i(x)
- \sum_{j=1}^m \mu_j h_j(x)
\]
where 
$\mu_1\geq 0, \cdots, \mu_m\geq 0$.
Let $v$ be a global minimizer of problem~\reff{cond:pdlh}.
Then the PDLH condition requires
\[
\ell =  \sum_{i=1}^{m_1} \lmd_i \nabla f_i(v)
+ \sum_{j=1}^m \mu_j \nabla h_j(v)
\]
for some $\lmd_i$ and $\mu_j \geq 0$,
and the Hessian of the Lagrange function satisfies
\[
\nabla^2 \mc{L}(x) = - \sum_{i=1}^{m_1}
\lmd_i \nabla^2 f_i(x) - \sum_{j=1}^m \mu_j \nabla^2 h_j(x) \succ 0,\,
\forall\, x \in \bar B(u,\dt).
\]

\medskip
\noindent
{\it Remark:}
The defined PDLH condition here is stronger than
the PDLH condition defined in \cite{HN1}.
This is because
the PDLH condition in \cite{HN1}
is defined for convex sets described by concave functions.
However, in this paper,
the set $T$ here is nonconvex.
We need stronger assumptions.

\bigskip

The next theorem is an extension of Theorem~1.1 in \cite{HN1}
to give sufficient conditions assuring the SDP representability
of $\cv{T}$.

\begin{theorem} \label{thm:pdlh}
Let $T=\{x\in\re^n:\, f_1(x)=\cdots = f_{m_1}(x) =0, h_1(x) \geq 0,
\cdots, h_{m_2}(x)\geq 0\}$
be a compact set defined by polynomials.
If the PDLH condition holds at every $u\in \pt_cT$,
then $\cv{T}$ is SDP representable.
\end{theorem}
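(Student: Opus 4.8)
The plan is to localize with Proposition~\ref{pro:sdpcover} and then, on each small piece of $T$ near $\pt_cT$, run a Lasserre--Parrilo moment relaxation, following the scheme of the proof of Theorem~1.1 in \cite{HN1}. By Proposition~\ref{pro:sdpcover} it suffices, for each fixed $u\in\pt_cT$, to produce some $\dt>0$ with $\cv{T\cap\bar B(u,\dt)}$ SDP representable. So fix $u$, take $\dt=\dt_u$ from the PDLH hypothesis at $u$, put $h_m(x)=\dt^2-\|x-u\|^2$, and set $T_u:=T\cap\bar B(u,\dt)=\{x:\ f_1(x)=\cdots=f_{m_1}(x)=0,\ h_1(x)\ge0,\dots,h_m(x)\ge0\}$, a nonempty compact set.

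For an integer $N$ let $L_N$ denote the order-$N$ moment relaxation of $\cv{T_u}$ built from these defining polynomials: the set of all $x=(y_{e_1},\dots,y_{e_n})\in\re^n$, where $y$ ranges over truncated moment sequences of order $2N$ with $y_0=1$, moment matrix $M_N(y)\succeq0$, and localizing matrices $M_{\bullet}(h_jy)\succeq0$, $M_{\bullet}(f_iy)=0$ of the appropriate orders. Each $L_N$ is SDP representable by construction, is convex, contains $\cv{T_u}$ (the moments of the Dirac measure at any point of $T_u$ are feasible), and the constraint $h_m\ge0$ forces $L_N\subseteq\bar B(u,\dt)$, so $L_N$ is bounded. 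Since $L_N$ is convex, $\overline{L_N}$ is the intersection of the halfspaces $\{x:\ell^Tx\ge\min_{L_N}\ell^Tx\}$, so the theorem will follow once we show that for $N$ large $\min_{L_N}\ell^Tx=\min_{x\in T_u}\ell^Tx$ for every unit $\ell$: then $\overline{L_N}=\cv{T_u}$, and from $\cv{T_u}\subseteq L_N\subseteq\overline{L_N}=\cv{T_u}$ we get $L_N=\cv{T_u}$.

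Thus the heart of the argument is: for every unit $\ell\in\re^n$, writing $\ell_0:=\min_{x\in T_u}\ell^Tx$, the polynomial $\ell^Tx-\ell_0$ lies in the order-$N$ truncated quadratic module of $\{f_i,h_j\}$ for a single $N$ independent of $\ell$. Here the PDLH condition at $u$ enters: it provides a global minimizer $v$ of \reff{cond:pdlh} and multipliers $\lmd_i$, $\mu_j\ge0$ with $\ell=\sum_i\lmd_i\nabla f_i(v)+\sum_j\mu_j\nabla h_j(v)$ and $\nabla^2\mc{L}(x)\succ0$ on $\bar B(u,\dt)$, where $\mc{L}(x)=\ell^Tx-\sum_i\lmd_if_i(x)-\sum_j\mu_jh_j(x)$. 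Strict convexity on $\bar B(u,\dt)$ together with $\nabla\mc{L}(v)=0$ makes $v$ the unique minimizer of $\mc{L}$ on the ball, and complementary slackness ($f_i(v)=0$, $\mu_jh_j(v)=0$) gives $\mc{L}(v)=\ell^Tv=\ell_0$. Hence $q(x):=\mc{L}(x)-\ell_0$ is a polynomial of degree at most $d:=\max\{1,\deg f_i,\deg h_j\}$ that is nonnegative and strictly convex on $\bar B(u,\dt)$ and vanishes at $v$. The technical core of Theorem~1.1 in \cite{HN1} is that such a $q$ admits a representation $q=\sigma_0+\sigma_mh_m$ with $\sigma_0,\sigma_m$ sums of squares whose degrees are bounded by some $N=N(d,n)$ depending only on $d$ and $n$; granting this, $\ell^Tx-\ell_0=\sigma_0+\sum_i\lmd_if_i+\sum_{j<m}\mu_jh_j+(\mu_m+\sigma_m)h_m$ displays $\ell^Tx-\ell_0$ in the order-$N$ quadratic module (note $\mu_m+\sigma_m$ is again a sum of squares), and evaluating at any moment sequence feasible for $L_N$ yields $\ell^Tx\ge\ell_0$, i.e.\ $\min_{L_N}\ell^Tx=\ell_0$.

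The main obstacle is exactly that last representation lemma: ordinary Putinar-type Positivstellensätze give no degree control, so one must exploit strict convexity together with a uniform lower bound $\nabla^2\mc{L}\succeq cI$ on $\bar B(u,\dt)$ valid for all unit $\ell$ (obtained by a compactness argument over the unit sphere of directions, as in \cite{HN1}) to obtain a bound on $\deg\sigma_0,\deg\sigma_m$ that does not depend on $\ell$. Everything else — the reduction through Proposition~\ref{pro:sdpcover}, the moment-SOS duality turning support-function equality into quadratic-module membership, and the sandwich identifying $L_N$ with $\cv{T_u}$ — is routine once this uniform bound is in hand.
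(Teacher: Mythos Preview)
Your proposal is correct and follows essentially the same architecture as the paper's proof: localize via Proposition~\ref{pro:sdpcover}, use the PDLH multipliers and Taylor expansion to write $\ell^Tx-\ell_0$ as $\sum_i\lmd_if_i+\sum_j\mu_jh_j$ plus a quadratic form $(x-v)^TH(x)(x-v)$ with $H(x)\succ0$ on the ball, then invoke a matrix Positivstellensatz with degree bounds (Theorem~27 in \cite{HN1}) together with a compactness argument over $\|\ell\|=1$ to get a uniform $N$.

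Two small points of comparison. First, your claim that the representation $q=\sigma_0+\sigma_mh_m$ has degree bounded by $N(d,n)$ depending only on degree and dimension is not what Theorem~27 of \cite{HN1} gives: the bound there is $\Omega(M/\eps)$, where $\eps I\preceq H(x)\preceq MI$ on the ball, and uniformity in $\ell$ then comes (as you say in your last paragraph, and as the paper also argues) from continuity of the multipliers on the unit sphere. Second, the paper builds the relaxation with localizing constraints for \emph{all products} $h^\nu=h_1^{\nu_1}\cdots h_m^{\nu_m}$, $\nu\in\{0,1\}^m$ (the Schm\"udgen preordering), and accordingly gets $\ell^Tx-\ell^*=\sum_k\lmd_kf_k+\sum_\nu\sigma_\nu h^\nu$, whereas you use only the quadratic module (single $h_j$'s). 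Your simplification is legitimate here because $H(x)$ is positive definite on the ball $\{h_m\ge0\}$ alone, so the matrix representation needs only $h_m$; this yields a smaller lifted LMI than the paper's.
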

\begin{proof}
By Proposition \ref{pro:sdpcover},
we only need prove for every $u\in \pt_cT$,
there exists $\dt >0$ such that $\cv{T\cap \bar B(u,\dt)}$
is SDP representable.
Let $\dt = \dt_u >0$ be given by the PDLH condition and
define $T_u = T\cap \bar B(u,\dt)$.
We now prove $\cv{T_u}$ is SDP representable.

\medskip

First, we construct the lifted LMI for $T_u$.
Let $m=m_2+1$ and $h_m(x)=\dt^2-\|x-u\|^2$.
For integer $N$, define the monomial vector
\[
[x^N] = \bbm 1 & x_1 &\cdots
& x_n & x_1^2 & x_1x_2 &\cdots & x_n^N  \ebm^T.
\]
Define new polynomials
$h^\nu(x) = h_1^{\nu_1}(x) \cdots h_m^{\nu_m}(x)$,
where $\nu=(\nu_1,\cdots,\nu_m) \in \Z_+^m$.
Let $d_\nu = \lceil \deg(h_1^{\nu_1}\cdots h_r^{\nu_m})/2 \rceil$
and $d_k = \lceil \deg(f_k)/2 \rceil$.
For a fixed integer $N \geq d_\nu, d_k$, define
{\small
\[
M_{N - d_\nu}(h^\nu y) =
\int_{\re^n}  h^\nu(x) [x^{N - d_\nu}][x^{N - d_\nu}]^T\, d \mu(x)
= \sum_{0 \leq |\af| \leq 2N } A_\af^\nu y_\af
\]
\[
f_k^Ty =
\int_{\re^n}  f_k(x) \, d \mu(x)
= \sum_{0 \leq |\af| \leq 2d_k } f_\af^k y_\af.
\]
}
Here $\mu(\cdot)$ can be any nonnegative measure such that $\mu(\re^n)=1$,
$y_\af = \int_{\re^n} x^\af d \mu(x)$ are the moments,
$A_\af^\nu$ are symmetric matrices, and $f_\af^k$ are scalars such that
{\small
\begin{align*}
h^\nu(x) [x^{N - d_\nu}][x^{N - d_\nu}]^T\,  &=  \,
\sum_{0 \leq |\af| \leq 2N } A_\af^\nu x^\af \\
f_k(x) \,  &=  \,
\sum_{0 \leq |\af| \leq 2d_k } f_\af^k x^\af.
\end{align*}
}
If $\supp(\mu) \subseteq T$, then we have $y_0=1$ and
\begin{align*}
\left.\baray{rl}
\forall \, \nu \in \{0,1\}^m, &
M_{N-d_\nu}(h^\nu y) \succeq 0 \\
\forall \, 1\leq k\leq m, &
f_k^Ty =0
\earay\right\}.
\end{align*}
Let $e_i$  denote the standard $i$-th unit vector in $\re^n$.
If we set $y_0=1$ and $y_{e_i}=x_i$
in the above LMI, then it becomes the LMI
\begin{align} \label{LmiN}
\left. \baray{rl}
\forall \, \nu \in \{0,1\}^m, &
A_0^\nu+\underset{1\leq i\leq n}{\sum} A_{e_i}^\nu x_i +
\underset{1< |\af|\leq 2N}{\sum} A_\af^\nu y_\af \succeq 0 \\
\forall \, 1\leq k \leq m, &
f_0^k+\underset{1\leq i\leq n}{\sum} f_{e_i}^k x_i +
\underset{1< |\af|\leq 2d_k}{\sum} f_\af^k y_\af = 0
\earay \right\}.
\end{align}
Obviously, the projection of LMI~\reff{LmiN}
to $x$-space contains $\cv{T_u}$.

\medskip
Second, we prove that
every linear polynomial nonnegative on $T_u$
has an SOS representation
with uniform degree bound.
Given any $\ell \in \re^n$ with $\|\ell\|=1$,
let $\ell^*$ be the minimum value of
$\ell^Tx$ over $T_u$ and $v\in T_u$
be a global minimizer.
By the PDLH condition,
there exist Lagrange multipliers
$\lmd_1, \cdots, \lmd_{m_1}$ and
$\mu_1\geq 0, \cdots, \mu_m\geq 0$ such that
\[
\ell =  \sum_{i=1}^{m_1} \lmd_i \nabla f_i(v)
+ \sum_{j=1}^m \mu_j \nabla h_j(v)
\]
and the Hessian of the Lagrange function satisfies
\[
\nabla^2 \mc{L}(x) = - \sum_{i=1}^{m_1}
\lmd_i \nabla^2 f_i(x) - \sum_{j=1}^m \mu_j \nabla^2 h_j(x)
\succ 0,\, \forall \, x\in \bar B(u,\dt).
\]
Since the Lagrange multipliers
$\lmd_i$ and $\mu_j$ are continuous functions of $\ell$
on the unit sphere,
there must exist constants $M>\eps>0$ such that for all $x\in B(u,\dt)$
\[
 M I_n \succeq  \int_0^1 \int_0^t \nabla^2 \mc{L}(v+s(x-v)) ds \, dt \succeq \eps I_n.
\]
By Theorem~27 in \cite{HN1}, there exist SOS matrix polynomials $G_\nu(x)$
such that
\[
\int_0^1 \int_0^t \nabla^2 \mc{L}(v+s(x-v)) ds \, dt  =
\sum_{\nu\in\{0,1\}^m} h_1^{\nu_1}(x) \cdots h_m^{\nu_m}(x) G_\nu(x)
\]
and the degrees of summand polynomials are bounded by
\[
\deg(h_1^{\nu_1}(x) \cdots h_m^{\nu_m}(x) G_\nu(x)) \leq
\Omega(\frac{M}{\eps}).
\]
Here $\Omega(\cdot)$ is a function depending on $T_u$.
Let $ f_\ell(x) = \mc{L}(x)-\ell^*$.
Then $ f_\ell(v)= 0$ and $\nabla f_\ell(v)=0$.
By Taylor expansion, we have
\begin{align*}
f_\ell(x) &=   (x-v)^T
\left(\int_0^1 \int_0^t \nabla^2 \mc{L}(v+s(x-v)) ds \, dt \right) (x-v) \\
& = \sum_{\nu\in\{0,1\}^m} \phi_\nu(x) h_1^{\nu_1}(x) \cdots h_m^{\nu_m}(x)
\end{align*}
where $\phi_\nu(x) =  (x-v)^TG_\nu(x)(x-v)$ are SOS scalar polynomials.
Since $\mu_j\geq 0$, let
\[
\sig_\nu(x) = \phi_\nu(x) +
\bca \mu_j & \text{ if } \nu = e_j \\  0 & \text{ otherwise } \eca
\]
be new SOS polynomials. Then we have
\[
\ell^Tx-\ell^* = \sum_{k=1}^{m_1} \lmd_k f_k(x) +
\sum_{\nu\in\{0,1\}^m} \sig_\nu(x) h_1^{\nu_1}(x) \cdots h_m^{\nu_m}(x).
\]
There is a uniform bound $N$ independent of $\ell$ such that
\be \label{eq:umdgbd}
\deg(f_k(x)), \deg(h_1^{\nu_1}(x) \cdots h_m^{\nu_m}(x) \sig_\nu(x)) \leq 2N.
\ee

\medskip
Third, we will show that
\reff{LmiN} is an SDP representation for $\cv{T_u}$
when $N$ is given by \reff{eq:umdgbd}.
In the above, we have actually shown that
a property called {\it Schm\"{u}dgen's
Bounded Degree Nonnegative Representation (S-BDNR)}
(see Helton and Nie~\cite{HN1}) holds, i.e.,
every affine polynomials $\ell^Tx-\ell^*$ nonnegative on $T$
belongs to the preordering generated by the $f_i's$ and $h_j's$
with uniform degree bounds.
This implies a weaker property called
the {\it Schm\"{u}dgen's Bounded Degree Representation (S-BDR)}
(see Lasserre~\cite{Las06}) holds, i.e.,
almost every affine polynomials $\ell^Tx-\ell^*$ positive on $T$
belongs to the preordering generated by the $f_i's$ and $h_j's$
with uniform degree bounds.
So Theorem~2 in \cite{Las06} can be applied to show that
the LMI~\reff{LmiN} is a SDP representation of $\cv{T}$.
For the convenience of readers, we give the direct proof here.
Since the projection of \reff{LmiN} to $x$-space
contains $\cv{T_u}$, it is sufficient to prove the converse.
In pursuit of a contradiction, suppose there exists a vector $(\hat
x, \hat y)$ satisfying \reff{LmiN} such that $\hat x \notin \cv{T_u}$.
By the Hahn-Banach Separation Theorem, there must exist a
unit length vector $\ell$ such that
\be  \label{eq:minell}
\baray{rl}
\ell^T \hat x < \ell^* =
\min & \ell^Tx \\
s.t. &  f_1(x) = \cdots f_{m_1}(x) = 0 \\
&   h_1(x)\geq 0, \cdots, h_m(x) \geq 0.
\earay
\ee
Let $v$ be the minimizer of $\ell^T x$ on $T_u$; of course $v
\in \pt T_u$.
By the PDLH condition,
there exist Lagrange multipliers
$\lmd_1,\cdots,\lmd_{m_1}$ and
$\mu_1,\cdots,\mu_m\geq 0$ such that
\[
\ell = \sum_{i=1}^{m_1} \lmd_i \nabla f_i(v) +
\sum_{j=1}^{m} \mu_j \nabla h_i(v), \quad
\quad \mu_j h_j(v) =
0,\quad \forall \, j =1 ,\cdots, m.
\]
As we have proved earlier,  the identity
\[
\ell^Tx - \ell^* = \sum_{k=1}^{m_1} \lmd_k f_k(x) +
\sum_{\nu\in\{0,1\}^m}
\sig_\nu(x) h_1^{\nu_1}(x) \cdots h_m^{\nu_m}(x)
\]
holds
for some SOS polynomials $\sig_\nu(x)$ with uniform degree bound
\[
\deg(\sig_\nu(x) h_1^{\nu_1}(x) \cdots h_m^{\nu_m}(x)) \leq 2N.
\]
Thus we can write $\sig_\nu(x) = [x^{N-d_\nu}]^T W_\nu [x^{N-d_\nu}]$
for some symmetric positive semidefinite matrix $W_\nu\succeq 0$.
In the above identity, replace each monomial $x^\af$ with
$|\af|>1$ by $\hat y_\af$, then we get,
for $\hat y_0 = 1$ and every $\hat y_{e_i} = \hat x_i$, \ldots,
\[
\ell^T\hat x - \ell^* = \sum_{k=1}^{m_1} \lmd_k \,
\left( \sum_{0\leq |\af|\leq 2d_k} f_\af^k \hat y_\af \right) +
\sum_{\nu\in\{0,1\}^m}
Trace\left( W_\nu \cdot  \big(\sum_{0\leq |\af|\leq 2N} A_\af^i \hat y_\af\big) \right) \geq 0,
\]
which contradicts \reff{eq:minell}.
\end{proof}

\begin{theorem} \label{thm:pdlhU}
Let $T=\bigcup_{k=1}^L T_k$ be a compact semialgebraic set where
\[
T_k=\{x\in\re^n:\, f_{k,1}(x)=\cdots = f_{k,m_{k,1}}(x) =0, h_{k,1}(x) \geq 0,
\cdots, h_{k,m_{k,2}}(x)\geq 0\}.
\]
If for each $T_k$, the PDLH condition
holds at every $u\in  \pt_cT \cap \pt T_k$,
then $\cv{T}$ is SDP representable.
\end{theorem}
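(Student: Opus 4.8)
\emph{Proof proposal.} The plan is to imitate the proof of Theorem~\ref{thm:pdlh}: localize to small balls around points of $\pt_c T$ via Proposition~\ref{pro:sdpcover}, apply the local part of Theorem~\ref{thm:pdlh} to each piece $T_k$, and patch the pieces together with Theorem~\ref{thm:mUsdp}.

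First, by Proposition~\ref{pro:sdpcover} it suffices to produce, for each $u\in\pt_c T$, a radius $\dt>0$ such that $\cv{T\cap\bar B(u,\dt)}$ is SDP representable. Fix $u\in\pt_c T$. Since $\pt_c T\subseteq\pt T$ and $\intr{T_k}\subseteq\intr{T}$ for every $k$, the point $u$ lies in no $\intr{T_k}$; hence for every index $k$ with $u\in T_k$ we have $u\in\pt T_k$, so $u\in\pt_c T\cap\pt T_k$ and the PDLH condition holds for $T_k$ at $u$. For the (finitely many) indices $k$ with $u\notin T_k$, compactness of $T_k$ gives $\rho_k>0$ with $\bar B(u,\rho_k)\cap T_k=\emptyset$. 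For each $k$ with $u\in T_k$, let $\dt_u^{(k)}>0$ be the radius furnished by the PDLH condition at $u$, and set $\dt=\min_k\{\dt_u^{(k)},\rho_k\}>0$.

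Second, I claim that for each $k$ with $u\in T_k$ the set $\cv{T_k\cap\bar B(u,\dt)}$ is SDP representable. This is exactly the conclusion reached inside the proof of Theorem~\ref{thm:pdlh} after its reduction to a ball: that argument, applied with $T$ replaced by $T_k$, uses only the first-order optimality at global minimizers of $\ell^Tx$ over $T_k\cap\bar B(u,\dt)$ together with positive definiteness on $\bar B(u,\dt)$ of the associated Lagrange Hessian, both of which the PDLH condition for $T_k$ at $u$ supplies once $0<\dt\le\dt_u^{(k)}$. Running the three-step construction there --- the moment LMI~\reff{LmiN}, the uniform-degree SOS representation of the affine functions nonnegative on the localized set, and the verification that the projection of the LMI equals the convex hull --- yields an SDP representation of $\cv{T_k\cap\bar B(u,\dt)}$.

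Third, assemble. Since $\dt\le\rho_k$, the indices with $u\notin T_k$ contribute nothing, so $T\cap\bar B(u,\dt)=\bigcup_{k:\,u\in T_k}\bigl(T_k\cap\bar B(u,\dt)\bigr)$. Each $\cv{T_k\cap\bar B(u,\dt)}$ is bounded and SDP representable, and $\cv{\bigcup_k(T_k\cap\bar B(u,\dt))}=\cv{\bigcup_k\cv{T_k\cap\bar B(u,\dt)}}$, so Theorem~\ref{thm:mUsdp} shows $\cv{T\cap\bar B(u,\dt)}$ is SDP representable. Proposition~\ref{pro:sdpcover} then gives that $\cv{T}$ is SDP representable. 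The only point needing care --- rather than a genuine obstacle --- is the bookkeeping of the second paragraph: the PDLH hypothesis for $T_k$ is stated only at boundary points of $T_k$, so one must check that every $u\in\pt_c T$ meeting $T_k$ actually lies on $\pt T_k$, and that the local construction inside Theorem~\ref{thm:pdlh} requires the PDLH condition only at the single point $u$; once these are noted, the union patching is routine.
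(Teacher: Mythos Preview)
Your proof is correct and follows essentially the same approach as the paper: reduce via Proposition~\ref{pro:sdpcover}, apply the local argument from Theorem~\ref{thm:pdlh} to each relevant $T_k$, and patch with Theorem~\ref{thm:mUsdp}. Your bookkeeping is in fact slightly more careful than the paper's---you explicitly verify that $u\in T_k$ forces $u\in\pt T_k$ (so the PDLH hypothesis applies) and you explicitly shrink $\dt$ to exclude the $T_k$ not containing $u$, whereas the paper defines $I(u)=\{k:u\in\pt T_k\}$ and tacitly assumes these points.
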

\begin{proof}
By Proposition~\ref{pro:sdpcover},
it suffices to prove for each $u\in\pt_cT$,
$\cv{T\cap \bar B(u,\dt) }$
is SDP representable for some $\dt >0$.
Fix an arbitrary $u\in \pt_cT$.
Let $I(u)=\{1\leq k\leq L: u \in \pt T_k\}$.
Then, by assumption, the PDLH condition holds at $u$ for every $T_k$
with $k\in I(u)$, and thus the radius $\dt>0$
required in the PDLH condition
can be chosen uniformly for all $k\in I(u)$ since $I(u)$ is finite.
Hence we have
\[
\cv{T\cap \bar B(u,\dt) }  = \cv{ \bigcup_{k\in I(u)} T_k\cap \bar B(u,\dt)   }
= \cv{ \bigcup_{k\in I(u)} \cv{ T_k\cap \bar B(u,\dt) }  } .
\]
By the proof of Theorem~\ref{thm:pdlh}, the set $\cv{ T_k\cap \bar B(u,\dt) }$ is SDP representable.
Therefore, by Theorem~\ref{thm:mUsdp},
$\cv{T\cap \bar B(u,\dt) }$ is also SDP representable.
\end{proof}

\noindent
{\it Remark:} \,
It should be mentioned that the PDLH condition is a very strong condition.
It requires that, when every linear functional is minimized over the nonconvex set $T \cap \bar B(u,\dt)$,
the first order KKT condition holds and that
the Hessian of the Lagrangian is positive definite
at the minimizer.
This might restrict the applications of
Theorem~\ref{thm:pdlhU} in some cases.

\bigskip

\section{A more geometric proof of Theorem \ref{thm:bcsa}}
\setcounter{equation}{0}
\label{sec:ghomi}

For which set $S$ does there exist a set of defining polynomials for
which the Lasserre-Parrilo type moment relaxations
produce an SDP representation of $S$?
The major challenge is that while
$S$ may be presented to us by polynomials
for which the Lasserre-Parrilo type constructions fail,
there might exist another set of defining polynomials
for which such a construction succeeds.
This requires us to be able to find a  set of
defining polynomials such that the Lasserre-Parrilo type constructions work.

This section presents a very different approach to
proving a similar version of Theorem \ref{thm:bcsa},
since what we did there used the localization technique heavily.
We shall show here that the Lasserre-Parrilo type moment construction
gives an SDP representation
by using a certain set of defining polynomials.
The proof we shall give,
based on Theorems~3 and 4 of Helton and Nie \cite{HN1}
and on the proof of a proposition of Ghomi \cite{Ghomi} (on
smoothing boundaries of convex sets),
is also very geometrical.

For the convex set
$S =\{ x\in \re^n:\, g_1(x)\geq 0,\, \cdots,\, g_m(x)\geq 0 \}$,
define $S_i =\{ x\in \re^n:\, g_i(x)\geq 0 \}$
and $Z_i = \{x\in\re^n: g_i(x)=0\}$.
The zero set $Z_i$ is a hypersurface.
Suppose $Z_i$ does not intersect the interior of $S$.
Then $Z_i \cap S = Z_i \cap \bdrS$
and so is contained in  the boundary of $S$.

In addition to the definition of positive curvature,
we need a hypothesis about the shape of
$Z_i \cap \pt S$.
We say
$Z_i \cap \bdrS$
has {\it strictly convex shape with respect to $S$}
if there exists a relative open subset
$Y_i\subset Z_i$ containing $Z_i \cap \bdrS$
such that for every $p\in \overline{Y}_i$
the set $S \cup \overline{Y}_i$ lies in one side of
the tangent plane $T_p(Z_i)$ of $Z_i$ at $p$,
and does not touch $T_p(Z_i)$ except $p$, that is,
$T_p(Z_i) \cap (S \cup \overline{Y}_i) \subseteq \{p\}$.
The notion of strictly convex shape
follows the notion of {\it strictly convex hypersurface} introduced in Ghomi \cite{Ghomi}.

\begin{theorem}
\label{thm:bcsaweak}
Let $S =\{ x\in \re^n:\, g_1(x)\geq 0,\, \cdots,\, g_m(x)\geq 0 \}$
be a compact convex set defined by polynomials $g_i$
and assume $S$ has  nonempty interior.
Assume $g_i(x)>0$ whenever $x$ is in the interior of $S$,
$\nabla g_i(u)\ne 0$ whenever $u\in Z_i \cap S$,
and $Z_i \cap \bdrS$ has strictly convex shape with respect to $S$
when $g_i(x)$ is not sos-concave.
If for each $u\in \bdS$ and every $i$ such that $g_i(u)=0$
we have either $g_i$ is sos-concave or
$Z_i$ has positive curvature at $u$,
then $S$ is SDP representable.
Moreover, there is a certain set of defining
polynomials for $S$ for which the
Lasserre-Parrilo moment construction
(5.4) and (5.6) given in \cite{HN1} gives an SDP representation.
\end{theorem}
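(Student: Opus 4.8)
\medskip
\noindent{\it Proof plan.}\ \,
The plan is to reduce the whole statement to Theorems~3 and~4 of Helton and Nie~\cite{HN1}. Those theorems say that if a compact convex set $S$ with nonempty interior is cut out by polynomials $g_i$ with $g_i>0$ on its interior and $\nabla g_i\ne 0$ on $Z_i\cap S$, and if for each $i$ either $g_i$ is sos-concave or $S_i$ is \emph{extendable poscurv-convex with respect to} $S$ (there is a compact convex $T_i\supseteq S$ with $C^\infty$ boundary $\pt T_i$ of everywhere positive curvature such that $\pt T_i\cap S=Z_i\cap S$), then a suitable set of defining polynomials for $S$ makes the Lasserre--Parrilo moment construction (5.4),(5.6) of~\cite{HN1} an exact SDP representation of $S$. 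The sos-concave $g_i$ are already in the allowed list, so the only thing to prove is: if $g_i$ is \emph{not} sos-concave, then $S_i$ is extendable poscurv-convex with respect to $S$. For such $i$, since $g_i>0$ on the interior we have $Z_i\cap S=Z_i\cap\bdS$, a compact set; and since positive curvature is an open condition, after shrinking the relatively open $Y_i\subset Z_i$ furnished by the strictly convex shape hypothesis we may assume $Z_i$ is a smooth hypersurface, positively curved at every point of $\overline{Y}_i$, with $Z_i\cap\bdS\subset Y_i$.

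The heart of the argument is to construct $T_i$ by imitating the proof of Ghomi's proposition on smoothing the boundary of a convex body~\cite{Ghomi}. The strictly convex shape hypothesis says exactly that $\overline{Y}_i$ is a compact $C^\infty$ strictly convex hypersurface-with-boundary and that, for every $p\in\overline{Y}_i$, the set $S\cup\overline{Y}_i$ lies in the closed half-space $H_p$ bounded by $T_p(Z_i)$ and touches $T_p(Z_i)$ only possibly at $p$. Hence the closed convex set $K:=\bigcap_{p\in\overline{Y}_i}H_p$ contains $S\cup\overline{Y}_i$, and its boundary coincides with $Z_i$ on a relative neighbourhood of $Z_i\cap\bdS$ in $Z_i$, since the envelope of the tangent half-spaces of the strictly convex piece $\overline{Y}_i$ is $\overline{Y}_i$ itself. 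For $R$ large the body $K\cap\bar B(0,R)$ is compact, still contains $S$, and still agrees with $Z_i$ near $Z_i\cap\bdS$; applying Ghomi's rounding procedure to its boundary away from $Z_i\cap\bdS$ yields a compact convex $T_i\supseteq S$ whose boundary is $C^\infty$ with positive curvature everywhere and still coincides with $Z_i$ on a relative neighbourhood of $Z_i\cap\bdS$. Then $\pt T_i\cap S=Z_i\cap S$: one containment is clear because $Z_i\cap\bdS\subset\pt T_i$; for the other, a point of $\pt T_i$ lying in $S$ must lie in $\bdS$ (as $T_i\supseteq S$ and $S$ has nonempty interior), the portion of $\pt T_i$ equal to $Z_i$ contributes only points of $Z_i\cap\bdS$, and the strict condition $T_p(Z_i)\cap(S\cup\overline{Y}_i)\subseteq\{p\}$ together with choosing $R$ so large that the rounded portion of $\pt T_i$ stays away from $S$ rules out any further contact. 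Thus $S_i$ is extendable poscurv-convex with respect to $S$, and Theorems~3 and~4 of~\cite{HN1} apply, producing the claimed set of defining polynomials (the sos-concave $g_i$ unchanged, each remaining $g_i$ replaced by an appropriate polynomial multiple with negative definite Hessian on $Z_i\cap\bdS$, the smooth bodies $T_i$ used only in the analysis) and the exactness of the moment construction.

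The hard part will be the Ghomi-type extension: producing $T_i$ that simultaneously has everywhere positive curvature, contains $S$, and meets $S$ in exactly $Z_i\cap\bdS$. The strictly convex shape hypothesis is precisely what makes the last two requirements compatible, and the real work is to adapt Ghomi's capping-off-and-rounding construction so that it respects the whole convex body $S$, not just the hypersurface piece $\overline{Y}_i$, while preserving positive curvature through the rounding.
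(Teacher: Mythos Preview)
Your plan is correct and follows the same route as the paper: reduce to Theorem~4 of \cite{HN1} by showing that each non-sos-concave $S_i$ is extendable poscurv-convex with respect to $S$, and build the extending body $T_i$ via a Ghomi-type smoothing, with the strictly convex shape hypothesis supplying exactly the geometric input needed.

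The one substantive difference is in the intermediate convex body you feed into the smoothing. You take $K=\bigl(\bigcap_{p\in\overline{Y}_i}H_p\bigr)\cap\bar B(0,R)$, whose boundary agrees with $Y_i$ near $Z_i\cap\bdS$ but is flat (pieces of tangent hyperplanes) elsewhere. The paper instead pushes $\overline{Y}_i$ inward by a small parallel translate $\overline{U}_r$, takes $W=\mathrm{conv}\bigl(\overline{U}_r\cup\overline{S\smallsetminus(\cup_{0\le t<r}U_t)}\bigr)$, and sets $K=W+\bar B(0,r)$. The Minkowski sum with a ball forces the radii of curvature of $\partial K$ to be bounded below by $r$, which is exactly what Ghomi's Lemmas~3.1--3.2 need to guarantee that the Schneider transform of the support function has positive definite Hessian on all of $\mathbb{S}^{n-1}\smallsetminus U'$. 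With your $K$ this curvature lower bound fails on the flat pieces, so you would have to invoke Ghomi's full Proposition~3.3 (designed for polytopes, hence tolerant of flat faces) rather than just the ingredients the paper extracts; and you would still need to verify separately, as the paper does in its Claims~D and~E(iv)--(v), that after smoothing the body still contains $S$ and touches $S$ only along $Z_i\cap\bdS$. Your outline of this last point (take $R$ large, use that the rounded portion stays away from $S$, use the strict contact condition) is the right idea but is where the real bookkeeping lies. The paper's choice of $K$ buys a more self-contained argument; your choice is conceptually simpler but leans more heavily on Ghomi's result as a black box.
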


\subsection{Background from \cite{HN1}}

First we review some results of \cite{HN1}
with slight modification of notation used
in the original version.
For a smooth function $f(x)$, the set
$\{x\in\re^n:\, f(x) \geq 0\}$
is called {\it poscurv-convex} if it
is compact convex, and its boundary $\pt T$ equals
$Z(f)=\{x\in\re^n:\, f(x) = 0\}$ which
is smooth ($\nabla f(x)$ does not vanish on $\pt T$)
and positively curved at every point
$u \in Z(f)$.
When $f(x)$ is restricted to be a polynomial,
the set
$\{x\in\re^n:\, f(x) \geq 0\}$
is said to be {\it sos-convex}
if $f(x)$ is sos-concave.

\medskip

\begin{theorem}  (Theorem~3 \cite{HN1})
\label{thm:setSimple}
Given polynomials $g_i$,
suppose
$S =\{x\in \re^n:\, g_1(x)\geq 0, \cdots, g_m(x)\geq 0\}$
is compact convex and has nonempty interior.
If each $S_i: =\{x\in \re^n:\, g_i(x)\geq 0 \}$
is either sos-convex or poscurv-convex,
then $S$ is SDP representable.
\end{theorem}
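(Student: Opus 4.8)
The plan is to deduce Theorem~\ref{thm:bcsaweak} from Theorems~3 and~4 of \cite{HN1} (Theorem~3 is Theorem~\ref{thm:setSimple} above; Theorem~4 is its companion which in addition admits \emph{extendable poscurv-convex} sets, that is, sets $S_i$ for which there is a poscurv-convex set $T_i$ with $S\subseteq T_i$ and $\pt T_i\cap S=Z_i\cap S$). To this end I would verify that for each $i$ the set $S_i=\{x\in\re^n:\,g_i(x)\ge 0\}$ is \emph{either} sos-convex \emph{or} extendable poscurv-convex with respect to $S$. Split the indices into $A=\{i:\,g_i\ \text{is sos-concave}\}$ and $B=\{1,\dots,m\}\setminus A$. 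For $i\in A$ there is nothing to do, since $S_i$ is sos-convex by definition. The whole content of the argument is the construction, for $i\in B$, of the extending body $T_i$, and this is where the proof of Ghomi's smoothing proposition \cite{Ghomi} enters.

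Fix $i\in B$. Since $g_i>0$ on the interior of $S$ and $\nabla g_i\ne 0$ on $Z_i\cap S$, the set $Z_i\cap S$ equals $Z_i\cap\bdrS$ and is a compact subset of a smooth hypersurface, on which $Z_i$ has positive curvature. Let $Y_i\subset Z_i$ be the relatively open set furnished by the strictly-convex-shape hypothesis. Shrinking $Y_i$ to a smaller relatively open neighbourhood of $Z_i\cap\bdrS$ with $\overline{Y}_i$ compact, I may assume --- using continuity of $\nabla g_i$ and $\nabla^2 g_i$ (so that positive curvature persists), and the fact that a subset of a set with strictly convex shape again has strictly convex shape --- that on $\overline{Y}_i$ the hypersurface $Z_i$ is smooth, is positively curved, and satisfies $T_p(Z_i)\cap(S\cup\overline{Y}_i)\subseteq\{p\}$ for every $p\in\overline{Y}_i$. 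Thus $\overline{Y}_i$ is a compact, smooth, positively curved, strictly convex hypersurface-with-boundary in the sense of \cite{Ghomi}. Carrying out the construction in the proof of Ghomi's smoothing proposition, I extend $\overline{Y}_i$ across its relative boundary --- on the side of the support planes $T_p(Z_i)$ opposite to $S$ --- to a closed, smooth, strictly convex (hence positively curved) hypersurface $\pt T_i$ bounding a compact convex body $T_i$. Since $S$ lies strictly to one side of every $T_p(Z_i)$ with $p\in\overline{Y}_i$, one gets $S\subseteq T_i$; since $T_p(Z_i)\cap(S\cup\overline{Y}_i)\subseteq\{p\}$ and $\pt T_i$ coincides with $Z_i$ over $Y_i$, any point of $S$ lying on $\pt T_i$ must lie in $Z_i\cap S$, whence $\pt T_i\cap S=Z_i\cap S$. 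Therefore $S_i$ is extendable poscurv-convex with respect to $S$, witnessed by $T_i$.

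Once every $S_i$ is known to be sos-convex ($i\in A$) or extendable poscurv-convex with respect to $S$ ($i\in B$), Theorems~3 and~4 of \cite{HN1} apply and give that $S$ is SDP representable. The concluding assertion about the Lasserre--Parrilo construction is read off from the proofs of those theorems: they produce the lifted LMI by running the moment construction (5.4),(5.6) of \cite{HN1} on a suitable finite family of polynomials --- for $i\in A$ the sos-concave polynomial $g_i$ itself, and for $i\in B$ a polynomial $\tilde g_i$ whose superlevel set is a poscurv-convex set playing the role of $T_i$ near $\bdrS$, obtained from $g_i$ and the geometry of $T_i$ (for instance as a product of $g_i$ with a suitable positive polynomial together with a confining term, along the lines of the auxiliary constructions in \cite{HN1}). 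Assembling this family gives the asserted set of defining polynomials for $S$ on which the moment relaxation is exact.

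I expect the main obstacle to be the geometric step of the second paragraph: confirming that the strictly-convex-shape hypothesis genuinely allows Ghomi's extension to yield a \emph{globally} compact convex body $T_i$ that contains all of $S$ and touches it in exactly $Z_i\cap\bdrS$. The nonsingularity and positive-curvature hypotheses control only the local picture --- a smooth, strictly convex graph in a neighbourhood of $Z_i\cap\bdrS$ --- and the passage from this to a single closed positively curved hypersurface is precisely what the proof of Ghomi's proposition must furnish. A secondary but real difficulty is the algebraic point of the third paragraph: exhibiting the single polynomial $\tilde g_i$ with poscurv-convex superlevel set that still meets $S$ precisely along $Z_i\cap\bdrS$, so that the moment construction itself --- and not merely an abstract localization argument --- produces the representation.
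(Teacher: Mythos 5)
You have misidentified what you were asked to prove. The statement in question is Theorem~\ref{thm:setSimple}, which the paper reproduces verbatim as ``Theorem~3 of \cite{HN1}'' and cites \emph{without proof}: it is background imported from the companion paper, and the present paper contains no argument for it. Your proposal opens by announcing a plan ``to deduce Theorem~\ref{thm:bcsaweak} from Theorems~3 and~4 of \cite{HN1},'' explicitly treating Theorem~\ref{thm:setSimple} (and its companion, Theorem~\ref{thm:posCurv}) as black boxes. So you are proving a different, later theorem, and you use the very statement you were assigned as one of your hypotheses. Read as a proof of Theorem~\ref{thm:setSimple}, the argument is circular; read as a proof of Theorem~\ref{thm:bcsaweak}, it does not address the assignment. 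A genuine proof of Theorem~\ref{thm:setSimple} would have to establish, from scratch, the exactness of the Lasserre--Parrilo type moment relaxation for a compact convex $S$ cut out by inequalities whose superlevel sets are sos-convex or poscurv-convex --- in particular it would need the key technical SOS representation of nonnegative linear functionals used in \cite{HN1} --- none of which appears in your write-up.

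For what it is worth, your sketch of Theorem~\ref{thm:bcsaweak} does mirror the paper's own route: construct the extending body $T_i$ by Ghomi's smoothing argument (the content of Lemma~\ref{lem:Sextendable} and its Claims~A--E), then invoke Theorem~\ref{thm:posCurv}. But that is a comparison against the proof of a different statement than the one you were given.
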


We now turn to more general cases.
Recall that $Z_i =\{x\in \re^n:\, g_i(x) = 0 \}$.
We say $S_i =\{x\in \re^n:\, g_i(x)\geq 0 \}$
is {\it extendable poscurv-convex with respect to $S$}
if $g_i(x)>0$ whenever $x$ lies in the interior of $S_i$ and
there exists a
poscurv-convex set $T_i=\{x: f_i(x) \geq 0\} \supseteq S$
such that $\pt T_i\cap S = \bdS_i \cap S$.
In other words, $\zerS_i\cap \bdrS$ can be extended to
become the boundary of a  poscurv-convex set.
Note that the condition of extendable poscurv-convexity of $S_i$
requires $Z_i$ does not intersect the interior of $S$.

\begin{theorem} (Theorem~4 \cite{HN1})
 \label{thm:posCurv}
Given polynomials $g_i$,
suppose $S =\{x\in \re^n:\, g_1(x)\geq 0, \cdots, g_m(x)\geq 0\}$
is compact convex and has nonempty interior.
If each $S_i$ is
either sos-convex or extendable poscurv-convex with respect to
$S$, then $S$ is SDP representable.
\end{theorem}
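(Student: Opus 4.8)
The plan is to reduce Theorem~\ref{thm:posCurv} to Theorem~\ref{thm:setSimple} by replacing the given defining polynomials of $S$ by a \emph{new} finite family for which every associated super-level set is sos-convex or poscurv-convex. Discard up front any vacuous constraint (those with $g_i$ a positive constant, or $g_i\equiv 0$). For each remaining index $i$ with $S_i$ sos-convex, keep $g_i$; for each index $i$ with $S_i$ extendable poscurv-convex with respect to $S$, discard $g_i$ and use instead the polynomial $f_i$ supplied by the definition, so that $T_i=\{x:f_i(x)\geq 0\}\supseteq S$ is poscurv-convex and $\pt T_i\cap S=\bdS_i\cap S$. Let $S'$ be the intersection of the $S_i$ over the sos-convex indices and of the $T_i$ over the extendable indices. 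Once I show $S'=S$, the set $S'$ is compact convex with nonempty interior, expressed by a defining family each of whose super-level sets is sos-convex or poscurv-convex, so Theorem~\ref{thm:setSimple} applies verbatim and yields the SDP representation; no new LMI construction is then required.

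The heart of the argument is therefore the set identity $S=S'$. The inclusion $S\subseteq S'$ is immediate since $S\subseteq S_i$ for all $i$ and $S\subseteq T_i$ for the extendable indices. For the reverse inclusion I would argue by contradiction: suppose $x\in S'\setminus S$. Fix an interior point $o$ of $S$; since $S$ has nonempty interior one has $\mathrm{int}(S)=\bigcap_i\mathrm{int}(S_i)$, and for a nonconstant concave $g_i$ one has $\mathrm{int}(S_i)=\{g_i>0\}$, so in particular $g_i(o)>0$ for every sos-convex index. Let $p$ be the point at which the segment $[o,x]$ last leaves $S$; then $p\in\bdS$, and $p$ lies strictly between $o$ and $x$ because $o\in\mathrm{int}(S)$ and $x\notin S$. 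Some constraint is active at $p$, i.e. $p\in\pt S_{i_0}$ for some (non-vacuous) index $i_0$. If $i_0$ is sos-convex, then $g_{i_0}$ is concave with $g_{i_0}(o)>0$ and $g_{i_0}(x)\geq 0$ (as $x\in S'\subseteq S_{i_0}$), so concavity along $[o,x]$ forces $g_{i_0}(p)>0$, contradicting $p\in\pt S_{i_0}$. If $i_0$ is extendable, then $p\in\pt S_{i_0}\cap S=\pt T_{i_0}\cap S$, hence $p\in\pt T_{i_0}$; but $o\in\mathrm{int}(T_{i_0})$ (a ball about $o$ lies in $S\subseteq T_{i_0}$), $x\in S'\subseteq T_{i_0}$, and $T_{i_0}$ is convex, so the relatively open segment $(o,x)$ — which contains $p$ — lies in $\mathrm{int}(T_{i_0})$, again a contradiction. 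Hence $S'\subseteq S$, so $S'=S$, and Theorem~\ref{thm:setSimple} finishes the proof.

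The ingredients that are genuinely routine are: the identities $\mathrm{int}(S)=\bigcap_i\mathrm{int}(S_i)$ and $\mathrm{int}(S_i)=\{g_i>0\}$ for nonconstant concave $g_i$; the existence of the strict ``last exit'' point $p\in\bdS$ on $[o,x]$; and the elementary convexity fact that a segment from an interior point of a convex set to any point of the set lies, except possibly at the far endpoint, in the interior. The main obstacle — and the only place the hypotheses are truly used — is the extendable case of the contradiction: it is precisely the combination $T_i\supseteq S$ together with $\pt T_i\cap S=\bdS_i\cap S$ that forces the active boundary piece of $S_i$ through $p$ to lie on the globally smooth, positively curved hypersurface $\pt T_i$, which is what lets $f_i$ (rather than $g_i$) serve as a defining polynomial of poscurv-convex type. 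Care is also needed in identifying ``active at $p$'' with $p\in\pt S_{i_0}$ for a genuine constraint, which is why vacuous $g_i$ must be removed at the outset; after that, invoking Theorem~\ref{thm:setSimple} is automatic.
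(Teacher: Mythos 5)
This paper only quotes Theorem~\ref{thm:posCurv} from \cite{HN1} and contains no proof of it, so strictly there is nothing in-paper to compare against; your proposal is therefore a fresh proof. Your reduction---replace each extendable $S_i$ by its poscurv-convex extension $T_i$, prove $S=\bigcap_{\mathrm{sos}} S_i\,\cap\,\bigcap_{\mathrm{ext}} T_i$, then invoke Theorem~\ref{thm:setSimple}---is exactly the mechanism the definition of extendable poscurv-convexity is designed to enable (the paper itself remarks that the \cite{HN1} proofs ``provide a new set of defining polynomials for $S$''), and your proof of the set identity is correct: the last-exit point $p\in\pt S_{i_0}\cap S$, the concavity contradiction using $g_{i_0}(o)>0$, $g_{i_0}(x)\geq 0$, $p$ strictly interior to $[o,x]$ in the sos-concave case, and the interior-segment contradiction via $\pt T_{i_0}\cap S=\pt S_{i_0}\cap S$ together with $o\in\mathrm{int}(T_{i_0})$ in the extendable case, all go through. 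The one point you should state explicitly rather than leave implicit is that Theorem~\ref{thm:setSimple} requires \emph{polynomial} defining functions: the definition of poscurv-convex quoted in this paper nominally permits merely smooth $f_i$, so you are implicitly using the \cite{HN1} convention that the extension $T_i$ is cut out by a polynomial. With that reading (which is the intended one), the argument is complete.
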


\medskip

We re-emphasize that the proofs of these theorems in \cite{HN1}
provide a new set of defining
polynomials for $S$ (possibly bigger than the original set)
for which the Lasserre-Parrilo type moment constructions
(5.4) and (5.6) given in \cite{HN1} also produce SDP representations of $S$.




Comparing Theorems~\ref{thm:posCurv} and \ref{thm:bcsaweak},
we can see that
Theorem~\ref{thm:posCurv} implies Theorem~\ref{thm:bcsaweak}
if we can show
{\it $S_i$ is extendable poscurv-convex with respect to $S$
provided $Z_i$ has positive curvature on $S$}.
The main task of this section is to
prove this point and what is new to  the proof
is mostly in the facts about convex sets which we now
turn to.

\subsection{Smoothing boundaries of convex sets}

We begin with some notations.
Let $T_p(M)$ denote  the tangent plane at $p$ to a smooth hypersurface $M$
without boundary. Sometimes we need
the tangent plane on a hypersurface $\oM$ with boundary,
but this will not be a problem for us, because
$\oM $ encountered in this section will be always contained
in another smooth hypersurface $\tM$ without boundary.
In this case,  we still use the notation
$T_p(\oM)$ rather than $T_p(\tM)$.
For a point $ p \in \bbR^n$
and a set $B \subset \bbR^n$, define the distance
\[
dist(p, B) = \inf \{ \|p-b\|_2 : \ \   \ b \in B \}.
\]

For convex set $S$,
the set $Z_i = \{x\in\re^n: g_i(x)=0\}$ is a hypersurface in $\re^n$
and is smooth in a relatively open subset containing
$Z_i \cap \bdrS = Z_i \cap S$ by the nonsingularity of $Z_i \cap \bdrS$.
Suppose $U\subset Z_i$ is relatively open and $Z_i \cap \bdrS \subset U$.
Let $\nu:\, \overline{U} \to\mathbb{S}^{n-1}$ be the Gauss map,
the map given by the unit outward normal.
We determine the outward normal direction as follows.
The smooth positively curved hypersurface $Z_i \cap \bdrS$
 has at each point $p$ a unique direction $\pm \nu(p)$
 perpendicular to its tangent plane.
 The convex set $S$ lies in one side of the tangent planes of $\partial S \cap Z_i$.
 We select the  $+ \nu(p)$ for
 $p\in \partial S \cap Z_i$ to be pointed away from $S$ and
 call this the outward direction.
 The outward direction is uniquely determined by the continuity of $\nu(p)$ on $\overline{U}$.
 Under this determination of outward normal direction,
 for any $p \in \overline{U}$, we say a set $G$ lies to the inside (resp. outside) of
 the tangent plane  $T_p(\overline{U})$ if $\langle q-p, \nu(p) \rangle \leq 0$
 (resp.  $\langle q-p, \nu(p) \rangle \geq 0$) for all $q\in G$.
 Here $\langle \cdot, \cdot \rangle$ denotes the standard
 inner product in Euclidean spaces.

%
%


The next lemma insures the extendability property
of ``pieces" of the boundary of a convex set.

\begin{lemma}
\label{lem:Sextendable}
Suppose $S$ is convex compact. Fix an index $i$.
Assume $\nabla g_i(u)$ is nonzero for every $u \in \zerS_i \cap \bdrS$,
the curvature of $\zerS_i$ is positive at all $u$ there, and
$Z_i$ does not intersect the interior of $S$.
If $Z_i \cap \bdrS$ has {\it strictly convex shape} with respect to $S$,
then $S_i$ is extendable poscurv-convex with
respect to $S$, i.e., there exists a convex set $T$ such that
\bnum
\item [(i)] The boundary $\pt T$ is nonsingular
(so is smooth) and has positive
curvature everywhere.
\item [(ii)] $T$ is compact,  $S\subset T$ and
$\zerS_i \cap \bdrS = \pt T \cap \bdrS  = \pt T \cap S$.
\enum
\end{lemma}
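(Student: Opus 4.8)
The plan is to obtain $T$ by extending the piece $\zerS_i\cap\bdrS$ of the hypersurface $\zerS_i$ to the full boundary of a convex body, reworking (the proof of) Ghomi's smoothing proposition. I first pass to a convenient compact piece of $\zerS_i$: since $\nabla g_i$ is nonzero on the compact set $\zerS_i\cap\bdrS$ and the curvature of $\zerS_i$ is positive there, by continuity $\zerS_i$ is a smooth hypersurface whose second fundamental form, relative to the outward normal $\nu$ fixed above, is positive definite on a relatively open neighbourhood of $\zerS_i\cap\bdrS$ in $\zerS_i$. Intersecting that neighbourhood with the set $Y_i$ supplied by the strictly-convex-shape hypothesis and shrinking once more, I obtain a compact smooth hypersurface-with-boundary $M_0\subset\zerS_i$ containing $\zerS_i\cap\bdrS$ in its relative interior such that $M_0$ has positive curvature at every point and $S\cup M_0$ lies strictly to the inside of every tangent plane $T_p(M_0)$, $p\in M_0$, meeting it only at $p$. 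Because $\zerS_i$ avoids the interior of $S$ we moreover have $M_0\cap S=\zerS_i\cap\bdrS$.

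It now suffices to produce a closed $C^\infty$ hypersurface $\Gamma$ of everywhere positive curvature that bounds a compact convex body $T$ with $S\subseteq T$, coincides with $\zerS_i$ on a relatively open neighbourhood $N$ of $\zerS_i\cap\bdrS$, and has $\Gamma\setminus N$ disjoint from $S$. To build it, choose $N$ inside the relative interior of $M_0$ and small enough that $M_0\setminus N$ is at positive distance from the compact set $S$ (possible since $M_0\setminus(\zerS_i\cap\bdrS)$ is disjoint from $S$), and attach to $\overline N$, along $\partial N$, a smooth positively curved ``cap'': near $\partial N$ the cap is the continuation of $M_0$ inside $\zerS_i$ (so it matches to infinite order), it is kept on the inner side of the supporting planes $T_p(M_0)$ for $p$ near the seam, and it is then interpolated, keeping curvature positive, to a large sphere enclosing $S$. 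Convexity of $T$ and the inclusion $S\subseteq T$ are forced by the construction; the cap avoids $S$ because $\partial N$ lies a fixed distance from $S$ while the tangent planes $T_p(M_0)$ near the seam strictly separate $S$, and away from a neighbourhood of $M_0$ one has complete freedom in the cap's shape with $S$ compact, so the routing around $S$ is unobstructed.

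Granting such a $\Gamma$, the two assertions follow quickly. Part (i): $\partial T=\Gamma$ is smooth and positively curved everywhere, and $T$ is compact and convex since $\Gamma$ is built as the boundary of a convex body. Part (ii): $S\subseteq T$, and if $q\in S\cap\partial T$ then $q\notin\Gamma\setminus N$, so $q\in N\subseteq\zerS_i$, hence $q\in\zerS_i\cap S=\zerS_i\cap\bdrS$; the reverse inclusion is clear since $\zerS_i\cap\bdrS\subseteq N\subseteq\partial T\cap S$. Thus $\partial T\cap S=\zerS_i\cap\bdrS$, and since $S\subseteq T$ forces $\partial T\cap S=\partial T\cap\bdrS$, this is also $\partial T\cap\bdrS$. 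Finally, $\partial S_i\subseteq\zerS_i$ and $\partial S_i=\zerS_i$ near $\zerS_i\cap\bdrS$ (as $\nabla g_i\ne0$ there), so $\partial S_i\cap S=\zerS_i\cap S=\zerS_i\cap\bdrS$, the form of the statement used in the definition of extendable poscurv-convexity.

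The step I expect to be the real work is the capping-off: extending the strictly convex hypersurface-with-boundary $M_0$ to a closed hypersurface of positive curvature everywhere --- in particular across the seam $\partial N$, where the cap's second fundamental form must agree with that of $\zerS_i$ --- while keeping the new part off $S$ and bounding a convex body containing $S$. This is Ghomi's smoothing proposition in the form needed here, and it is where the strictly-convex-shape hypothesis is indispensable: it provides the strict separation of $S$ from the tangent planes of $M_0$ that allows the cap to be routed around $S$ without meeting it.
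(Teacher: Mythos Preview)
Your outline identifies the right target and the right role for the strictly-convex-shape hypothesis, but the central step---the ``capping-off''---is not actually carried out. Saying that near $\partial N$ the cap ``is the continuation of $M_0$ inside $Z_i$'' only pushes the seam from $\partial N$ out to some larger $\partial M_0$; it does not avoid it. And ``interpolate, keeping curvature positive, to a large sphere enclosing $S$'' is exactly the hard part: producing a $C^\infty$ transition whose second fundamental form stays positive definite across the seam is not a routine matter, and a naive interpolation between two convex pieces will in general create regions of zero or negative curvature. You yourself flag this as ``the real work,'' but then defer to ``Ghomi's smoothing proposition in the form needed here'' without stating or invoking a precise result. As written, this is a plan, not a proof.

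The paper's proof (following Ghomi's Proposition~3.3) supplies a concrete construction that you should either reproduce or cite precisely. The idea is not to cap by interpolation but to build an auxiliary convex body and then smooth its support function. Concretely: take a neighbourhood $U$ of $Z_i\cap\partial S$ in $Z_i$ as you do, push it inward along the normal by a small $r>0$ to the parallel surface $\overline U_r$, form $W=\mathrm{conv}\bigl(\overline U_r\cup\overline{S\setminus(\cup_{0\le t<r}U_t)}\bigr)$, and set $K=W+\bar B(0,r)$. The Minkowski sum with $\bar B(0,r)$ does two things at once: it restores the original piece of $Z_i$ on $\partial K$ near $Z_i\cap\partial S$, and it bounds the radii of curvature of $\partial K$ from below by $r$ everywhere. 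One then smooths $K$ by applying the Schneider transform to its support function and blending back to the exact support function over the Gauss image of $U$ via a cutoff on $\mathbb S^{n-1}$; Ghomi's Lemmas~3.1--3.2 convert the curvature lower bound into positive definiteness of the Hessian of the smoothed support function, hence positive curvature of $\partial K^\varepsilon$ everywhere. The strictly-convex-shape hypothesis is used (your Claim~A analogue, the paper's Claim~C) to show that the ``rest'' of $S$ lies strictly inside the tangent planes of $\overline U_r$, so that the convex hull step does not disturb the piece of $Z_i$ you want to keep. Your verification of (i)--(ii) from a successful $\Gamma$ is fine; what is missing is the construction of $\Gamma$ itself.
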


\noindent
{\it Proof of Lemma \ref{lem:Sextendable}:}
The proof we shall give
is very similar to the proof of Proposition~3.3 in Ghomi \cite{Ghomi}.
We need construct a set $T$ satisfying the conclusions of Lemma~\ref{lem:Sextendable}.
But our construction of $T$ is slightly different from the one given in \cite{Ghomi}.
We proceed the proof by showing Claims~A,B,C,D and E.

\medskip
\noindent
{\bf Claim~A} \,
There exists a relatively open subset $U \subset Z_i$ satisfying
\bit
\item [(1)]\label{cmA:incl}
$\zerS_i \cap \bdrS \subset U$;

\item [(2)]\label{cmA:bdd}
the closure $\overline{U}$ is compact;

\item [(3)]\label{cmA:poscurv}
$U$ is smooth and
$\overline{U}$ has positive curvature everywhere;

\item [(4)]\label{cmA:eq}
$S \cap U = \bdS \cap U =\zerS_i \cap \bdrS$;

\item [(5)]\label{cmA:epty}
the relative boundary $\pt \overline{U} := \overline{U} \smallsetminus U$
satisfies $\pt \overline{U} \cap S = \emptyset$;

\item [(6)]\label{cmA:insid}
for any $p \in \oU$, the set $S \cup \overline{U}$ lies strictly to the inside of
$T_p{(\overline{U})}$, that is,
it lies to the inside of $T_p(\oU)$ and
$(S\cup \oU) \cap \big(T_p(\overline{U})\backslash \{p\} \big)=\emptyset$.

\eit

\begin{proof}
We show that the set $U= \{x\in Z_i: dist(x, Z_i \cap \bdrS)< \eps\}$
satisfies all the conditions of Claim~A when
$\eps>0$ is sufficiently small.
Items (1), (2) are obvious.
Since $\nabla g_i(x)$ does not vanish on $\bdrS \cap Z_i$,
it also does not vanish on in $U$ when $\eps>0$ is sufficiently small.
From the algebraic definition of positive curvature in \reff{def:posCurv},
we also know $\overline{U}$ has positive curvature when $\eps>0$ is small.
So item (3) is also true.

For item (4),  we know that (1) implies
\[
\zerS_i \cap \bdrS  \subset  \bdS \cap U \subset S \cap U.
\]
To prove they are all equal to each other,
it suffices to show $S \cap U \subset \zerS_i \cap \bdrS$.
For any $a\in S \cap U$, the point $a$ must belong to $Z_i \cap \pt S$,
because otherwise $Z_i$ intersects the interior of $S$,
which contradicts an assumption of Lemma~\ref{lem:Sextendable}.
So $S \cap U \subset \zerS_i \cap \bdrS$ and then (4) holds.

For item (5),  note that $\pt \overline{U}= \{x\in Z_i: dist(x, Z_i \cap \bdrS)= \eps\}$.
If $\pt \overline{U}$ intersects $S$,
then there exists $a \in \pt \overline{U} \cap S$ such that $a\in Z_i$
and $dist(a,Z_i \cap \bdrS) = \eps >0$.
Hence $a \notin \bdrS$ and $a$ must belong to the interior of $S$,
which contradicts an assumption of Lemma~\ref{lem:Sextendable}.
So (5) holds.




Item (6) is just from the condition that
$Z_i \cap S$ has strictly convex shape with respect to $S$.
\end{proof}

Fix a relatively open set $U$ satisfying Claim~A.
For any small $t$, define
\[
U_t:=\{p_t:=p-t\nu(p) | p \in U \}.
\]
By continuity, its closure is
\[
 \overline{U}_t:=\{p_t:=p-t\nu(p) | p \in \overline{U} \}.
\]
Note that $U_0 = U$ and $\overline{U}_0 = \overline{U}$.
Let $\pt \overline{U}_t$ be the relative boundary
of $\overline{U}_t$,
that is, $\pt \overline{U}_t= \overline{U}_t \backslash U_t$.
Then for  $t$ small it holds that
\[
\pt \overline{U}_t:=\{p_t:=p-t\nu(p) | \ \  p \in \pt \overline{U} \}.
\]
Clearly, $\pt \overline{U} \cap S \subseteq \pt
\overline{U} \cap (S\cap \zerS_i) = \emptyset$
as $S\cap \zerS_i \subset U$, hence
\[
dist(\pt \overline{U}, S):=\min_{p\in \pt \overline{U}} dist(p,S) > 0
\]
as both $S$ and $\pt \overline{U}$ are compact.
By $\pt \overline{U} \cap S = \emptyset$ (condition \reff{cmA:epty} of Claim~A)
and continuity of $\pt \overline{U}_t$, we have
\be
\label{eq:UtSempty}
\pt \overline{U}_t \cap S = \emptyset
\quad\, \forall \, t\in (-r, r)\
\ee
for all $r>0$ small enough.
\medskip

Now we give some elementary geometric facts about $\overline{U}$ and $\overline{U}_t$.

\medskip
\noindent
{\bf Claim~B} \, For $r>0$ sufficiently small, we have
\bit
\item [(i)]  \label{cmB: pcurv}
$U_r$ is smooth and $\overline{U}_r$ has positive curvature everywhere;

\item [(ii)] \label{cmB:insid}
$\overline{U}_r$ globally lies to the inside of the tangent plane $T_{p_r}(\overline{U}_r)$
at any $p_r \in \overline{U}_r$;


\item [(iii)] \label{cmB:normal}
$\nu(p_r) = \nu(p)$ for all $p \in \overline{U}$;

\item [(iv)] \label{cmB:dist}
for every $p \in \overline{U}$, $dist(p,\overline{U}_r) = dist(p,T_{p_r}(\overline{U}_r))=r$.

\eit

\begin{proof}
Items (i)-(ii) are the conclusions of paragraph~1
in the proof of Proposition 3.3 \cite{Ghomi}.
So we refer to \cite{Ghomi} for the proof.

(iii) This is a basic fact in differential geometry,
but we include a proof here since it is brief.
The hypersurface $Z_i$
has a relatively open smooth subset $\widetilde{U} \supset \oU$.
Similarly as before, we define
\[
\widetilde{U}_t:=\{p_t:=p-t\nu(p) | p \in \widetilde{U} \}.
\]
Fix an arbitrary point $p\in \oU \subset \widetilde{U}$.
Let $\{\phi(t): t \in \re \} \subset \widetilde{U}$ be an arbitrary smooth curve passing through $p$,
say, $\phi(0) = p$.
Since $\nu(p)$ is the normal to $\widetilde{U}$ at $p$,
we have $\langle \nu(p), \phi^\prm(0) \rangle = 0$.
Then $\{\phi(t)- r \nu(\phi(t)) : t \in \re \} \subset \widetilde{U}_r$ is
a smooth curve passing through $p_r$.
The unit length condition $\| \nu(\phi(t)) \|_2^2 = 1$ of normals implies
\[
\langle \nu(\phi(t)), \nabla_{\phi} \nu(\phi(t))  \phi^\prm(t) \rangle = 0,
\, \forall \, t.
\]
In particular,
$\langle \nu(\phi(0)), \nabla_{\phi} \nu(\phi(0))  \phi^\prm(0) \rangle = 0$.
Thus we have
\[
\left\langle \nu(p), \frac{d(\phi(t)- r \nu(\phi(t))) }{dt}\Big|_{t=0} \right\rangle =
\left\langle \nu(p), \phi^\prm(0)  \right\rangle - r
\langle \nu(\phi(0)), \nabla_{\phi}  \nu(\phi(0))  \phi^\prm(0)  \rangle = 0.
\]
So the curve $\{\phi(t)- r \nu(\phi(t)): t \in \re \}$ in $\widetilde{U}_r$ is also perpendicular to $\nu(p)$.
By uniqueness of unit normals of smooth hypersurfaces, we have $\nu(p_r) = \nu(p)$.

 (iv) For every $p \in \overline{U}$, (iii) says $\nu(p_r) = \nu(p)$.
 So the point $p$ lies to the outside of the tangent plane $T_{p_r}(\overline{U}_r)$.
 Since $p = p_r + r \nu(p_r)$ and $\nu(p_r)$ is perpendicular to $T_{p_r}(\overline{U}_r)$ at $p_r$,
 we have $r = dist(p,T_{p_r}(\overline{U}_r))$.
 From (ii), we know that
 $\overline{U}_r$ lies to the inside of the tangent plane $T_{p_r}(\overline{U}_r)$.
 So
 \[
 dist(p, \overline{U}_r) \geq dist(p,T_{p_r}(\overline{U}_r))=r.
 \]
Since $p_r= p - r \nu(p) \in U_r$, we obtain
$ dist(p, \overline{U}_r)  \leq r$.
Therefore, we have $dist(p, \overline{U}_r) = dist(p,T_{p_r}(\overline{U}_r))=r$.
\end{proof}

\medskip

\noindent
{\bf Claim~C} \,  For any $q \in \overline{U}_r$,
the set $S\smallsetminus (\cup_{0\leq t < r} U_t)$
globally lies to the inside
of $T_{q_r}({\hat U}_r)$ when $r$ is sufficiently small.
\begin{proof}
We prove this claim in three steps.

{\it Step~1}\,
From item (ii) of Claim~B we know the set $\overline{U}_s$  lies to the
inside of all the tangent planes of $\overline{U}_s$ when $s>0$ is small enough.
For every $q_t \in \overline{U}_t$, the tangent plane $T_{q_t}(\overline{U}_t)$
always lies to the inside of the  tangent plane $T_{q_s}(\overline{U}_s)$ when $0\leq s\leq t$ are both small.
This is because $q_s = q_t + (t-s) \nu(q_t)$,
since $\nu(q_s) = \nu(q_t)$ from item (iii) of Claim~B.
Hence for $\dt>0$ small enough, the set $\overline{U}_t$ lies to the
inside of all the tangent planes of $\overline{U}_s$ whenever $0\leq s\leq t\leq \dt$.

{\it Step~2} \,
Fix a $\dt>0$ sufficiently small as required in Step~1. Define the set
\[
W_{\dt} =  S \smallsetminus ( \cup_{0\leq t < \dt} U_t).
\]
For $\eta>0$ sufficiently small, it holds that
\be \label{eq:Wdt}
W_{\dt} =  S \smallsetminus ( \cup_{-\eta <  t < \dt} U_t).
\ee
This is because $U_t$ for $t\in (-\eta,0)$ lies outside of $S$,
due to item (6) of Claim~A and item (iii) of Claim~B.

Next, we show that the set  $U_{(-\eta,\dt)}:= \cup_{-\eta <  t < \dt} U_t$ is open.
For this purpose, define function
\[
\psi(p, t, z) := \bbm p - t \nu(p) - z \\ -g_i(p) \ebm, \quad \forall \,
(p,t,z) \in \re^n \times (-\eta, \dt) \times \re^n.
\]
Note that its partial Jacobian is
\[
\nabla_{(p,t) }\psi(p,t,z) =
\bbm  I_n - t \nabla_p \nu(p) &  -\nu(p) \\
 - \nabla g_i(p)^T & 0 \ebm.
\]
From the choice of outward normal direction, we know
$\nu(p) = - \frac{\nabla g_i(p)}{\| \nabla g_i(p)\|}$. So
\[
\det(\nabla_{(p,t) }\psi(p,t,z)) =  \| \nabla g_i(p)\|
\left( \nu(p)^T \left( I_n - t \nabla_p \nu(p) \right)^{-1}\nu(p) \right)
\det\left( I_n - t \nabla_p \nu(p) \right).
\]
Fix an arbitrary point $p_t= p - t\nu(p) \in U_{(-\eta,\dt)}$.
Then $\psi(p,t, p_t) = 0$ and $\|\nabla g_i(p)\| >0$
(since $U$ is smooth).
If $\eta$ and $\dt$ are sufficiently small,
it holds that $\det(\nabla_{(p,t) }\psi(p,t,p_t)) > 0$
and hence $\nabla_{(p,t) }\psi(p,t,p_t))$ is nonsingular.
By the Implicit Function Theorem,
there exist a small open neighborhood $\mc{O}_{p_t}$ of $p_t$ in $\re^n$
and a small open neighborhood $\mc{O}_{p,t}$ of $(p,t)$ in $\re^n \times (-\eta, \dt)$
such that $\psi(w,s,q)=0$ defines a smooth function
$(w,s) = \zeta(q)$ with domain $\mc{O}_{p_t}$ and range $\mc{O}_{p,t}$.
That is, for every $q \in \mc{O}_{p_t}$,
we can find a unique $(w,s)$ in $\mc{O}_{p,t}$ such that
$q = w - s \nu(w)$ and $g_i(w) = 0$.
If we choose the open neighborhoods $\mc{O}_{p_t}$ and $\mc{O}_{p,t}$ sufficiently small,
$w$ must be sufficiently close to $p$ enough so that $w\in U$
and $s\in (-\eta,\dt)$. So $q \in U_{(-\eta,\dt)}$.
This says $U_{(-\eta,\dt)}$ is an open set in $\re^n$.

Now we show that $W_{\dt}$ also lies to the inside of
the tangent planes of $U_r$ for all $r>0$ small enough,
by generalizing the argument in the proof in Proposition~3.3 in \cite{Ghomi}.
From the openness of $ \cup_{-\eta <  t < \dt} U_t$
and compactness of $S$,
we know $W_{\dt}$ is compact from \reff{eq:Wdt}.
For this purpose, define function $f_r: \overline{U}_0 \times W_{\dt} \to \re$ as
\[
f_r(p,a) = \langle a - p_r, \nu(p_r) \rangle, \quad
\forall \, (p, a) \in \overline{U}_0 \times W_{\dt},
\]
which is the signed distance between $a$ and $T_{p_r}(\overline{U}_0)$
(See \cite{Ghomi}).
By item (6) of Claim~A,
for every point $p \in \overline{U}_0 = \overline{U}$,
the convex set $S$ lies to the inside of the tangent plane $T_p(\overline{U}_0)$
and $S \cap \big(T_p(\overline{U}_0)\backslash \{p\}\big) = \emptyset$.
Since $W_{\dt} \subset S$ and $W_{\dt} \cap \overline{U}_0 = \emptyset$,
we know $W_{\dt}$ lies strictly to the inside of
the tangent plane $T_p(\overline{U}_0)$,
meaning that it does not touch $T_p(\overline{U}_0)$.
Thus $f_0<0$ on the compact set $\overline{U}_0 \times W_{\dt}$.
By continuity, we know $f_r<0$ on $\overline{U}_0 \times W_{\dt}$
for $r>0$ small enough. This means the set
$W_{\dt}$ lies strictly to the inside of all the
tangent planes of $\overline{U}_r$ for $0\leq r \leq \dt$ is sufficiently small.

{\it Step~3}\,
For $r\in [0,\dt]$ sufficiently small, one has
\[
S \smallsetminus ( \cup_{0\leq t < r} U_t)  \subset
W_{\dt} \cup   ( \cup_{r \leq t < \dt } U_t).
\]
From Step~1, we know $\cup_{r \leq t < \dt } U_t$
lies to the inside of all the tangent planes of $\overline{U}_r$.
From Step~2, we know $W_{\dt}$
lies to the inside of all the tangent planes of $\overline{U}_r$.
So we immediately conclude that
$S \smallsetminus ( \cup_{0\leq t < r} U_t)$
lies to the inside of all the tangent planes of $\overline{U}_r$.
\end{proof}

For $r>0$ small enough, define two new sets
\[
W = \mbox{conv}\Big( \overline{U}_r \cup  \overline{S \smallsetminus ( \cup_{0\leq t < r} U_t)} \Big),
\quad
K = {W} + \bar{B}(0,r).
\]

\medskip
\noindent
{\bf Claim~D}\,
 For $r>0$ small enough, the set $K$ is compact convex and
\[
\partial K \cap \bdrS = \zerS_i \cap \bdrS.
\]
\begin{proof}
Convexity and compactness are obvious. Note that
\be
\label{eq:Kdist}
\pt K= \{ b: dist(b, {W} ) = r \}.
\ee
First, we prove the inclusion $\zerS_i \cap \bdrS \subset \partial K \cap \bdrS$.
Suppose $p \in \zerS_i \cap \bdrS \subset \overline{U}$, then
\[
 dist(p, \overline{U}_r) \ \geq \ dist(p, {W} ),
\]
because $\overline{U}_r \subset {W}$.
From item (ii) of Claim~B we know the set $\overline{U}_r$
lies to the inside of the tangent plane $T_{p_r}(\overline{U}_r)$,
and from Claim~C we know $S \smallsetminus ( \cup_{0\leq t < r} U_t)$
lies to the inside of $T_{p_r}(\overline{U}_r)$.
Thus, by the definition of $W$,
the set $W$ also  lies to the inside of $T_{p_r}(\overline{U}_r)$.
Since $p$ lies to the outside of  $T_{p_r}(\overline{U}_r)$, we have
\[
  dist(p, \tanpr ) \leq \ dist(p, {W} ).
\]
Then from item (iv) of Claim~B we can see that
\[
 r = dist(p, \overline{U}_r) \ =  dist(p, \tanpr ) = \ dist(p, {W} ).
\]
So $dist(p,W)=r$ and hence $p \in \pt K \cap \bdS$ from \reff{eq:Kdist}.
Hence it holds $\zerS_i \cap \bdrS \subset \partial K \cap \bdrS$.

Second, we prove the reverse inclusion $ \partial K \cap \bdrS \subset \zerS_i \cap \bdrS$.
Start by noting that
\[
\bdrS =  (Z_i \cap \bdrS) \cup  \big(\bdrS \smallsetminus  (Z_i \cap \bdrS) \big).
 \]
We set about to prove $\bdrS \smallsetminus (Z_i \cap \bdrS)$ lies in the interior of $K$.
Consider $a \in \bdrS \smallsetminus (Z_i \cap \bdrS)$.
If $a \in  S \smallsetminus ( \cup_{0\leq t < r} U_t)$,
then $a\in W$ and hence $a+B(0,r/2)\subset K$ which implies $a$ is in the interior of $K$.
If $a \notin  S \smallsetminus ( \cup_{0\leq t < r} U_t)$,
then $a \in U_s$ for some $s \in (0,r)$
because $a\notin U_0$.
By definition of $U_s$ and $U_r$,
there exists $b\in U_r$ such that $a = b + (r-s) \nu(q)$
for some $q\in U_0$.
Since $b\in W$ and $\|a-b\| = r-s$,
we know $a + B(0, s/2) \subset b + B(0, r-s/2) \subset K$
and hence $a$ is also in the interior of $K$.
Combining the above,
we know $\bdrS \smallsetminus (Z_i \cap \bdrS)$ lies in the interior of $K$
and hence does not intersect $\pt K$.
Thus $\pt K \cap \pt S = \pt K \cap (Z_i \cap \pt S) \subset Z_i \cap \pt S$,
which completes the proof.
\end{proof}

The proof from here on is essentially the same as in
Proposition~3.3 \cite{Ghomi}, so we could refer
to that but include here a slightly annotated version
for convenience.
The next step is to define a set $K^\eps$ which
is a small perturbation of $K$ and
which we shall prove has the properties
our lemma requires.
Let $V \subset U$ be an open set with
$\zerS_i \cap \bdrS \subset V \subset U$.
Set $U^\prm = \nu(U)$, and $V^\prm = \nu(V )$.
Then
$U^\prm$ and $V^\prm$ are open in $\mathbb{S}^{n-1}$, because
(since the second fundamental form of $U$ is nondegenerate)
$\nu$ is a local diffeomorphism.
Let $\bar \phi: \mathbb{S}^{n-1}\to \re$ be a
smooth function with support $\supp(\bar \phi) \subset U^\prm$,
and $\bar \phi |_{\overline{V}^\prm} \equiv 1$.
Let $\phi$ be the
extension of $\bar\phi$ to $\re^n$ given by $\phi(0) = 0$ and
$\phi (p) := \bar\phi(p/\|p\|)$, when $p\ne 0$.
Define $\bar h :\re^n \to \re$  by
\[
\bar h^\eps (p) := \tilde h^\eps(p) + \phi(p)( h(p) - \tilde h^\eps(p)),
\]
where $h$ is the support function of $K$, that is,
\[
h(p): = \sup_{x \in K} \langle p, x \rangle
\]
and $\tilde h^\eps$ is the {\it Schneider transform} of $h$
\[
\tilde h^\eps(p): = \int_{\re^n} \, h(p+\|p\| x ) \theta_\eps(
\|x\| ) dx.
\]
Note that $\tilde h^\eps$ is a convex function
(see Ghomi \cite{Ghomi}).
Here $\theta_\eps: \, [0, \infty) \to [0, \infty)$ is a smooth
function with $\supp(\theta_\eps) \subset [\eps/2, \eps]$ and
$\int_{\re^n} \theta_\eps( \|x\| ) dx = 1$. $\bar h^\eps$ supports
the convex set
\[
K^\eps := \{ x \in \re^n : \langle x, p\rangle \leq \bar
h^\eps(p),\, \forall p \in \re^n \}.
\]

\medskip
\noindent
{\bf Claim~E} The set $T= K^\eps$ satisfies the conclusions
of Lemma~\ref{lem:Sextendable}  when $\eps>0$ is
sufficiently small.
\begin{proof}
(i) We show $K^\eps$ is a convex body with support function
$\bar h^\eps$. To see this, it suffices to check that $\bar
h^\eps$ is positively homogeneous and convex. By definition, $\bar
h^\eps$ is obviously homogeneous. Thus to see convexity, it
suffices to show that $\nabla^2 \bar h^\eps(p)$ is nonnegative
semidefinite for all $p \in \mathbb{S}^{n-1}$. Since $\bar h^\eps
|_{\mathbb{S}^{n-1} \smallsetminus U^\prm} = \tilde h^\eps$, and $\tilde h^\eps$
is convex, we need to check this only for $p \in U^\prm$. To this
end, note that, for each $p \in U^\prm$ ,
$\nabla^2( h|_{T_p \mathbb{S}^{n-1}}) \succ 0$.
Here $T_p$ denotes the tangent plane at $p$.
Further, by construction,
\[
\|  h - \bar h^\eps \|_{ C^2(\overline{U}^\prm) } \to 0.
\]
So, for every $p \in \overline{U}^\prm$ , there exists an $\eps(p) > 0$
such that $\bar h^\eps  |_{T_p \mathbb{S}^{n-1}}$ has strictly
positive Hessian.
Since $\overline{U}^\prm$ is compact and $\eps(p)$ depends on
the size of the eigenvalues of the Hessian matrix of $\bar h^\eps
|_{T_p \mathbb{S}^{n-1}}$, which in turn depend continuously on
$p$, it follows that there is an $\eps > 0$ such that
$\nabla^2(\bar h^\eps |_{T_p \mathbb{S}^{n-1}}) \succ 0$
for all $p \in \overline{U}^\prm$.

(ii) We show that $\pt K^\eps$ is
nonsingular (hence smooth)
and positively curved.
By Lemma 3.1 in Ghomi \cite{Ghomi}, we only need check
$\nabla^2(\bar h^\eps  |_{T_p \mathbb{S}^{n-1}}) \succ 0$
for all $p\in \mathbb{S}^{n-1}$. For $p \in U^\prm$,
this was verified above.
For $ p \in \mathbb{S}^{n-1} \smallsetminus U^\prm$, note that $\bar h^\eps =
\tilde h^\eps$ on the cone spanned by  $\mathbb{S}^{n-1} \smallsetminus U^\prm$.
So it is enough to check that
$\nabla^2 (\tilde h^\eps  |_{T_p \mathbb{S}^{n-1}}) \succ 0$.
By Lemmas 3.2 and 3.1 of Ghomi \cite{Ghomi}, this
follows from the boundedness of the radii of curvature from below.

(iii)  Obviously $K^\eps$ is compact.
We show that $\zerS_i \cap \bdrS \subset \pt K^\eps$.
The proof is almost the same as the one of Proposition 3.3 in \cite{Ghomi}.
Since $\zerS_i \cap  \bdrS \subset U$, which
is smooth in $\pt K$, we have $h(p) = \langle \nu^{-1}(p), p
\rangle$, for all $p\in U^\prm$.
Apply the fact
$\nabla h(p) = \nu^{-1}(p)$
to get
\[
\nu^{-1} (p) = \nabla h(p) = \nabla \bar h^\eps (p) = \bar
\nu^{-1}(p)
\]
for all $p\in V^\prm$,
where $\bar \nu$ is the Gauss map of $\pt K^\eps$
(see the proof of Proposition~3.3 in \cite{Ghomi}).
So $\zerS_i \cap \bdrS \subset
\bar \nu^{-1} (V^\prm )\subset \pt K^\eps$.

(iv) We show that $S \cap \pt K^\eps  = \bdrS \cap \pt K^\eps= Z_i \cap \bdS$.
Let $A:= \bar \nu^{-1} (V^\prm )$. Then $A\subset \pt K^\eps$,
as shown in (iii) above.
Since the Gauss map is continuous, $A$ is a relatively open subset of $V$.
Obviously $A \subset  U \subset \pt K$.
So the sets $\pt K^\eps\backslash A$, $\pt K\backslash A$
are all compact.
The set $S\backslash(\bdS\cap\zerS_i)$ is contained in the interior of $K$
(this has been proved in the proof of Claim~D),
so $S\cap \pt K = (\bdS\cap\zerS_i) \cap \pt K$.
From (iii) above, we know $\zerS_i \cap \bdrS \subset A$
and hence
$S \cap (\pt K\backslash A) = \emptyset$.
So it holds
\be \label{eq:SptKe}
\bdS \cap Z_i \subset A \subset \pt K^\eps.
\ee
Since $K^\eps \to K$ as $\eps \to 0$, it must hold that
$\pt K^\eps\backslash A \to \pt K \backslash A$
as $\eps \to 0$.
Thus, for $\eps >0$ small enough, we have
$S \cap (\pt K^\eps\backslash A)  = \emptyset$,
which implies (by using \reff{eq:SptKe})
\[
S \cap \pt K^\eps = (S \cap (\pt K^\eps \backslash A)) \cup (S \cap A) = S \cap A.
\]
Then we can see
\[
\bdS \cap Z_i 
\subset \bdS \cap \pt K^\eps \subset
S \cap \pt K^\eps = S \cap A  \subset S \cap U = \bdS \cap Z_i,
\]
where the last equality is by item (4) of Claim~A.
So all the intersections above are the same and hence we get
$S \cap \pt K^\eps  = \bdrS \cap \pt K^\eps= Z_i \cap \bdS$.

(v) We show that $S\subset K^\eps$.
Let $A$ be the relatively open subset of $V$ defined above.
Fix an interior point $v\in W \subset S$.
We proceed by contradiction.
If $S \not\subset K^\eps$, then the interior of $S$ is not contained
in the interior of $K^\eps$ since they are both compact.
So we can find an interior point $u \in S$ but $ u\notin K^\eps$.
Since $S$ and $K^\eps$ are convex, the line segment $L$ connecting $u$ and $v$
must be contained in $S$ and intersect $\pt K^\eps$, say, $b \in L \cap \pt K^\eps$.
Since $u,v$ are both in the interior of $S$,
$b$ must also be an interior point of $S$.
Thus $b\not\in \bdS \cap Z_i$.
We also must have $b \notin A$, because $S \cap A = \bdS \cap Z_i$.
So $b \in \pt K^\eps\backslash A$.
Since $b\in L \subset S$, we get
$b \in S \cap (\pt K^\eps\backslash A)$,
which is a contradiction since $S \cap (\pt K^\eps\backslash A) = \emptyset$,
as shown in (iv) above.
Therefore $S$ must be contained in $K^\eps$ for $\eps>0$ sufficiently small.
\end{proof}

Now that Claim~E is proved,
the proof of Lemma~\ref{lem:Sextendable} is finished.
\qed

\subsection{Proof of Theorem \ref{thm:bcsaweak}}

Given $u \in \bdrS$,
pick a $g_i$ for which $g_i(u)=0$.
By assumption, if $g_i$ is not sos-concave, then
each $\zerS_i$ has positive curvature at all $u$
in $\zerS_i \cap \bdrS $ and $\nabla g_i(u) \not =0$.
By Lemma \ref{lem:Sextendable},
$S_i$ is extendable poscurve-convex with respect to $S$.
Apply Theorem \ref{thm:posCurv}, noting that they
produce the desired Lasserre-Parrilo type moment
construction, to finish the proof.
\qed

\bigskip
\section{Conclusions}
\label{sec:concl}
For compact convex semialgebraic sets,
this paper proves the sufficient condition
for semidefinite representability:
each component of the boundary is nonsingular and has positive curvature,
and the necessary condition: the boundary components
have nonnegative curvature
when nonsingular.
We can see that the only gaps between them are
the boundary has singular points or
has zero curvature somewhere.
Compactness is required in the proof for the sufficient condition,
but it is not clear whether the compactness is necessary
in the general case.
So far, there is no
evidence that
SDP representable sets require more
than being convex and semialgebraic.
In fact, we conjecture that

\bigskip
\centerline{ \it Every convex semialgebraic set in $\re^n$
is semidefinite representable. }
\bigskip

The results of this paper are mostly on the
theoretical existence of semidefintie representations.
One important and interesting future work is to find concrete conditions
guaranteeing efficient and practical constructions of lifted LMIs
for convex semialgebraic sets and convex hulls
of nonconvex semialgebraic sets.

\bigskip
\bigskip
\noindent
{\bf Acknowledgement}:
J. William Helton is partially supported by the
NSF through DMS 0700758, DMS 0757212 and the Ford Motor Company.
Jiawang Nie is partially supported by the
NSF through DMS 0757212.
The authors thank M. Schweighofer for numerous
helpful suggestions in improving the manuscript.

%

\end{document}